\DeclareFontFamily{U}{mathx}{\hyphenchar\font45}
\DeclareFontShape{U}{mathx}{m}{n}{
	<5> <6> <7> <8> <9> <10>
	<10.95> <12> <14.4> <17.28> <20.74> <24.88>
	mathx10
}{}
\DeclareSymbolFont{mathx}{U}{mathx}{m}{n}
\DeclareMathAccent{\widecheck}{0}{mathx}{"71}
\DeclareMathAccent{\wideparen}{0}{mathx}{"75}
\providecommand{\keywords}[1]
{
	\small	
	\textbf{Keywords ---} #1
}
\newcommand{\R}{\mathds{R}}
\newcommand{\Pw}{\mathds{P}}
\newcommand{\Z}{\mathds{Z}}
\newcommand{\E}{\mathds{E}}
\newcommand{\N}{\mathds{N}}
\newcommand{\cB}{\mathcal{B}}
\newcommand{\cC}{\mathcal{C}}
\newcommand{\cE}{\mathcal{E}}
\newcommand{\cI}{\mathcal{I}}
\newcommand{\cF}{\mathcal{F}}
\newcommand{\cM}{\mathcal{M}}
\newcommand{\cP}{\mathcal{P}}
\newcommand{\cT}{\mathcal{T}}
\newcommand{\bfB}{\mathbf{B}}
\newcommand{\bfC}{\mathbf{C}}
\newcommand{\bfX}{\mathbf{X}}
\newcommand{\1}{\mathds{1}}
\theoremstyle{definition}
\newtheorem{definition}{Definition}[section]
\newtheorem{remark}[definition]{Remark}
\newtheorem{openproblem}{Open problem}
\newtheorem{conjecture}[openproblem]{Conjecture}
\theoremstyle{plain}
\newtheorem{theorem}[definition]{Theorem}
\newtheorem{lemma}[definition]{Lemma}
\newtheorem{corollary}[definition]{Corollary}
\newtheorem{proposition}[definition]{Proposition}
\renewenvironment{proof}[1][\proofname]{%
	\par\pushQED{\qed}\normalfont%
	\topsep6\p@\@plus6\p@\relax
	\trivlist\item[\hskip\labelsep\bfseries#1\@addpunct{.}]%
	\ignorespaces
}{%
	\popQED\endtrivlist\@endpefalse
}
\begin{document}
	\title{Contact process on a dynamical long range percolation}
	\author{M. Seiler\footnote{Frankfurt Institute for Advanced Studies, Ruth-Moufang-Straße 1, 60438 Frankfurt am Main, Germany	E-mail: seiler@fias.uni-frankfurt.de}  \and  A. Sturm\footnote{Institute for Mathematical Stochastics, Georg-August-Universit\"at G\"ottingen, Goldschmidtstr. 7, 37077
			G\"ottingen, Germany E-mail: anja.sturm@mathematik.uni-goettingen.de}}
	\date{\today}
	\maketitle
	\begin{abstract}
			\small{In this paper we introduce a contact process on a dynamical long range percolation (CPDLP) defined on a complete graph $(V,\cE)$. A dynamical long range percolation is a Feller process defined on the edge set $\cE$, which assigns to each edge the state of being open or closed independently. The state of an edge $e$ is updated at rate $v_e$ and is open after the update with probability $p_e$ and closed otherwise. The contact process is then defined on top of this evolving random environment using only open edges for infection while recovery is independent of the background. First, we conclude that an upper invariant law exists and that the phase transitions of survival and non-triviality of the upper invariant coincide. We then formulate a comparison with a contact process with a specific infection kernel which acts as a lower bound. Thus, we obtain an upper bound for the critical infection rate. We also show that if the probability that an edge is open is low for all edges then the CPDLP enters an immunization phase, i.e.\ it will not survive regardless of the value of the infection rate. Furthermore, we show that on $V=\Z$ and under suitable conditions on the rates of the dynamical long range percolation the CPDLP will almost surely die out if the update speed converges to zero for any given infection rate $\lambda$.} 
	\end{abstract}
	\keywords{Contact process, evolving random environment, dynamical random graphs, interacting particle systems, long range percolation}
	\section{Introduction}
The classical contact process on a fixed graph  describes the spread of an infection over time and space. It has been studied intensively and many variations have been considered, see Section \ref{sec:relatedlit} for more background.
In this article we study a \emph{contact processes on a dynamical long range percolation} (CPDLP), in which infections over any distance are possible depending on whether the corresponding edge is present, which also changes dynamically. 

We assume that the underlying graph $G=(V,E)$ is a connected and transitive graph with bounded degree. The graph distance of $G$ is denoted by $d(\cdot,\cdot)$ and  the set of all possible edges by $\cE:=\{e=\{x,y\}: x,y\in V, x\neq y \}$. From now on we consider the complete graph $(V,\cE)$ which we also equip with the original graph distance $d(\cdot,\cdot)$.

There are several notions of transitivity for graphs in the literature. Thus, we specify the notion briefly. Here a graph $G$ is called transitive if for any pair of vertices $x_1,x_2\in V$ and respectively for any pair of edges $e_1,e_2\in E$, there exists a graph automorphism $\phi$ which maps $x_1$ to $x_2$ and $e_1$ to $e_2$.
A graph automorphism is a permutation on $V$ which preserves the graph structure, i.e.\ $\{x,y\} \in E$ iff $\{\phi(x),\phi(y)\} \in E$. In the literature this is sometimes called vertex and edge transitivity.

The CPDLP $(\bfC,\bfB)=(\bfC_t,\bfB_t)_{t\geq 0}$ is a Markov process on $\cP(V)\times \cP(\cE)$, where $\cP(V)$ and $\cP(\cE)$ denote the power sets of $V$ and $\cE$. We equip the space $\cP(V)\times \cP(\cE)$ with the topology induced by pointwise convergence, i.e. $(C_n, B_n) \in \cP(V)\times \cP(\cE)$ converges to $(C,B)$ if $\1_{\{(x,e) \in (C_n, B_n)  \}} \rightarrow \1_{\{(x,e) \in (C,B)  \}}$ for all $(x,e) \in \cP(V)\times \cP(\cE).$  Note that $\cP(V)\times \cP(\cE)$ is a partially ordered space with respect to $``\subset"$. Furthermore, we denote by $``\Rightarrow"$ weak convergence of probability measures on $\cP(V)\times \cP(\cE)$. As usual we denote by $|A|$ the cardinality of a set $A$.

We call $\bfC$ the \emph{infection process}, which takes values in $\cP(V)$. If $x\in\bfC_{t}$ then we call $x$ \textit{infected} at time $t$. The process $\bfB$ describes an evolving edge random environment and takes values in $\cP(\cE)$. Thus, we call $\bfB$ the \emph{background process}. If $e\in \bfB_t$ we call $e$ \emph{open} at time $t$ and \emph{closed} otherwise. Furthermore, we assume that $\bfB$ evolves autonomously of $\bfC$. Given that $\bfB$ is currently in state $B$ the transitions of the infection process $\bfC$ currently in state $C$ are for all $x\in V$, 
\begin{equation}\label{InfectionRatesWithBackground}
	\begin{aligned}
		C&\to C\cup \{x\}	\quad \text{ at rate } \lambda \cdot|\{y\in C:\{x,y\}\in 
		B\}| \text{ and }\\
		C&\to C\backslash \{x\}	\quad\; \; \text{ at rate } r,
	\end{aligned}
\end{equation}
where $\lambda>0$ denotes the \emph{infection rate} and $r>0$ the \emph{recovery rate}. 
We write $\bfC=\bfC^C$ when $\bfC_0=C.$
For the background dynamics  we consider a \emph{dynamical long range percolation}. Let $(\hat{p}_e)_{e\in \cE}\subset[0,1]$ and $(\hat{v}_e)_{e\in \cE}\subset(0,\infty)$ be sequences of real numbers such that $\hat{p}_{\{x,y\}}=\hat{p}_{\{x',y'\}}$ and $\hat{v}_{\{x,y\}}=\hat{v}_{\{x',y'\}}$ if $d(x,y)=d(x',y')$. We exclude the trivial case that $\hat{p}_e=0$ for all $e\in \cE.$
Now the dynamical long range percolation $\bfB$ currently in state $B$ has transitions 
\begin{equation}\label{DymLongRangePer}
	\begin{aligned}
		B&\to B\cup\{e\} \quad \text{ at rate } \hat{v}_{e}\hat{p}_{e}\text{ and }\\
		B&\to B\backslash\{e\}	\quad \text{ at rate } \hat{v}_{e}(1-\hat{p}_{e})
	\end{aligned}
\end{equation}
for all $e\in \cE$. As initial distribution we choose $\bfB_0\sim \pi$, where $\pi$ is the invariant law of $\bfB$ which means that the events $(\{e\in \bfB_t\})_{e\in \cE}$ are independent and $\Pw(e\in \bfB_t)=\hat{p}_{e}$ for all $e\in \cE$ and $t\geq 0$.

We will in particular be interested in the behavior of our process when we scale the percolation and speed kernels. For this we will assume that they are of the form 
\begin{equation}
	\label{scaledkernels}
	\hat{p}_{e}=\hat{p}_{e}(q):=q p_{e}\quad \text{ and } \quad \hat{v}_{e}=\hat{v}_{e}(\gamma):=\gamma v_{e}
\end{equation}
for all $e\in \cE$ for some $\gamma>0$ and $q\in(0,1]$ and fixed kernels $(p_e)_{e\in \cE}\subset[0,1]$ and $(v_e)_{e\in \cE}\subset(0,\infty)$. A long range percolation model assigns to every edge $e\in\cE$ independently the state of being open with probability $\hat{p}_e$ and otherwise closed. The term "dynamical" means that we update the state of every edge $e$ as time evolves. This is done independently for every edge $e=\{x,y\}$ at update speed $\hat{v}_e$ which depends only on the length $d(x,y)$ of that edge.
This yields a translation invariant background dynamic, where we use that the graph $G$ is transitive.

Since we are in a long range setting we need some assumptions regarding the flip rates of the background process to ensure that the CPDLP is well-defined. 

In order to ensure that the transition rates of the infection process are not infinite we need  that at any given time $t$ the neighborhood of any vertex $x$ remains finite. Therefore, we assume that the sequence $(\hat{p}_{e})_{e\in \cE}$ and $(\hat{v}_{e})_{e\in \cE}$ satisfy 
\begin{equation}\label{ExistenceAssumption}
	\sum_{y\in V\backslash\{x\}}\hat{v}_{\{x,y\}}\hat{p}_{\{x,y\}}<\infty \,\,\, \text{ and }\,\,
	\sum_{y\in V\backslash\{x\}}\hat{v}_{\{x,y\}}^{-1}<\infty\,\,\, \text{ for all } x\in V.
\end{equation}
Note that if the kernels are of the form (\ref{scaledkernels}) then $(p_e)_{e\in \cE}\subset[0,1]$ and $(v_e)_{e\in \cE}\subset(0,\infty)$ satisfy these assumptions iff $(\hat{p}_e)_{e\in \cE}\subset[0,1]$ and $(\hat{v}_e)_{e\in \cE}\subset(0,\infty)$ do.
\begin{remark}\label{Rem:pIsSummable}
	The assumptions in (\ref{ExistenceAssumption})  imply that $\hat{v}_{\{x,y\}}\hat{p}_{\{x,y\}}\to 0$ and $\hat{v}_{\{x,y\}}\to \infty$ as $d(x,y)\to \infty$. Since we also have that $\hat{v}_e>0$ for all $e\in \cE$ it follows that $C:=\inf_{y:y\neq x}\hat{v}_{\{x,y\}}>0$, where $C$ does not depend on the choice of $x\in V$ due to translation invariance. This implies together with \eqref{ExistenceAssumption} that
	\begin{equation*}
		0<C\sum_{y\in V\backslash\{x\}}\hat{p}_{\{x,y\}}\leq \sum_{y\in V\backslash\{x\}}\hat{v}_{\{x,y\}}\hat{p}_{\{x,y\}}<\infty,
	\end{equation*}
	i.e.\ that the sequence $(\hat{p}_{\{x,y\}})_{y\in V\backslash \{x\}}$ is summable.
\end{remark}
Thus, as a consequence of the assumptions in (\ref{ExistenceAssumption}) the probability that a long edge is open, i.e.~an edge connecting two vertices over a long distance, becomes exceedingly small. Broadly speaking this means that a successful infection over a long distance is getting more and more unlikely as the distance increases. The second part of the assumption can be seen as assuming that all edges attached to an arbitrary vertex are updated after a finite time. This might seem a bit unintuitive, but we need this assumption for technical reasons. We discuss the necessity of this rate assumption briefly in Section~\ref{Discussion&OpenProblems} right after Problem~\ref{Prob:SurvivalForConnectedDymGraphs}.

In Section \ref{GraphRepCPDLP}  we will explicitly construct the CPDLP via a graphical representation and then show that under these assumptions the resulting process is in fact a well-defined Feller process (see Proposition \ref{propn:Feller}) with state space $\cP(V)\times \cP(\cE)$ and that $|\bfC_t|<\infty$ almost surely for all $t\geq 0$ if $|\bfC_0|<\infty$, even if the background is started in $\cE$ (see Proposition \ref{propn:finiteinfectedsets2}).

\smallskip
We are interested in the survival behavior of the CPDLP as the parameter $\lambda$ varies, and later on also as $\gamma>0$ and $q\in(0,1)$ vary for percolation and speed kernels of the form (\ref{scaledkernels}). In the general setting, we denote by
\begin{equation*}
	\theta(\lambda,C):=
	\Pw(\bfC^{C}_t\neq\emptyset\,\, \forall t\geq0)
\end{equation*}
the \emph{survival probability} of a CPDLP with infection parameter $\lambda$ and initial state $\bfC_0=C$ and $\bfB_0\sim \pi$ (and all other parameters fixed).

We denote the \emph{critical infection rate} for survival by \vspace{-1mm}
\begin{align*}
	\lambda_c:=\inf \{\lambda\geq 0: \theta(\lambda,\{x\})>0\},
\end{align*}
where $x\in V$ is chosen arbitrary. Note that by translation invariance it follows that 
$\theta(\lambda,\{x\})=\theta(\lambda,\{y\})$ for all $x,y\in V$. Furthermore, this together with the additivity of the infection process $\bfC$ implies that $\theta(\lambda,C)>0$ for some $C\subset V$ with $0<|C|<\infty$ then this is true for all such sets. 
Thus, the definition of the critical rate $\lambda_c$ does not depend on the choice of the set of initially infected vertices as long as it is finite and non-empty.

Furthermore, by standard methods and using the monotonicity of the system, see also Remark~\ref{BasicProperties}, we get the existence of the upper invariant law  $\overline{\nu}$, which is the weak limit of the process started with $(\bfC_0,\bfB_0)=(V,\cE)$. (Whenever relevant we will indicate the dependence of $\overline{\nu}$ on the parameters of the model with subscripts.)
The upper invariant law is the largest invariant law according to the stochastic order, i.e. if $\nu$ is an invariant law of the CPDLP, then $\nu\preceq \overline{\nu}$, where "$\preceq$" denotes the stochastic order. Of course, there exists the trivial invariant law $\delta_{\emptyset}\otimes \pi$. This poses the question for which parameters 
it holds that $\overline{\nu}=\delta_{\emptyset}\otimes \pi$. Since it is not difficult to see that $\delta_{\emptyset}\otimes \pi$ is the smallest invariant law possible, $\overline{\nu}=\delta_{\emptyset}\otimes \pi$ is equivalent to 
ergodicity of the system, i.e.~that there exists a unique invariant law which is the weak limit of the process. We define the critical infection rate for non-triviality of the upper invariant law by 
\begin{equation*}
	\lambda'_c:=\inf\{\lambda>0:\overline{\nu}_{\lambda}\neq\delta_{\emptyset}\otimes \pi\}.
\end{equation*}

\section{Main results}
Our first result is that the critical infection rate of survival and the critical infection rate for a non-trivial  upper invariant law are the same. 
\begin{theorem}\label{CrticalValuesAgree}
	The two critical infection rates coincide, i.e.\ $\lambda'_c=\lambda_c$.
\end{theorem}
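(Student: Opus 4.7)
The plan is to establish self-duality of the CPDLP under $\bfB_0\sim\pi$ and read off the theorem as an immediate corollary. Precisely, I want to show that for all finite $A,B\subseteq V$ and all $t\geq 0$
\begin{equation*}
\Pw\bigl(\bfC_t^{A}\cap B\neq\emptyset\bigr)=\Pw\bigl(\bfC_t^{B}\cap A\neq\emptyset\bigr).
\end{equation*}
Granting this, pick $x\in V$, take $A=\{x\}$ and let $B_n\uparrow V$ be an exhausting sequence of finite sets; then $\Pw(\bfC_t^{\{x\}}\cap B_n\neq\emptyset)=\Pw(x\in\bfC_t^{B_n})$, and sending $n\to\infty$ (using $|\bfC_t^{\{x\}}|<\infty$ a.s.\ from Proposition \ref{propn:finiteinfectedsets2} on the left and monotonicity on the right) yields $\Pw(\bfC_t^{\{x\}}\neq\emptyset)=\Pw(x\in\bfC_t^{V})$. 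Letting $t\to\infty$, the left-hand side decreases to $\theta(\lambda,\gamma,q,\{x\})$ and the right-hand side to $\overline{\nu}(\{(C,B'):x\in C\})$. Consequently $\theta>0$ if and only if $\overline{\nu}\neq\delta_\emptyset\otimes\pi$, which is exactly $\lambda_c(\gamma,q)=\lambda'_c(\gamma,q)$.

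The duality itself I would prove by time-reversing the graphical representation from Section \ref{GraphRepCPDLP}. The key inputs are: for each ordered pair $(x,y)$, the infection arrows form an independent Poisson process of rate $\lambda$ on $[0,\infty)$ (which transmits an actual infection only at those arrow times where the underlying edge is open); for each $x\in V$, the recovery marks form an independent Poisson process of rate $r$; and $\bfB$, started in $\pi$, evolves so that each edge is an independent two-state Markov chain. Under the transformation $s\mapsto t-s$ on $[0,t]$, the arrow processes on $(x,y)$ and $(y,x)$ swap roles without changing their joint law, the recovery processes are invariant in law, and the background is invariant in law because each edge chain is reversible with respect to its individual invariant distribution. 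Hence the reversed graphical data has the same joint distribution as the original, and every forward infection path from $A\times\{0\}$ to $B\times\{t\}$ corresponds to a reversed-arrow infection path from $B\times\{0\}$ to $A\times\{t\}$, which delivers the duality.

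The main obstacle is making the time-reversal argument rigorous in the long-range setting: each vertex has infinitely many incident edges, so some care is needed with measurability. My plan is to first prove duality for a truncated model in which only edges $e$ with $d(e)\leq N$ are present, where everything is locally finite and the reversal is standard, and then pass $N\to\infty$ using the integrability condition \eqref{ExistenceAssumption} together with the well-posedness results from Section \ref{GraphRepCPDLP} (Propositions \ref{propn:Feller} and \ref{propn:finiteinfectedsets2}) to justify the approximation. Once the duality is in hand, the final limits $n\to\infty$ and $t\to\infty$ are routine monotonicity.
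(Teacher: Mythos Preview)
Your proposal is correct and follows essentially the same route as the paper: establish self-duality $\Pw(\bfC_t^{A}\cap B\neq\emptyset)=\Pw(\bfC_t^{B}\cap A\neq\emptyset)$ via time-reversal of the graphical representation (using reversibility of the stationary background $\bfB$ under $\pi$), and then pass to the limit to identify the survival probability with the density of $\overline{\nu}$. The only point you glossed over is that $\overline{\nu}$ is defined as the weak limit from $(V,\cE)$, not $(V,\pi)$, so to conclude $\Pw(x\in\bfC_t^{V})\to\overline{\nu}(\{(C,B'):x\in C\})$ you need the observation (which the paper records) that starting the background in $\pi$ rather than $\cE$ still yields convergence to $\overline{\nu}$; this follows from $\pi\preceq\delta_{\cE}$ and monotonicity.
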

The next result provides a coupling of the CPDLP with a contact process that has a general infection kernel. As a consequence we obtain that if this contact process survives then this implies survival of the CPDLP, and thus this leads to a sufficient criterion for a positive survival probability of the CPLDP. We first define the contact process $\bfX$ with an \emph{infection kernel} $(a_{e})_{e\in \cE}\subset[0,\infty)$ and recovery rate $r>0$ on the complete graph $(V,\cE)$. We additionally assume that $a_{\{x,y\}}=a_{\{x',y'\}}$ if $d(x,y)=d(x',y')$ and
\begin{equation}
	\label{RateBound}
	\sum_{y\in V\backslash\{x\}}a_{\{x,y\}}<\infty
\end{equation}
for all $x\in V$. If $\bfX$ is currently in the state $C$ it has the transitions
\begin{equation}
	\label{GeneralInfection}
	\begin{aligned}
		C\to C\cup \{x\}	\quad &\text{ at rate } \sum_{y\in C} a_{\{x,y\}} \text{ and }\\
		C\to C\backslash \{x\}	\quad &\text{ at rate } r.
	\end{aligned}
\end{equation}
This process can again be constructed via a graphical representation and it is a well known fact that if \eqref{RateBound} is satisfied then $\bfX$ is a well-defined Feller process  on the state space $\cP(V)$, see for example \cite[Propostion~I.3.2]{liggett2012interacting} and \cite{swart2009contact}.
As usual we indicate the initial configuration $C\subset V$ by adding a superscript $\bfX^C$, i.e.\ $\bfX^{C}_0=C$. 
\begin{theorem}
	\label{ComparisonWithLongRangeCP}
	Let $C\subset V$ and $(\bfC^{C}_t,\bfB_t)_{t\geq 0}$ be a CPDLP with parameter $\lambda,q$ and $\gamma$. Then there exists a contact process  $(\overline{\bfX}^{C}_t)_{t\geq0}$ with $\overline{\bfX}^{C}_0=C$ and with infection kernel 
	\begin{align*}
		\overline{a}_{e}(\lambda):=\frac{1}{2}\Big(\lambda+\hat{v}_{e}-\sqrt{(\lambda+\hat{v}_{e})^2- 4 \lambda\hat{v}_{e}\hat{p}_{e}}\,\Big)\geq 0
	\end{align*}
	for all $e\in \cE$ such that $\overline{\bfX}^{C}_t\subset \bfC^{C}_t$ for all $t\geq 0$. Thus, in particular
	$$\lambda_c	\leq \overline{\lambda}_c,$$
	where $\overline{\lambda}_c:=\inf\{\lambda>0:\Pw(\overline{\bfX}^{\{x\}}_t\neq \emptyset \,\,\forall t\geq 0)>0\}$ is the critical infection rate for survival of $\overline{\bfX}$ (which is again independent of $x \in V$).
\end{theorem}
The following results are concerned with the behavior of survival as we scale percolation probability and speed with the parameters $q$ and $\gamma$. Thus, we assume that the percolation and speed kernels are as in (\ref{scaledkernels}) and consider the survival probability $\theta$ and the critical infection parameter $\lambda_c$ as functions of $\gamma$ and $q$.
\begin{remark}
	Note that in this setting a CPDLP with rates $\lambda,r,\gamma,q$ has the same dynamics as a CPDLP with rates $\lambda/r,1,\gamma/r,q$ when time is sped up by a factor of $r$, and the survival probabilities are the same.
	Thus, the survival behavior of a CPDLP with rates $\lambda,r,\gamma,q$ 
	can be deduced from the survival behavior of a CPDLP with rates $\lambda,1,\gamma,q$.
	In other words, it is not necessary to explicitly study the dependence on $r$.
\end{remark}

As a consequence of Theorem \ref{ComparisonWithLongRangeCP} we obtain the following result for fast update speed.
\begin{corollary}\label{CritRateFastSpeed}
	Let $\bfX^{\infty}$ be a contact process with infection kernel $(\lambda q p_e)_{e\in \cE}$ and denote the corresponding critical infection rate by
	\begin{equation*}
		\lambda^{\infty}_{c}(q)=\inf\{\lambda>0:\Pw_{\lambda,q}(\bfX^{\{x\},\infty}\neq \emptyset \,\, \forall t\geq 0)>0\}.
	\end{equation*}
	Then we have
	\begin{equation*}
		\limsup_{\gamma\to \infty} \lambda_c(\gamma,q)\leq\lim_{\gamma\to \infty}\overline{\lambda}_c(\gamma,q)=\lambda^{\infty}_{c}(q)<\infty.
	\end{equation*}
\end{corollary}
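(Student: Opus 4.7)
The plan has three ingredients. The inequality $\limsup_{\gamma\to\infty}\lambda_c(\gamma,q) \leq \lim_{\gamma\to\infty}\overline{\lambda}_c(\gamma,q)$ follows immediately from Theorem~\ref{ComparisonWithLongRangeCP} by passing to the limit in $\lambda_c(\gamma,q) \leq \overline{\lambda}_c(\gamma,q)$. The remaining work is to identify $\lim_{\gamma\to\infty}\overline{\lambda}_c(\gamma,q)$ with $\lambda^{\infty}_c(q)$ and to prove finiteness of this quantity.

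First I would analyse the kernel $\overline{a}_e(\lambda,\gamma,q)$. Multiplying numerator and denominator by the conjugate gives the equivalent form
\[
\overline{a}_e(\lambda,\gamma,q) = \frac{2 v_e p_e \lambda \gamma q}{\lambda + \gamma v_e + \sqrt{(\lambda + \gamma v_e)^2 - 4 v_e p_e \lambda \gamma q}},
\]
from which $\overline{a}_e(\lambda,\gamma,q) \to \lambda q p_e$ as $\gamma \to \infty$ is immediate. A direct computation of $\partial_\gamma \overline{a}_e$ (equivalently, treating $u := \gamma v_e$ as variable) shows $\overline{a}_e$ is non-decreasing in $\gamma$: the required inequality $\sqrt{(\lambda+u)^2 - 4 p_e \lambda q u} \geq \lambda + u - 2 p_e \lambda q$ reduces upon squaring to $4\lambda^2 p_e q(1-p_e q) \geq 0$, which holds since $p_e q \leq 1$. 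Hence $\overline{a}_e(\lambda,\gamma,q) \uparrow \lambda q p_e$, and by the standard graphical-representation coupling $\overline{\lambda}_c(\gamma,q)$ is non-increasing in $\gamma$, so the limit $L := \lim_{\gamma\to\infty}\overline{\lambda}_c(\gamma,q)$ exists.

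The inequality $L \geq \lambda^{\infty}_c(q)$ is immediate: for any $\gamma$, the bound $\overline{a}_e(\lambda,\gamma,q) \leq \lambda q p_e$ together with the monotone coupling forces $\overline{\lambda}_c(\gamma,q) \geq \lambda^{\infty}_c(q)$. For the reverse $L \leq \lambda^{\infty}_c(q)$, fix $\lambda > \lambda^{\infty}_c(q)$ and choose $\lambda' \in (\lambda^{\infty}_c(q),\lambda)$, so that $\bfX^{\infty}$ with kernel $\lambda' q p_e$ survives. The main technical step is to pick $R < \infty$ such that the truncated contact process with kernel $\lambda' q p_e \1_{\{d(e) \leq R\}}$ still survives; this is the principal obstacle and rests on the fact that strict supercriticality of a long-range contact process on a bounded-degree transitive graph is preserved under truncation to sufficiently large but finite range, which can be established via a standard Bezuidenhout-Grimmett-type block/restart construction. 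Given such $R$, uniform convergence of $\overline{a}_e(\lambda,\gamma,q)$ to $\lambda q p_e$ on the finite set $\{e : d(e) \leq R\}$ yields $\overline{a}_e(\lambda,\gamma,q) \geq \lambda' q p_e \1_{\{d(e) \leq R\}}$ for all $\gamma$ large enough, so $\overline{\bfX}$ dominates the surviving truncated process and $\overline{\lambda}_c(\gamma,q) \leq \lambda$.

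Finally, $\lambda^{\infty}_c(q) < \infty$ follows from the assumption that not all $p_e$ vanish: choosing $d_0$ with $p_e > 0$ for $d(e) = d_0$, the process $\bfX^{\infty}$ dominates a contact process at rate $\lambda q p_{d_0}$ on the bounded-degree transitive subgraph $(V,\cE_{d_0})$, which contains an infinite connected component and therefore supports a contact process of finite critical rate by classical results, see e.g.~\cite{liggett2012interacting}.
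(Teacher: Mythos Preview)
Your overall structure is sound and your monotonicity and finiteness arguments match the paper's Lemma~\ref{AsympFastSpeedLimitRates} and final paragraph. The genuine difference is in the inequality $L\le\lambda^{\infty}_c(q)$. You truncate the kernel to a finite range $R$ and invoke a Bezuidenhout--Grimmett-type stability result (``strict supercriticality is preserved under truncation'') to make the comparison finite-dimensional. This works in principle, but it imports a heavy piece of machinery that you yourself flag as the principal obstacle, and for general transitive graphs such a block construction is not entirely off-the-shelf.

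The paper avoids this detour completely by noticing that assumption~\eqref{ExistenceAssumption} forces $(v_e)_{e\in\cE}$ to be uniformly bounded below, since $\sum_y v_{\{x,y\}}^{-1}<\infty$ and the values depend only on the distance. From the elementary bound $\sqrt{1-x}\le 1-\tfrac{x}{2}$ one gets
\[
\overline{a}_e(\lambda,\gamma,q)\ \ge\ \frac{\gamma v_e}{\lambda+\gamma v_e}\,\lambda q p_e,
\]
and because $\inf_e v_e>0$ the prefactor $\tfrac{\gamma v_e}{\lambda+\gamma v_e}$ exceeds $1-\varepsilon$ \emph{uniformly in $e$} once $\gamma$ is large. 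Hence $\overline{a}_e(\lambda,\gamma,q)\ge(1-\varepsilon)\lambda q p_e$ for all $e$ simultaneously, and a direct monotone coupling with $\bfX^{\infty}$ (no truncation, no block argument) yields $\overline{\lambda}_c(\gamma,q)\le(1-\varepsilon)^{-1}\lambda^{\infty}_c(q)$.

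So your route is not wrong, but it trades a one-line uniform estimate (available precisely because of the model's standing assumption on $(v_e)$) for a substantial and only sketched structural result about long-range contact processes. The paper's argument is both shorter and self-contained.
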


The next result shows that for any fixed speed if we have overall a low probability that an edge of any length is open, i.e. for $q$ small enough, we are  in the immunization region for the CPDLP. This means that the critical infection rate is infinite and so no matter how large the infection rate is the CPDLP will die out almost surely 
\begin{theorem}\label{ImmunThm}
	For any fixed $\gamma>0$ there exists $q_0=q_0(\gamma)\in (0,1]$ such that $\bfC$ dies out almost surely for all $q <q_0$, regardless of the choice of $\lambda>0$, i.e.~$\lambda_c(\gamma,q)=\infty$ for all $q<q_0$,  and such that $\lambda_c(\gamma,q)<\infty$ for all 
	$q>q_0$. Moreover, the function $\gamma \mapsto q_0(\gamma)$ is monotone non-increasing on $(0,\infty).$
\end{theorem}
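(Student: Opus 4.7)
The plan has three pieces: identify a threshold $q_0(\gamma)>0$ below which the CPDLP dies out for every $\lambda$, observe that monotonicity in $q$ then yields the claimed threshold structure, and prove monotonicity of $q_0$ in $\gamma$ via the time-rescaling remark. For the first piece I would dominate $\bfC^{\{x\}}$ by a subcritical branching process. Focus on a single infected seed $x$ with $\bfB_0\sim\pi$, and let $\tau\sim\mathrm{Exp}(r)$ be its recovery time. A vertex $y\neq x$ can only be infected from $x$ if the edge $\{x,y\}$ is open at some time during $[0,\tau]$, so the number of first-generation offspring is bounded above, \emph{uniformly in $\lambda$}, by $N_x:=\#\{y\neq x:\{x,y\}\text{ open at some }s\in[0,\tau]\}$. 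For each edge $e$, the open/closed indicator is a two-state Markov chain with rates $\hat v_e\hat p_e$ and $\hat v_e(1-\hat p_e)$ and stationary probability $\hat p_e=qp_e$, so conditioning on $\tau$ yields
$$\Pw(\{x,y\}\text{ open in }[0,\tau])=\frac{\hat p_e(r+\hat v_e)}{r+\hat v_e\hat p_e}\le\hat p_e\Bigl(1+\frac{\hat v_e}{r}\Bigr).$$
Summing gives $\mu(\gamma,q):=\E[N_x]\le q\sum_{y\neq x}p_{\{x,y\}}+\tfrac{q\gamma}{r}\sum_{y\neq x}v_{\{x,y\}}p_{\{x,y\}}$. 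The second sum is finite by \eqref{ExistenceAssumption}; for the first, the assumption $\sum v_e^{-1}<\infty$ forces $v_e\ge 1$ outside a finite set, so the tail of $\sum_y p_{\{x,y\}}$ is dominated by $\sum v_e p_e$ and the remaining finitely many terms are bounded (the bounded-degree assumption on $G$ gives finitely many vertices within any bounded distance). Hence $\mu(\gamma,q)\to 0$ as $q\downarrow 0$, so there exists $q_0(\gamma)>0$ with $\mu(\gamma,q)<1$ for all $q<q_0(\gamma)$.

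To upgrade this first-moment bound to almost-sure extinction, I would expose the infection tree in breadth-first order and, at each newly infected vertex, use the strong Markov property of $\bfB$ (which is started from its invariant law $\pi$) to compare the future offspring to an independent copy of $N$; this builds an independent Galton--Watson process dominating $\bfC^{\{x\}}$ with mean offspring $\mu(\gamma,q)<1$, which dies out almost surely, so $\bfC^{\{x\}}$ does too. Thus $\lambda_c(\gamma,q)=\infty$ for $q<q_0(\gamma)$. The graphical representation makes $\theta(\lambda,\gamma,q,\{x\})$ non-decreasing in both $\lambda$ and $q$ (standard thinning couplings of infection arrows and of the ``open'' marks at edge updates), so $\{q:\lambda_c(\gamma,q)<\infty\}$ is an upper set in $(0,1]$ and the infimum $q_0(\gamma):=\inf\{q:\lambda_c(\gamma,q)<\infty\}$ has the claimed structure. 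For the monotonicity in $\gamma$, I would use the time-rescaling observation from the introduction: after a time change by $s=\gamma_2/\gamma_1$, a CPDLP with rates $(\lambda,r,\gamma_1,q)$ has the same law as one with rates $(\lambda s,rs,\gamma_2,q)$, hence identical survival behavior. Since lowering the recovery rate $r$ only increases survival (by thinning recovery marks in the graphical representation), survival at $(\lambda s,rs,\gamma_2,q)$ implies survival at $(\lambda s,r,\gamma_2,q)$. Therefore $\lambda_c(\gamma_1,q)<\infty$ implies $\lambda_c(\gamma_2,q)\le\lambda_c(\gamma_1,q)\gamma_2/\gamma_1<\infty$, which gives $q_0(\gamma_2)\le q_0(\gamma_1)$.

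The main obstacle is upgrading the first-moment bound into a genuine almost-sure extinction statement: the background $\bfB$ is shared across different infection lineages in the CPDLP, so a direct independent Galton--Watson comparison is not automatic, and the Markov-property exposure argument must be performed along the graphical representation in a way that preserves pathwise domination and the uniformity of the bound in $\lambda$.
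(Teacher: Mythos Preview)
Your first-moment computation is correct, and your monotonicity-in-$\gamma$ argument via time rescaling and monotonicity in the recovery rate is exactly Proposition~\ref{WeakSpeedMonotonicity}, which is also how the paper obtains the last claim. The gap is in the branching comparison, and it is more fundamental than the background-sharing issue you flag at the end.

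The quantity $N_x$ counts distinct neighbours whose edge to $x$ is open at some point during $x$'s \emph{first} infected period $[0,\tau]$. But in the contact process $x$ can be re-infected by one of its descendants, and during such a later period an entirely different set of edges may be open, allowing $x$ to reach vertices not seen by $N_x$. A bound uniform in $\lambda$ therefore has to control the \emph{total} time each vertex spends infected, not just its first lifetime, and that total time depends on the whole process. If instead you build the standard infection tree in which every infection event is a separate node, the domination is correct but the offspring mean is $\tfrac{\lambda}{r}\sum_y\hat p_{\{x,y\}}$, which diverges as $\lambda\to\infty$; if you collapse repeated infections of the same child vertex into a single node (so that the offspring count is $\le N_x$), the resulting tree no longer dominates the process, because the child's second and later infected periods---triggered by further arrows from the same parent after an intervening recovery---can reach new vertices and are not represented anywhere in the collapsed tree. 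Your breadth-first exposure with the strong Markov property does not repair this: at a freshly infected vertex you cannot cap its future infected time by a single independent $\mathrm{Exp}(r)$ variable.

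The paper circumvents the re-infection problem by a space--time block construction rather than a genealogical branching bound. Time is cut into intervals $[nT,(n+1)T)$, and a vertex $x$ is guaranteed to be healthy at the end of block $n$ only if there is a recovery event at $x$ in the block \emph{and} every edge incident to $x$ stays closed throughout the block; the second condition is precisely what rules out re-infection of $x$ within the block and makes the bound uniform in $\lambda$. The infection is then dominated by a percolation-type process $Y_n$ on $V\times\N_0$ (Lemma~\ref{ExtinctionLemma}), and one shows $\E[|Y_1|\mid Y_0=\{x\}]<1$ by first taking $T$ large (so the no-recovery probability $e^{-rT}$ is small) and then taking $q$ small as a function of $T$ (so that the connected component of $x$ in the percolation ``$e$ open at some time in $[0,T)$'' is small). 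This two-parameter decoupling of the recovery mechanism from the isolation mechanism has no analogue in a single-lifetime branching bound.
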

As a corollary we can also get more insight into the  behavior of  the critical infection rate $\lambda_c(\gamma,q)$
as a function of $\gamma$ and in particular into its asymptotic behavior 
as $\gamma \to 0$.
Note that while it is clear due to monotonicity that $q \mapsto \lambda_c(\gamma,q) $ is a monotone non-increasing function, see also Remark~\ref{BasicProperties}, this is not so clear for the function $\gamma \mapsto \lambda_c(\gamma,q)$. Nonetheless, it can be shown (see Proposition~\ref{WeakSpeedMonotonicity}) that $\lambda_c(\gamma,q)$ can at most increase linearly in $\gamma$ which implies that 
there exists a $\gamma_0(q)$ such that $\lambda_c(\gamma,q)=\infty$ for all $\gamma<\gamma_0(q)$ and $\lambda_c(\gamma,q)<\infty$ for all  $\gamma>\gamma_0(q)$. Note that $\gamma_0(q)$ must be  finite for any $q>0$ due to Corollary \ref{CritRateFastSpeed} but that it may be $0$.
However, the following corollary states that for small enough $q$ we have $\gamma_0(q)>0$ such that a nontrivial immunization phase exists.
For this we now set
\begin{equation}
	\label{def:q_1}
	q_1:=\sup_{\gamma \in (0,\infty)} q_0(\gamma)= \lim_{\gamma \rightarrow 0} q_0(\gamma),
\end{equation}
where we have used  the monotonicity of $\gamma \mapsto q_0(\gamma)$ stated in Theorem \ref{ImmunThm}.
This means that by Theorem \ref{ImmunThm} for every $q< q_1$ there exists a 
$\gamma>0$ such that $q<q_0(\gamma) <q_1$ which implies $\lambda_c(\gamma,q)=\infty$ and thus also $\gamma_0(q)>0$. In summary, we have the following statement:

\begin{corollary}\label{PartialResultsSlowSpeed}
	For every $q\in (0,1]$ there exists a $\gamma_0=\gamma_0(q)\in [0,\infty)$ 
	such that $\lambda_c(\gamma,q)=\infty$ for all $\gamma<\gamma_0$  and $\lambda_c(\gamma,q)<\infty$ for all $\gamma>\gamma_0$.
	Furthermore, there exists a $q_1\in(0,1]$, see \eqref{def:q_1}, so that for every $q< q_1$ 
	we have $\gamma_0(q)>0$ while  for every $q> q_1$ we have $\gamma_0(q)=0.$
	This implies in particular for every $q< q_1$ that $\lim_{\gamma\to0}\lambda_c(\gamma,q)=\infty$.
\end{corollary}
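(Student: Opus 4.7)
The plan is to assemble this corollary as straightforward bookkeeping from the three earlier results already in hand: Theorem~\ref{ImmunThm} (immunization for small $q$ at fixed $\gamma$, with monotone $\gamma\mapsto q_0(\gamma)$), Corollary~\ref{CritRateFastSpeed} ($\limsup_{\gamma\to\infty}\lambda_c(\gamma,q)<\infty$), and Proposition~\ref{WeakSpeedMonotonicity} (at-most-linear growth of $\lambda_c(\cdot,q)$, which implies that the set $\{\gamma>0:\lambda_c(\gamma,q)=\infty\}$ is a downward-closed interval). No new probabilistic input is needed.

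For the first part, I would simply set $\gamma_0(q):=\inf\{\gamma>0:\lambda_c(\gamma,q)<\infty\}$. Finiteness $\gamma_0(q)<\infty$ follows from Corollary~\ref{CritRateFastSpeed}, since for all sufficiently large $\gamma$ the value $\lambda_c(\gamma,q)$ is bounded by something converging to $\lambda^\infty_c(q)<\infty$. The statement that $\lambda_c(\gamma,q)=\infty$ for $\gamma<\gamma_0(q)$ and $\lambda_c(\gamma,q)<\infty$ for $\gamma>\gamma_0(q)$ is then exactly the threshold structure implied by Proposition~\ref{WeakSpeedMonotonicity}: if $\lambda_c$ were finite at some $\gamma_1$ but infinite at some $\gamma_2>\gamma_1$, this would contradict a linear upper bound of $\lambda_c(\gamma_2,q)$ in terms of $\lambda_c(\gamma_1,q)$.

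For the second part, I would first justify the equality $q_1=\sup_{\gamma>0}q_0(\gamma)=\lim_{\gamma\to 0}q_0(\gamma)$ from \eqref{def:q_1}: this is immediate from the monotonicity of $\gamma\mapsto q_0(\gamma)$ stated in Theorem~\ref{ImmunThm}, and $q_1\in(0,1]$ because each $q_0(\gamma)\in(0,1]$. Now split into two cases:
\begin{itemize}
\item If $q<q_1$, then by definition of the supremum there exists $\gamma^\ast>0$ with $q_0(\gamma^\ast)>q$. Theorem~\ref{ImmunThm} then yields $\lambda_c(\gamma^\ast,q)=\infty$, so by the threshold from the first part $\gamma_0(q)\geq\gamma^\ast>0$.
\item If $q>q_1$, then $q>q_0(\gamma)$ for every $\gamma>0$, so Theorem~\ref{ImmunThm} gives $\lambda_c(\gamma,q)<\infty$ for every $\gamma>0$, whence $\gamma_0(q)=0$.
\end{itemize}
The concluding limit statement is then immediate: for $q<q_1$ the interval $(0,\gamma_0(q))$ is nonempty, and $\lambda_c(\gamma,q)=\infty$ on it, so $\lim_{\gamma\to 0}\lambda_c(\gamma,q)=\infty$.

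There is no real obstacle in this argument; the only point that needs a moment of care is verifying that Proposition~\ref{WeakSpeedMonotonicity} genuinely delivers the one-sided threshold property of $\gamma\mapsto\lambda_c(\gamma,q)$ (since this is not a literal monotonicity statement but a linear comparison), and checking that the values $q_0(\gamma)$ stay bounded away from $0$ as $\gamma\to 0$ so that the supremum $q_1$ is strictly positive.
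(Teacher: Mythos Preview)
Your proposal is correct and follows essentially the same route as the paper: the argument is given in the text surrounding the corollary (and in a commented-out proof block), using Proposition~\ref{WeakSpeedMonotonicity} for the threshold structure in $\gamma$, Corollary~\ref{CritRateFastSpeed} for finiteness of $\gamma_0(q)$, and Theorem~\ref{ImmunThm} together with the definition of $q_1$ for the dichotomy in $q$. The only cosmetic difference is that the paper phrases the threshold argument via the non-increasing function $\gamma\mapsto\gamma^{-1}\lambda_c(\gamma,q)$ directly, whereas you phrase it as a contradiction to the linear bound; these are the same observation.
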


For general countable vertex sets $V$ we can only determine that $\lambda_c(\gamma,q) \rightarrow \infty$  when $\gamma  \rightarrow 0$ if $q<q_1$ is small enough. But in the special case $V=\Z$ and $E=\{\{x,y\}\subset \Z: |x-y|=1\}$, i.e.~when $G=(V,E)$ is the $1$-dimensional integer lattice we can conclude that this is the case for all $q<1,$ under some further assumptions. In fact, these assumptions even guarantee that for any fixed $\lambda$ we cannot have survival  if the update speed $\gamma$ is small enough.

\begin{theorem}\label{AsymptoticSlowSpeedThm}
	Consider $G$ to be the $1$-dimensional integer lattice. Let $q<1$ be fixed and $C\subset V$ be non-empty and finite. Furthermore, assume that the sequences $(p_{e})_{e\in \cE}$ and $(v_{e})_{e\in \cE}$ satisfy 
	\begin{equation}\label{StrongerAssumption}
		\sum_{y\in \N}yv_{\{0,y\}}p_{\{0,y\}}<\infty
		\quad \text{ and } \quad 
		\sum_{y\in \N}yv_{\{0,y\}}^{-1} <\infty.
	\end{equation}
	Then, for every $\lambda>0$ there exists $\gamma^*=\gamma^*(\lambda,q)>0 $ such that $\bfC^{C}$ dies out almost surely for all $\gamma\leq\gamma^*$, i.e.~$\theta(\lambda,\gamma,q,C)=0$ for all $\gamma\leq\gamma^*$. Thus, in particular $\lim_{\gamma\to 0}\lambda_c(\gamma,q)=\infty$.
\end{theorem}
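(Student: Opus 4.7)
The plan is to exploit that $\gamma\to 0$ makes the background quasi-static, while on $\Z$ the stationary background distribution is subcritical under \eqref{StrongerAssumption}. The first step is to show that on the static background $\bfB_0\sim\pi$ the (quenched) contact process starting from a finite set $C$ dies out in a.s.\ finite time. For this I verify, using $p_k\leq 1$ and the AM-GM inequality
\[
k\sqrt{p_k}=\sqrt{(k v_k p_k)(k/v_k)}\leq\tfrac{1}{2}\bigl(k v_k p_k+k/v_k\bigr),
\]
that $\sum_{k\in\N}k p_k<\infty$. Since $q<1$ gives $q p_k\leq q<1$ uniformly, the probability that no edge of $\bfB_0$ crosses the cut between $n$ and $n+1$ equals $\prod_{k\geq 1}(1-q p_k)^k>0$; by ergodicity of the product measure under shifts this event occurs i.o.\ on both sides of any finite set a.s., so the $\bfB_0$-cluster of $C$ is a.s.\ finite. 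On such a finite cluster the quenched contact process is a finite-state continuous-time Markov chain with $\emptyset$ an absorbing state reachable from every other, so it hits $\emptyset$ in a.s.\ finite time.

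To extend this to small $\gamma>0$, I would couple the CPDLP with the quenched contact process on $(V,\bfB_0)$ via a common graphical-representation: the same infection and recovery Poisson processes drive both, so the two processes evolve identically between background flips at edges incident to $\bfC$ or its current $\bfB$-cluster. Under \eqref{StrongerAssumption}, the total rate of such relevant flips across any cut of $\Z$ is $O(\gamma)$, since opening events occur at rate $\gamma q\sum_k k v_k p_k<\infty$ across a fixed cut, and analogously for flips on the boundary of the cluster of $\bfC$. I would then partition time into a renewal sequence of trial windows separated by these relevant flips, and argue that on each window the CPDLP has a conditional probability bounded below by some $p>0$ of hitting $\emptyset$: each window has mean length of order $1/\gamma\to\infty$, while the quenched contact process started from the current $\bfC$ inside a finite cluster has a.s.\ finite extinction time. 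A direct iteration (or conditional Borel-Cantelli) then gives extinction in some window a.s., i.e.\ $\theta(\lambda,\gamma,q,C)=0$ for $\gamma$ small enough.

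The main obstacle is establishing a \emph{uniform} lower bound on the per-window extinction probability: at the start of each window, the configuration of $\bfC$ and its relevant cluster in $\bfB$ is random and can a priori be spatially large. This will require a tail bound on the envelope $\cR_t:=\{y: y\text{ is connected to }C\text{ in }\bigcup_{s\leq t}\bfB_s\}$, which starts as the a.s.\ finite static cluster of $C$ in $\bfB_0$ and whose growth rate is bounded by $O(\gamma|\cR_t|)$ via \eqref{ExistenceAssumption}, so $|\cR_t|$ is dominated by a Yule-type process; this lets one control $|\cR_t|$ on the relevant $O(1/\gamma)$ time scale, and in turn the quenched cluster available for infection. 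A further technicality is that although $\bfB$ is autonomous in marginal law, its conditional law given the infection history is biased (infected vertices tend to have more open adjacent edges), so the renewal bounds must be derived via the strong Markov property of the joint process $(\bfC,\bfB)$ rather than under $\pi$.
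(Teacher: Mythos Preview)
Your static argument in the first paragraph is correct and is indeed the heuristic behind the theorem. The gap is in the passage to small $\gamma>0$: the renewal/trial scheme you sketch does not yield $\theta=0$, only (at best) $\theta\to 0$. The reason is that the ``uniform lower bound on the per-window extinction probability'' you flag as the main obstacle is genuinely unavailable. After each relevant flip the cluster of $\bfC$ in $\bfB$ may grow, so the envelope $\cR_t$ increases and the next window shortens to order $1/(\gamma|\cR_t|)$, while for a fixed large $\lambda$ the quenched extinction time on a connected graph of $N$ vertices is typically \emph{exponential} in $N$. Thus the per-window success probability can decay faster than any summable sequence, and conditional Borel--Cantelli fails. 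Your Yule bound only controls $|\cR_t|$ on time scales $O(1/\gamma)$, where it yields $O(1)$ windows with success probability strictly below $1$; that gives $\theta\le 1-p<1$, not $\theta=0$. Note also that $\theta(\lambda,\gamma,q,\{x\})\to 0$ as $\gamma\to 0$ does \emph{not} imply $\lambda_c(\gamma,q)\to\infty$, since $\theta$ could remain positive for every $\gamma>0$.

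The paper circumvents this by a space--time block construction rather than a temporal renewal. One sets $T=1/\gamma$ and dominates the background on each strip $[nT,(n+1)T)$ by an i.i.d.\ long range percolation whose edge probabilities $1-\delta_e$ are \emph{independent of $\gamma$} (this is where the choice $T=1/\gamma$ matters). Assumption~\eqref{StrongerAssumption} then guarantees cut points in each strip, which are used to partition $\Z\times[nT,(n+1)T)$ into boxes of bounded width $\le 2(K_0+r_0)$. One records three Bernoulli indicators per box: $X$ (no cut in a middle zone), $W$ (a long edge crosses between boxes), and $U$ (the quenched contact process survives the box). The bounds on $X$ and $W$ depend only on $r_0,K_0$; only the bound on $U$ uses $\gamma$, via the a.s.\ finite extinction time of the contact process on a \emph{fixed} finite complete graph of size $2(K_0+r_0)$. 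Choosing first $r_0=K_0$ large and then $\gamma$ small makes the resulting oriented box-percolation subcritical, so its cluster is finite a.s.\ and hence $\theta=0$. The key idea you are missing is this spatial confinement: rather than tracking a growing envelope, one fixes the box size in advance and lets $T\to\infty$ act only on the survival-within-a-box variable.
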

Note that \eqref{StrongerAssumption} is a stronger assumption than \eqref{ExistenceAssumption}, and thus already implies the latter assumption.

\subsection{Outline}
The rest of this paper is organized as follows. In Section~\ref{Discussion&OpenProblems} we discuss some related literature in order to put our results into context with the current state of research. Then, we state and discuss some open problems and possible directions for future research.

Since we will use on several occasions a comparison with an independent long range percolation model we introduce this type of model in Section~\ref{LongRangePercolationModel} and state some conditions which imply the absence of an infinite connected component.

In Section~\ref{GraphRepCPDLP} we construct the CPDLP via a graphical representation. Furthermore, in Subsection~\ref{Welldefined&Feller} we show that this construction yields a well-defined Feller process. In Subsection~\ref{InvariantLaw} we describe the construction of a dual infection process, which yields a self-duality relation. We then use this relation to prove Theorem~\ref{CrticalValuesAgree}.
In Subsection~\ref{ConsequencesOfTheGraphRep} we use the graphical representation to show Theorem~\ref{ComparisonWithLongRangeCP} and Corollary~\ref{CritRateFastSpeed}.

In Section~\ref{BlockwiseLongRangePercolation} we compare the dynamical long range  percolation blockwise with an independent long range percolation model and define a new infection process, which dominates the original one. We use this newly defined process to show Theorem~\ref{ImmunThm} in Subsection~\ref{ImmunizationRegime}. Lastly, we show 
Theorem~\ref{AsymptoticSlowSpeedThm} in Subsection~\ref{SlowSpeedRegime}.

\section{Discussion}\label{Discussion&OpenProblems} 
\subsection{Related literature}
\label{sec:relatedlit}
The contact process was first introduced almost half a century ago by Harris \cite{harris1974contact} on $\Z^d$. Since then this process and many variations of it have been studied intensively, mostly on bounded degree graphs. 
To the best of our knowledge the first to introduce  a long range variation of the contact process, where there is no intrinsic bound on the distance between two vertices for which a transmission of an infection can take place,  was Spitzer~\cite{spitzer1977stochastic}. He studied so-called nearest particles systems. Bramson and Gray~\cite{bramson1981note} studied in particular the phase transition of similar systems. See also \cite[Chapter~VII]{liggett2012interacting} for more results on nearest particles systems.

Swart~\cite{swart2009contact} studied a contact process with general infection kernel $(a_e)_{e\in \cE}$ as in \eqref{RateBound} and \eqref{GeneralInfection}, see also \cite{athreya2010survival}, \cite{sturm2014subcritical} and \cite{swart2018simple} for more results on this process. For applications in certain areas of physics, see for example \cite{ginelli2006contact}.

Another long range variation of the model 
is a contact process defined on a random graph with unbounded degree. To be precise, the considered graph has almost surely finite degree but there exists no uniform bound for the degree of a vertex. 
For example, Can \cite{can2015contact} studied a contact process on an open cluster generated by a long range percolation, and M\'enard and Singh \cite{menard2015percolation} considered the phase transition of contact processes on more general graphs of unbounded degree.

In this paper we  study the spread of an infection in a dynamical random environment. To the best of our knowledge the first to study such a model  explicitly was Broman \cite{broman2007stochastic} followed by Steif and Warfheimer \cite{steif2007critical}, who considered a contact process with varying recovery rates. 
Remenik \cite{remenik2008contact} studied a related model and made connections to multi-type contact processes, which had  been studied earlier,
see for example Durrett and M\o ller \cite{durrett1991complete}.

Linker and Remenik \cite{linker2019contact} explicitly considered a contact process on a dynamical percolation, i.e.~in the evolving random environment the edges of the underlying graph of bounded degree open and close independently, and the closed edges cannot be used by the infection. They studied the phase transition for survival.
As a follow up we \cite{seiler2022contact} studied a contact process in a more general evolving edge random environment, but still kept the assumption that the underlying graph has bounded degree. For more work in this direction, see also Hilario et al.~\cite{hilario2021results}. In the present article we consider a graph with unbounded degree as a natural long range extension to these models on bounded degree graphs.

Finally we mention recent work by Gomes and Lima \cite{gomes2021long} on the survival probability of a dynamical long range contact process, which in contrast to our model includes a vertex update mechanism. At update events a vertex independently has a radius assigned according to some distribution. From this time on this vertex can infect every neighbor inside the ball of this radius. Note that in this model the orientation of the edges is important while in our model this is not the case since an edge is either open or closed for infections in either direction.

\subsection{Discussion and open problems}
Theorem~\ref{ComparisonWithLongRangeCP} states that a contact process $\overline{\bfX}$ with a specific infection kernel acts as a bound from below for the CPDLP such that survival of $\overline{\bfX}$ implies survival of the CPDLP. For this result we use a comparison result developed by Broman \cite{broman2007stochastic}. Furthermore, in Corollary~\ref{CritRateFastSpeed} we show that the critical infection rate $\lambda^{\infty}_{c}(q)$ of a contact process $\bfX^{\infty}$ with infection kernel $(\lambda q p_e)_{e\in \cE}$ is an upper bound for the limit of the critical infection rate of the CPDLP, i.e. 
\begin{equation*}
	\limsup_{\gamma\to \infty} \lambda_c(\gamma,q)\leq\lim_{\gamma\to \infty}\overline{\lambda}_c(\gamma,q)=\lambda^{\infty}_{c}(q)<\infty.
\end{equation*}
But in fact we will see in Lemma \ref{AsympFastSpeedLimitRates} that the rates $\overline{a}_e(\lambda, q,\gamma)$ of $\overline{\bfX}$ converge to $\lambda q p_e$ as $\gamma\to \infty$ from below. Thus, it seems plausible to assume that the following conjecture holds true.
\begin{conjecture}\label{LimitFastSpeed}
	Fix $q$ and assume that \eqref{ExistenceAssumption} is satisfied.  Then we conjecture that
	\begin{equation*}
		\lim_{\gamma\to \infty}\lambda_c(\gamma,q)= \lambda^{\infty}_{c}(q).
	\end{equation*}
\end{conjecture}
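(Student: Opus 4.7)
The upper bound $\limsup_{\gamma\to\infty}\lambda_c(\gamma,q)\leq \lambda^\infty_c(q)$ is already provided by Corollary~\ref{CritRateFastSpeed}, so what remains is the matching lower bound $\liminf_{\gamma\to\infty}\lambda_c(\gamma,q)\geq \lambda^\infty_c(q)$. Equivalently, I would aim to show that for any $\lambda<\lambda^\infty_c(q)$ the survival probability $\theta(\lambda,\gamma,q,\{x\})$ vanishes once $\gamma$ is large enough. The guiding heuristic is stochastic averaging: on time scales of order $\gamma^{-1}$ each edge $e$ equilibrates to its Bernoulli-$\hat p_e$ invariant law, so over any fixed macroscopic interval the infection arrows along $e$ in the graphical representation form a nearly Poisson process of intensity $\lambda q p_e$, which is exactly the rate driving $\bfX^\infty$. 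The task is thus to promote this heuristic to an honest \emph{upper} coupling of the CPDLP, complementing the lower coupling from Theorem~\ref{ComparisonWithLongRangeCP}.

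The coupling I would try to construct goes as follows. Fix $\epsilon>0$ small enough that $\lambda+\epsilon<\lambda^\infty_c(q)$ and let $\widetilde{\bfX}$ denote the contact process with infection kernel $((\lambda+\epsilon)q p_e)_{e\in \cE}$; by choice of $\epsilon$, $\widetilde{\bfX}^C$ dies out a.s.~from any finite seed $C$. On a joint graphical representation, drive $\bfC^C$ and $\widetilde{\bfX}^C$ by the same recovery clocks and, along every edge $e$, by a single Poisson process $\Pi_e$ of rate $(\lambda+\epsilon)q p_e$: the arrows of $\widetilde{\bfX}$ are exactly those of $\Pi_e$, while the arrows of $\bfC$ are obtained by thinning $\Pi_e$ to match the time-inhomogeneous rate $\lambda\mathbf{1}_{e\in \bfB_s}$. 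Partition $[0,T]$ into blocks $[t_k,t_{k+1}]$ of length $\Delta$ with $\gamma^{-1}\ll\Delta\ll 1$ and, for every edge $e$, set
\begin{equation*}
O_e^k:=\int_{t_k}^{t_{k+1}}\mathbf{1}_{e\in \bfB_s}\,ds.
\end{equation*}
The thinning succeeds on block $k$ for edge $e$ as long as $\lambda O_e^k\leq (\lambda+\epsilon) q p_e\Delta$, and by exponential ergodicity of the two-state chain governing $e$ this holds with probability at least $1-\exp(-c\epsilon^2 q p_e v_e\gamma\Delta)$.

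On the good event where the thinning succeeds throughout $[0,T]$ and across every edge potentially used by the infection, one has the pathwise inclusion $\bfC^C_t\subset \widetilde{\bfX}^C_t$ for all $t\leq T$. Since $\widetilde{\bfX}^C$ is confined to a finite spatial window with high probability (being a.s.~extinct), a union bound controlled by assumption~\eqref{ExistenceAssumption} should force the bad event to vanish as $\gamma\to\infty$. Combined with the a.s.~extinction of $\widetilde{\bfX}^C$ and the self-duality established in Subsection~\ref{InvariantLaw}, which allows one to upgrade positive probability of extinction from every finite seed into almost sure extinction via a standard restart argument, this would yield $\theta(\lambda,\gamma,q,\{x\})=0$ for $\gamma$ large enough and close the conjecture.

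The principal obstacle lies in making the block-wise thinning uniform over \emph{all} edges that could participate in the infection. Under~\eqref{ExistenceAssumption} long edges carry $\hat p_e=qp_e$ very small and $v_e$ very large, yet $v_ep_e\to 0$, so the exponent $c\epsilon^2 qp_e v_e\gamma\Delta$ in the deviation bound for $O_e^k$ degrades for large distances and need not beat the combinatorial growth of the number of edges at each scale. Absorbing this uniformly, while still dominating by a strictly subcritical contact process and preserving the summability in~\eqref{ExistenceAssumption}, appears to be the genuine technical difficulty and is presumably the reason the statement is left as a conjecture rather than a theorem.
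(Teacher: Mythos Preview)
The statement you address is explicitly labeled a \emph{conjecture} in the paper; there is no proof to compare against. The authors point to \cite[Theorem~2.3]{linker2019contact} as the natural template for the missing lower bound and note that its proof relies on bounded-degree assumptions which break down in the present long-range setting. You are right to conclude in your final paragraph that the statement is left open, and the obstruction you isolate there (uniformity of the averaging over infinitely many edge scales, where $v_ep_e\to 0$) is indeed a genuine one.

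That said, your proposed coupling has a more elementary flaw than the uniformity issue. You suggest driving $\widetilde{\bfX}$ by a Poisson process $\Pi_e$ of rate $(\lambda+\epsilon)qp_e$ and obtaining the infection arrows of $\bfC$ by ``thinning $\Pi_e$ to match the time-inhomogeneous rate $\lambda\mathbf{1}_{e\in\bfB_s}$''. But thinning can only \emph{decrease} a rate: at instants when $e$ is open the CPDLP's infection rate along $e$ is $\lambda$, which exceeds $(\lambda+\epsilon)qp_e$ whenever $qp_e<\lambda/(\lambda+\epsilon)$, i.e.~for essentially every edge. Hence no thinning of $\Pi_e$ can reproduce the correct law of the CPDLP's arrows. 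Your block criterion $\lambda O_e^k\le(\lambda+\epsilon)qp_e\Delta$ only compares \emph{integrated} intensities on each block, not the actual arrow positions within it; since recovery marks fall at specific times inside a block, matching totals does not give the pathwise inclusion $\bfC^C_t\subset\widetilde{\bfX}^C_t$ --- an infection arrow just before a recovery is not interchangeable with one just after. So even on the good event the desired domination is not established. Turning the averaging heuristic into a rigorous upper bound really does require controlling the interplay between recoveries and the fine temporal structure of the open periods of each edge, which is precisely what the paper flags as the missing ingredient.
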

The shape of the infection kernel $(\lambda q p_e)_{e\in \cE}$ does have a heuristic explanation. Let us consider a particular edge $e$ and a given infection rate $\lambda$. If the update speed $\hat{v}_{e}$ is chosen significantly larger than the infection rate, i.e.~$\gamma$ large enough, then with high probability there will be an update event between two consecutive infection events, and thus this results heuristically speaking in a thinning of the infection process such that infection events take place at rate $\lambda \hat{p}_e$ as $\gamma\to \infty$. Linker and Remenik made this heuristic rigorous in the proof of \cite[Theorem~2.3]{linker2019contact}. If we could extend their proof to our model we would have shown Conjecture~\ref{LimitFastSpeed}. But several steps of their proof rely heavily on the fact that they only consider graphs with bounded degrees.
Nevertheless we believe that since we additionally assume that $\hat{v}_{\{x,y\}}\to \infty$ as $d(x,y)\to \infty$ it should be possible to make this heuristic argument into a rigorous proof. 

\smallskip
In Theorem~\ref{ImmunThm}, which is analogous to the result \cite[Theorem~2.6(a)]{linker2019contact} in the bounded degree case, we prove the existence of the so called immunization phase if the sequences $(v_e)_{e\in \cE}$ and $(p_e)_{e\in \cE}$ satisfy \eqref{ExistenceAssumption}. This means that for a given $\gamma$ there exists $q_0=q_0(\gamma)\in(0,1]$ such that $\lambda_c(\gamma,q)=\infty$ for all $q <q_0$ and 
$\lambda_c(\gamma,q)<\infty$ for all $q >q_0$.
In other words, if the parameter $q$  of the background dynamics is chosen small enough no survival is possible no matter how large the infection rate $\lambda$ is. Linker and Remenik \cite{linker2019contact} showed for the contact process on a one-dimensional nearest neighbor percolation that a particular threshold value $p_1 \in (0,1)$ exists such that an immunization phase exists if $p<p_1$ and that no such phase exist for $p>p_1$. Thus, by a comparison argument, if there exists an edge $e\in \cE$ such that $p_{e}$ is larger than this threshold $p_1$ then also $q_0(\gamma)<1$.
The value $q_1$ of Corollary~\ref{PartialResultsSlowSpeed} was defined 
as the supremum over all values of $q_0(\gamma)$ such that for all $q>q_1$ we have $\lambda_c(\gamma,q)<\infty$ for all $\gamma>0$. This leads to the following question:
\begin{openproblem}
	For which sequences $(p_e)_{e\in \cE}$ and $(v_e)_{e\in \cE}$  do we have $q_1<1$ so that there is a nontrivial phase transition in the parameter $q$?
\end{openproblem}
It is clear from \cite{linker2019contact} that $q_1=1$ in the nearest neighbor setting, namely if $p_{\{x,y\}}<p_1$ for $d(x,y)=1$ (and equal to zero otherwise). In a truly long range setting ($p_e>0$ for all $e\in \cE$) 
the proof of Theorem~\ref{ImmunThm} suggests that if $\sum_{y\in V\backslash\{x\}}p_{\{x,y\}}$ is small enough it should hold that $q_1=1$.

\smallskip
As a direct consequence of Theorem~\ref{ImmunThm} we are able to characterize the survival behavior for small $q$. To be precise Corollary~\ref{PartialResultsSlowSpeed} yields that the infection will almost surely die out as $\gamma \to 0$, i.e~$\lim_{\gamma\to 0} \lambda_c(\gamma,q)=\infty$ for $q<q_1$. In the special case $V=\Z$ with Theorem~\ref{AsymptoticSlowSpeedThm}, which is analogous to the result \cite[Theorem~2.4(a)]{linker2019contact} in the bounded degree case, see also \cite[Theorem~1.1(i)]{hilario2021results} for higher dimensions,   
we are able to show that $\lim_{\gamma\to 0} \lambda_c(\gamma,q)=\infty$ for all $q<1$ if we sharpen the assumptions on $(v_e)_{e\in\cE}$ and $(p_e)_{e\in\cE}$ from \eqref{ExistenceAssumption} to \eqref{StrongerAssumption}.
As we will see in Section~\ref{SlowSpeedRegime} these sharper assumptions are crucial in the proof, where we use a comparison to a long range percolation model. The assumption $\sum_{y\in \N}yv_{\{0,y\}}p_{\{0,y\}}<\infty$ implies in particular that this long range percolation model has no infinite component. 
Thus, for $\Z$ as well as more general graphs a natural question is what the asymptotic behavior is if we assume that the long range percolation induced by $(qp_{e})_{e\in \cE}$ forms a infinite connected component? In this case, the background process would contain an infinite connected component at any time point $t$. Then we expect that this should imply the possibility of survival for any background speed, i.e. that $\lim_{\gamma\to 0} \lambda_c(\gamma,q)=\infty$ does not hold.

\begin{openproblem}\label{Prob:SurvivalForConnectedDymGraphs}
	Assume that  there exists a $q\in(0,1)$ such that the long range percolation induced by $(q p_{e})_{e\in \cE}$ forms an infinite connected component almost surely and  let  $q^*\in[0,1)$ denote the infimum over all such $q$.
	Do we have    for all $q>q^*$ that
	$\sup\{\lambda_c(\gamma,q'):\gamma \geq 0, q'\in(q,1)\}<\infty$?
\end{openproblem}
In \cite[Theorem 1.1(ii)]{hilario2021results} this was shown in the special case of the  contact process on a dynamical nearest neighbor percolation defined on the $d$-dimensional integer lattice.

\smallskip
Let us mention that the assumption $\sum_{y\in V\backslash \{x\}}v_{\{x,y\}}^{-1}<\infty$ for all $x\in V$ in \eqref{ExistenceAssumption} implies that every edge $\{x,y\}$ attached to an arbitrary vertex $x$ is almost surely updated after a finite time. Heuristically speaking this means that the neighborhood of $x$ is "reset" after a finite time almost surely, i.e.\ all edges attached to $x$ are updated at least once.
This assumption is necessary for the proof strategy of Theorem~\ref{ImmunThm} and Theorem~\ref{AsymptoticSlowSpeedThm}. 
But the assumption also means that the update speed of an edge tends to infinity as the length of the edge grows, i.e.~$v_{\{x,y\}}\to \infty$ as $d(x,y)\to \infty$ while it would seem more natural to assume that $v_e$ is constant for all $e\in \cE$. However, in the case of constant speed we would have to restrict the state space as the CPDLP will not be a well-defined Feller process on the full state space $\cP(V)\times\cP(\cE)$. 
This is because  if we start with every edge $e$ in the state open, i.e.~$\bfB_0=\cE$ the infection process will explode in finite time since any neighborhood will contain infinitely many neighbors almost surely.
Nevertheless one could consider this process with constant speed, i.e.\ $v_e=v$ for all $e$, on a smaller state space. A possible choice for such a state space would be the set of edge configurations which are locally finite.

Finally, one may investigate whether $\lambda_c=0$ may be possible for certain parameter regimes.
For contact processes on graphs with bounded degrees it is easy to see that a subcritical phase exists, i.e.  $\lambda_c>0$, 
which can be proven via a comparison with a continuous time branching process with binary offspring distribution. In general, such a comparison can also be done for contact processes on the complete graph with a summable infection kernel $(a_e)_{e\in \cE}$. The procedure is similar to that used in the proof of Proposition~\ref{lowerBound&integrability} where we use such a comparison to show existence of a non-trivial phase transition for a long range percolation model.

On the other hand if we consider a contact process with constant infection rate $\lambda$ on a locally finite random graph with unbounded degree this kind of a comparison might fail, and in fact such systems do not always exhibit a phase transition. For example, Gomes and Lima manage to show this for their model, see \cite[Theorem~2]{gomes2021long}. They show that if 
the typical radius of the region of vertices which may be infected is 
large enough, then the infection survives for any infection rate $\lambda$.
See also \cite{can2015contact}, \cite{menard2015percolation} and \cite{huang2020contact}, where this type of question is studied for a contact process on  static random graphs. For our model we have the following conjecture:
\begin{conjecture}\label{SubCriticalRegimeConj}
	If $(v_e)_{e\in\cE}$ and $(p_e)_{e\in\cE}$ satisfy \eqref{ExistenceAssumption} then $\lambda_c(\gamma,q)>0$ for all $\gamma>0$ and $q\in[0,1]$. In words, this means that for all choices of $\gamma$ and $q$ there exists a subcritical phase for the infection.
\end{conjecture}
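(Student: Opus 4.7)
The plan is to show $\lambda_c(\gamma,q)\ge r/(q\rho)$ where $\rho:=\sum_{y\neq x}p_{\{x,y\}}$; this quantity is finite under \eqref{ExistenceAssumption}, by Cauchy--Schwarz,
\[
\sum_{y\neq x} p_{\{x,y\}} \le \Bigl(\sum_{y\neq x} v_{\{x,y\}}p_{\{x,y\}}\Bigr)^{1/2}\Bigl(\sum_{y\neq x} v_{\{x,y\}}^{-1}\Bigr)^{1/2}<\infty,
\]
and is independent of $x$ by translation invariance. The goal is an exponential-in-$t$ first-moment bound $\E[|\bfC_t^C|]\le K(\lambda,\gamma,q)\,|C|\,e^{(\lambda q\rho-r)t}$ for any finite $C\subset V$; for $\lambda<r/(q\rho)$ this forces $\E[|\bfC_t^C|]\to 0$ exponentially, and Markov's inequality combined with absorption at $\emptyset$ then yields $\theta(\lambda,\gamma,q,C)=0$, whence $\lambda_c(\gamma,q)\ge r/(q\rho)>0$ for every $q>0$ (the case $q=0$ is trivial since every edge is permanently closed).

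The first-moment bound would proceed via the graphical representation of Section~\ref{GraphRepCPDLP}: the event $\{x\in\bfC_t^C\}$ is contained in the existence of an active infection path $C\ni y_0\to y_1\to\cdots\to y_k=x$ with arrow times $0<t_1<\cdots<t_k\le t$, where at each $t_i$ an infection arrow fires on $\{y_{i-1},y_i\}$ (which requires $\{y_{i-1},y_i\}\in\bfB_{t_i}$) and no recovery marks interrupt the path. Because the total vertex-occupation time along any path equals $t$ (with disjoint intervals at revisited vertices), the no-recovery probability telescopes to exactly $e^{-rt}$. Independence of the Poisson arrow and recovery processes from the autonomous background, combined with Campbell's formula and the bound $\1_{\{\exists\,\mathrm{path}\}}\le \#\mathrm{paths}$, yields
\[
\E[|\bfC_t^C|]\le |C|\,e^{-rt}\sum_{k\ge 0}\lambda^k \int_{0<t_1<\cdots<t_k<t}\sum_{y_1,\ldots,y_k}\Pw\bigl(\{y_{i-1},y_i\}\in\bfB_{t_i}\ \forall i\bigr)\,dt_1\cdots dt_k.
\]
For vertex sequences traversing every edge at most once, distinct edges are independent in $\bfB$, so the probability factors as $\prod_i \hat{p}_{\{y_{i-1},y_i\}}=q^k\prod_i p_{\{y_{i-1},y_i\}}$; summing over vertex sequences via translation invariance gives at most $(q\rho)^k$, and integrating over the time-simplex gives $(\lambda q\rho t)^k/k!$, yielding the main term $e^{(\lambda q\rho-r)t}$.

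The principal obstacle is the control of paths that use some edge $e$ multiple times. Here the background has positive time-correlations:
\[
\Pw(e\in\bfB_{\tau_1},\ldots,e\in\bfB_{\tau_m})=\hat{p}_e\prod_{j=1}^{m-1}\bigl(\hat{p}_e+(1-\hat{p}_e)e^{-\hat{v}_e(\tau_{j+1}-\tau_j)}\bigr),
\]
which exceeds the independent product $\hat{p}_e^m$. My approach is to split each factor into its stationary part $\hat{p}_e$ and a transient part $(1-\hat{p}_e)e^{-\hat{v}_e\Delta}$: expanding the product and integrating the gap variables, each transient factor contributes a correction of order $\hat{v}_e^{-1}$, and the overall error is controlled by the summability $\sum_e \hat{v}_e^{-1}=\gamma^{-1}\sum_e v_e^{-1}<\infty$, which is the second half of \eqref{ExistenceAssumption}. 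An equivalent route is to reorganize the expansion via open bursts of each edge: each maximal open interval of $e$ is treated as a single potential infection event, at rate $\hat{v}_e\hat{p}_e(1-\hat{p}_e)\cdot \lambda/(\lambda+\hat{v}_e(1-\hat{p}_e))\le \lambda\hat{p}_e$, so that the CPDLP is dominated by a multi-type branching process whose mean offspring matrix has row sums bounded by $\lambda q\rho/r$. Either realization should produce the required multiplicative constant $K(\lambda,\gamma,q)$ and close the argument. The hardest step is the rigorous execution of this repeated-edge book-keeping: a naive stochastic domination of the CPDLP by a contact process with the deterministic kernel $(\lambda q p_e)_{e\in\cE}$ is not available in general, and it is precisely at this step that the unbounded-degree long-range setting is delicate.
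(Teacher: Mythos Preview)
The statement you are trying to prove is listed in the paper as a \emph{conjecture}; the authors provide no proof and explicitly leave it open, offering only a heuristic pointer to the \emph{wait-and-see} process and supermartingale techniques of Jacob, Linker and M\"orters~\cite{jacob2022contact} as a possible route. So there is no proof in the paper to compare your attempt against.

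Your proposal is a sketch, and you yourself flag the gap: the control of paths that traverse an edge repeatedly. This is not a technicality but the heart of the matter, and neither of your two suggested fixes closes it. In the ``stationary plus transient'' expansion the correction terms $(1-\hat p_e)e^{-\hat v_e\Delta}$ have to be summed over all multiplicities of re-use of every edge along every path; the bare assertion that $\sum_e v_e^{-1}<\infty$ controls this is not justified. In the ``open bursts'' picture a single burst can carry arbitrarily many infection arrows in both directions, so a burst is not a single branching event; your rate formula captures only the first arrow per burst. More seriously, the specific target bound $\E[|\bfC^C_t|]\le K(\lambda,\gamma,q)\,|C|\,e^{(\lambda q\rho-r)t}$ is too strong: already for a two-vertex system with a single slow edge the long-time decay rate of the first moment is governed by the spectral gap of the full (edge plus infection) chain, and as $\hat v_e\to 0$ this gap tends to the gap of the two-vertex contact process with infection rate $\lambda$, namely $r-\lambda+O(\lambda^2)$, which is strictly smaller than $r-\lambda\hat p_e$ for every $\hat p_e<1$ and small $\lambda>0$. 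Hence for small $\gamma$ no constant $K(\gamma)$ can rescue the claimed exponent. At best one can hope for a $\gamma$-dependent exponent, which would still give $\lambda_c(\gamma,q)>0$ but not the uniform bound $r/(q\rho)$ you aim for. In short, your outline identifies the right obstacle but does not overcome it, and the quantitative form of your target inequality is in general false.
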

Our intuition comes from a recent work by
Jacob, Mörters and Linker who consider in \cite{jacob2022contact} a related setting to ours but on finite graphs of size $N$. In \cite[Section~5]{jacob2022contact} they define an auxiliary process which they call the \emph{wait-and-see} process which dominates the infection process, and they show with a supermartingale argument that under some conditions the process has a fast extinction regime if the infection rate $\lambda$ is small enough, i.e.\ the extinction time of the infection process is bounded by some power of $\log(N)$. We believe that these techniques can be adjusted to infinite graphs in such a way that they would imply Conjecture~\ref{SubCriticalRegimeConj}.

\section{Long range percolation model}\label{LongRangePercolationModel}
Several proofs rely on  a comparison argument with a long range percolation model. Thus, in this section we briefly introduce this model and show two results concerning the absence of an infinite connected component. One of the first papers to mention a long range percolation model is by Schulman \cite{schulman1983long}. There, Proposition~\ref{lowerBound&integrability} and Proposition~\ref{CutPointTheo} are shown in special cases. Since we could not find these results in the literature in the generality that we need, we prove these results in this section, for the sake of completeness.

The long range percolation model $w$ takes values in 
$\bigotimes_{e\in \cE}\{0,1\}$ such that $w=(w(e))_{e\in \cE}$ is a family of independent random variables with  $\Pw(w(e)=1)= b_e\in[0,1]$ for all $e\in \cE$.
We declare an edge $e=\{x,y\}\in \cE$ to be open if $w(e)=1$. 
We assume for every fixed $x\in V$ that $\sum_{y\in V\backslash\{x\}}b_{\{x,y\}}<\infty$ to guarantee that $\big(V,w^{-1}(\{1\})\big)$  is a locally finite graph, where $w^{-1}(\{1\})=\{e\in \cE:w(e)=1\}$. Furthermore, we again assume translation invariance, i.e. that $b_{\{x,y\}}=b_{\{x',y'\}}$ if $d(x,y)=d(x',y')$, where $d(\cdot,\cdot)$ is the graph distance induced by $G=(V,E)$. We denote by $\cC(x)$ the connected component of $w$ containing $x\in V.$ The following result provides a sufficient condition for absence of percolation.
\begin{proposition}\label{lowerBound&integrability}
	Let $\sum_{y\in V\backslash\{x\}}b_{\{x,y\}}<1$ for one, and hence every $x\in V$. Then almost surely there exists no infinite connected component. In this case $|\cC(x)|$ is also integrable for all $x\in V$.
\end{proposition}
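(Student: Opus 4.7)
The plan is to dominate the cluster exploration by a subcritical Galton--Watson branching process. Write $m := \sum_{y \in V\setminus\{x\}} b_{\{x,y\}}$, which by translation invariance is independent of $x$ and satisfies $m < 1$ by hypothesis.

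First I would perform a breadth--first exploration of $\cC(x)$: let $A_0 = \{x\}$, and at step $n$ choose an unexplored vertex $v \in A_n$ and reveal the status of every edge $\{v,y\}$ with $y \notin \bigcup_{k \leq n} A_k$. Each such edge is open independently with probability $b_{\{v,y\}}$, so the number of newly discovered neighbors of $v$ is stochastically dominated by $Z_v := \sum_{y \in V \setminus \{v\}} w(\{v,y\})$, a sum of independent Bernoulli variables with $\E[Z_v] = m$. Note that $Z_v$ is almost surely finite because $\sum_y b_{\{v,y\}} < \infty$ implies by Borel--Cantelli that only finitely many such edges are open. Moreover, the variables $(Z_v)_{v \in V}$ can be realised independently across distinct exploration steps by using disjoint blocks of edges, which is what gives the branching--process coupling.

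Consequently $|\cC(x)|$ is stochastically dominated by the total progeny $T$ of a Galton--Watson process with offspring distribution equal in law to $Z_v$, having mean $m < 1$. Since this branching process is subcritical, $T < \infty$ almost surely with $\E[T] = 1/(1-m) < \infty$. Therefore $|\cC(x)| < \infty$ almost surely and $\E[|\cC(x)|] \leq 1/(1-m) < \infty$, which proves both absence of an infinite connected component and integrability.

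The main subtlety I expect is making the coupling between cluster exploration and the branching process rigorous, because the offspring distribution at each vertex is an infinite sum of Bernoulli variables, and during exploration one must not count edges leading to already explored vertices. Both issues are handled by the usual monotonicity argument: ignoring the "already explored" restriction only increases the number of children in the exploration, so we obtain a valid stochastic upper bound by a branching process whose offspring is distributed as $Z_v$; and the finiteness of $Z_v$ (hence the measurability of the whole construction) follows from assumption \eqref{RateBound}-type summability already built into the hypothesis $\sum_y b_{\{x,y\}} < 1 < \infty$.
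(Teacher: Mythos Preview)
Your proposal is correct and follows essentially the same strategy as the paper: both dominate $|\cC(x)|$ by the total progeny of a subcritical Galton--Watson process with offspring mean $m=\sum_{y\neq x} b_{\{x,y\}}<1$, yielding $\E[|\cC(x)|]\leq 1/(1-m)$. The paper's presentation differs only cosmetically---it builds the branching tree explicitly over the space of paths from $x$ and introduces independent copies $w^{\alpha}$ of the percolation configuration for edges already revealed---whereas you phrase the identical monotonicity coupling via a breadth-first exploration.
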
  
\begin{proof}
	This can be proven via a coupling with a branching process. Since $V$ is countable we can enumerate all vertices such that $V=\{x_0,x_1,\dots\}$.  
	We denote the set of all paths of length $n$ starting at $x_0$ by 
	\begin{equation*}
		\cT:=\{(\alpha_0,\alpha_1,\dots,\alpha_n)\in V^{n+1}: n\in\N_0, \alpha_0=x_0 \text{ and } \alpha_{i-1}\neq \alpha_{i} \text{ for all } 0<i\leq n  \}.
	\end{equation*}
	For $\alpha=(\alpha_0,\alpha_1,\dots,\alpha_n)$ we define the generation of $\alpha$ as $|\alpha|=n$ (so that $|(x_0)|=0$). Furthermore, we equip $\cT$ with the lexicographical order with respect to the enumeration of $V$.
	
	Now we construct a family of random variables $(X_{\alpha})_{\alpha\in \cT}$ with $X_{\alpha}\in \{0,1\}$. We set  $X_{(x_0)}=1$ and define the remaining $X_{\alpha}$ successively with increasing $|\alpha|$. 
	When defining $X_{\alpha}$ for a given $|\alpha|$ we proceed in lexicographical order, which also means that we 
	"visit" the parents with $|\alpha|-1$ in lexicographical order to define the values for their children.
	We want $Z_{n}:=\sum_{\alpha\in \cT:|\alpha|= n}X_{\alpha}$ to define a branching process with $Z_0=1$. We also want to ensure that for any $x \in \cC(x_0)$ there is an $n \in \N_0$ and $\alpha\in \cT$ with $|\alpha|=n$
	and $\alpha_n=x$ such that $X_{\alpha}=1,$  which implies that $|\cC(x_0)|\leq \sum_{n\in \N}Z_n:=T$, where $T$ is the total progeny of the branching process. (However, an $x \in \cC(x_0)$ can appear multiple times in the 
	branching tree.)
	
	As part of the construction we also successively define increasing sets $I_{\alpha}\subset \cE$ starting with 
	$I_{(x_0)}= \emptyset$. These sets contain all the edges that we have used in the definitions prior to determining $X_{\alpha}$.
	More precisely, suppose we have already constructed all $X_{\alpha'}$ before now defining $X_{\alpha}$ with $\alpha=(\alpha_0,\alpha_1,\dots,\alpha_n).$
	Then, $I_{\alpha}$ contains all edges $\{y,z\}\in \cE$ for which there exists an $\alpha'\in \cT$ with $X_{\alpha'}=1$ such that $|\alpha'|=k< n-1$ and $\alpha'_{k}=y$ or such that $|\alpha'|= n-1$,  $\alpha'_{n-1}=y$ and $\alpha'$ smaller than $(\alpha_0,\alpha_1,\dots,\alpha_{n-1})$ in lexicographical order. 
	We let $w^{\alpha}$ be an independent copy of $w$ for all $\alpha \in \cT$.
	Now, we define $X_{\alpha}$ by
	\begin{align*}
		X_{\alpha}:=
		\begin{cases}
			1 & \text{ if } \{\alpha_{n-1},\alpha_n\}\notin I_{\alpha},\, w(\{\alpha_{n-1},\alpha_n\})=1 \text{ and } X_{(\alpha_0,\dots,\alpha_{n-1})}=1,\\
			1 & \text{ if } \{\alpha_{n-1},\alpha_n\}\in I_{\alpha},\, w^{\alpha}(\{\alpha_{n-1},\alpha_n\})=1 \text{ and } X_{(\alpha_0,\dots,\alpha_{n-1})}=1,\\
			0 & \text{ otherwise. }
		\end{cases}
	\end{align*} 
	This in particular implies that $X_{\alpha'}=0$ for any descendant of an $\alpha$ with $X_{\alpha}=0.$ Also, $X_{\alpha}=1$ is only possible if $\alpha_{|\alpha|} \in \cC(x_0)$. On the other hand, for any 
	$x  \in \cC(x_0)$ there will be a $X_{\alpha}=1$ with $\alpha_{|\alpha|}=x.$
	In order to get independence between different generations and between the several offspring of the same generation we used independent copies $w^{\alpha}$ instead of $w$. 
	In words if we have that $X_{(\alpha_0,\dots,\alpha_{n-1})}=1$ and we have not used $w(\{\alpha_{n-1},\alpha_n\})$ yet 
	($\{\alpha_{n-1},\alpha_n\} \notin I_{\alpha}$) then we set $X_{\alpha}=1$ if $w(\{x_m,\alpha_n\})=1$, otherwise we use the 
	independent $w^{\alpha}(\{\alpha_{n-1},\alpha_n\})$.
	This is why $Z=(Z_n)_{n\in \N_0}$ with $Z_{n}:=\sum_{\alpha\in \cT:|\alpha|= n}X_{\alpha}$ now defines a branching process for which the offspring distribution is the same in every step because of translation invariance. 
	In particular, the offspring mean is given by $\mu:=\sum_{y\in V\backslash\{x\}}b_{\{x,y\}}$.

	It is well known that for $\mu<1$ the branching process dies out almost surely which provides the first claim. It also holds that $\E[T]\leq\frac{1}{1-\mu}$ for $\mu<1$ as for example shown in \cite[Theorem 3.5]{van2016random}, which provides integrability of $|\cC(x_0)|$. Because of translation invariance this result does not depend on the choice of $x_0$ since $|\cC(x_0)|\stackrel{d}{=}|\cC(y)|$ for all $y\in V$. 
\end{proof}
Next we consider the special case $V=\Z$ and $E=\{\{x,y\}\subset \Z: |x-y|=1\}$. Since we assume translation invariance we can simplify notation and set $b_{\{n,n+k\}}=b_{\{0,k\}}=:b_k$ for all $k\in \N$ and all $n\in \Z$. Here, an infinite component can only exist if $\sum_{k\in \N} kb_k=\infty$. The reason for this is that if $\sum_{k\in \N} kb_k<\infty$ then the long range percolation is similar to a finite range percolation in the sense that there appear so-called ``cut-points", see Figure~\ref{fig:CutPoint}, which lead to a partition of the integer lattice $\Z$ into finite connected components. We will briefly show this result for the long range percolation before we continue with our study of the CPDLP.

\begin{definition}\label{CutPoint}
	Let $V=\Z$. A \textit{cut-point} $m\in \Z$ is a point such that no edge $\{x,y\}$ with $x\leq m<y$ is present in the model, i.e.~$\omega(\{x,y\})=0$.
\end{definition}
\begin{figure}[!b]
	\centering 
	\includegraphics[width=90mm]{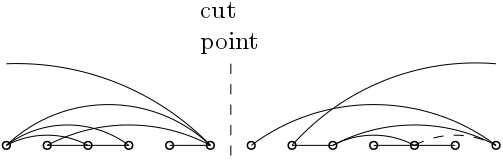}
	\caption{Illustration of a cut point.}
	\label{fig:CutPoint}
\end{figure}
In the proof of the following result ergodic theory is used. We give a brief summary of some of the important notions.
Let $(\Omega,\cF,\Pw)$ be a probability space and $S:\Omega\mapsto\Omega$ be a measure-preserving map, i.e.~$\Pw^{S}=\Pw$. We denote by $\cI = \{A \in  \cF : A=S^{-1}(A) \}$ the invariant $\sigma$-algebra. We call 
$(\Omega,\cF,\Pw,S)$ an ergodic system if $\cI$ is $\Pw$-trivial, i.e.~if $A\in \cI$, then $\Pw(A)\in \{0,1\}$. Let $X$ be the identity on $\Omega$, i.e.~$X(\omega)=\omega$ and $f:\Omega\to\R$ be a measurable function. The mean ergodic theorem of Birkhoff, see \cite[Theorem~9.6]{kallenberg2006foundations}, in particular states that if $(\Omega,\cF,\Pw, S)$ 
is ergodic then
\begin{align}
	\label{Birkhoff}
	\frac{1}{2n}\sum_{k=-n}^{n}f(S^{-k}X)\to \E[f(X)]\quad \text{ as } n\to\infty.
\end{align}

\begin{proposition}\label{CutPointTheo}
	Let $(b_k)_{k\in \N}\subset [0,1)$ with $\sum_{k\in \N} kb_k<\infty$. Then the following holds:
	\begin{enumerate}
		\item For $m\in \Z$ the probability $\Pw(m \text{ is a cut-point})=\Pw(0 \text{ is a cut-point})>0$, and as a consequence there exist almost surely infinitely many cut-points.
		\item The subgraphs induced in the intervals between consecutive cut-points are independent and identically distributed. In particular, this implies that the distances between consecutive cut-points form a sequence of i.i.d.\ random variables as well.
		\item Almost surely there exists no infinite connected component.
	\end{enumerate}
\end{proposition}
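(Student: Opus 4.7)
The plan is to handle the three items in sequence, with (i) doing most of the work and (ii), (iii) following fairly directly.

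For (i), translation invariance yields $\Pw(m \text{ is a cut-point})=\Pw(0 \text{ is a cut-point})$, and by independence of $(w(e))_{e\in \cE}$,
\begin{equation*}
\Pw(0 \text{ is a cut-point}) = \prod_{x \leq 0,\, y \geq 1}\bigl(1 - b_{y-x}\bigr) = \prod_{k=1}^{\infty}(1-b_k)^{k},
\end{equation*}
after grouping pairs by edge length, since for each $k\geq 1$ there are exactly $k$ pairs $(x,y)$ with $x\leq 0<y$ and $y-x=k$. Each $b_k<1$, and $\sum_k kb_k<\infty$ forces $b_k\to 0$, so the comparison $-\log(1-b_k)\sim b_k$ for large $k$ gives $\sum_k k|\log(1-b_k)|<\infty$ and the product is strictly positive. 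For the infinitude of cut-points, I apply the ergodic theorem \eqref{Birkhoff} on the space $(\{0,1\}^{\cE},\Pw)$ equipped with the shift $(Sw)(\{x,y\}):=w(\{x-1,y-1\})$, which is ergodic because the edge variables are independent with a shift-invariant product law. Setting $f(w)=\1_{\{0 \text{ is a cut-point for } w\}}$ gives
\begin{equation*}
\frac{1}{2n+1}\sum_{k=-n}^{n}\1_{\{k \text{ is a cut-point}\}}\;\longrightarrow\;\Pw(0 \text{ is a cut-point})>0 \quad \text{a.s.},
\end{equation*}
so the set of cut-points is almost surely infinite in both directions.

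For (ii) I would use a regeneration argument at cut-points. For $m\in\Z$ partition $\cE$ into $\cE_-(m)=\{e\subset(-\infty,m]\}$, $\cE_0(m)=\{\{x,y\}:x\leq m<y\}$ and $\cE_+(m)=\{e\subset(m,\infty)\}$. Conditioning on $\{m \text{ is a cut-point}\}$ sets every $w(e)$ with $e\in\cE_0(m)$ to $0$ and leaves $(w(e))_{e\in\cE_-(m)}$ and $(w(e))_{e\in\cE_+(m)}$ mutually independent with their unconditional (product) marginals. Enumerate the cut-points in increasing order as $\ldots<M_{-1}<M_0<M_1<\ldots$. Observe that whether $m'>M_i$ is a cut-point depends only on $(w(e))_{e\in\cE_+(M_i)}$, since all crossing edges of $m'$ with one endpoint $\leq M_i$ are already forced to be $0$; in particular, both $M_{i+1}$ and the block subgraph $G_i$ on $\{M_i+1,\ldots,M_{i+1}\}$ are functionals of $(w(e))_{e\in\cE_+(M_i)}$ that only involve edges with at least one endpoint in $(M_i,M_{i+1}]$. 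Applying the splitting argument successively at each $M_i$ and invoking the translation invariance of the product law then yields that $(G_i)_{i\geq 1}$ is i.i.d., each distributed as the first block to the right of a typical cut-point; the gaps $M_{i+1}-M_i$ are functionals of $G_i$ and are therefore i.i.d.\ as well. Part (iii) is then immediate: fix $x\in\Z$; by (i) there exist cut-points $m_1<x\leq m_2$ almost surely; any edge $\{x,y\}$ with $y\leq m_1$ or $y>m_2$ crosses $m_1$ or $m_2$ respectively and is hence closed, so the connected component of $x$ is contained in the finite set $\{m_1+1,\ldots,m_2\}$.

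The most delicate step is the regeneration argument in (ii): the qualifier \enquote{consecutive} means that $G_i$ is also constrained by the non-existence of interior cut-points in $(M_i,M_{i+1})$, a condition involving internal edges of the block. The key point making the argument work is that this constraint is nevertheless entirely local to the edges with at least one endpoint in $(M_i,M_{i+1}]$, so the subsequent process on the edges with both endpoints $>M_{i+1}$ remains an independent translated copy of the original, and the induction step passes through.
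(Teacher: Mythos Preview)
Your proof is correct and follows essentially the same route as the paper: explicit computation of the cut-point probability via the product formula and $\sum_k kb_k<\infty$, Birkhoff's ergodic theorem for the infinitude of cut-points, and independence of edge families on disjoint intervals for parts (ii) and (iii). Your treatment of (ii) is in fact more careful than the paper's, which simply calls the i.i.d.\ block structure ``immediate''; your explicit regeneration argument correctly identifies that the determination of $M_{i+1}$ and $G_i$ uses only edges with at least one endpoint in $(M_i,M_{i+1}]$, leaving the edges with both endpoints beyond $M_{i+1}$ as a fresh independent copy.
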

\begin{proof}
	By translation invariance we know that
	\begin{equation*}
		\Pw(m \text{ is a cut-point})=\Pw(0 \text{ is a cut-point})=\prod_{x\leq 0<y} (1-b_{\{x,y\}}).
	\end{equation*}
	The infinite product on the right hand side is strictly positive since 
	\begin{equation*}
		\sum_{x\leq 0<y} b_{\{x,y\}}=\sum_{k=1}^{\infty} \sum_{l=0}^{k-1} b_{\{-l,-l+k\}} =\sum_{l\in \N}k b_{k}<\infty,
	\end{equation*}
	where we used that $b_{\{-l,-l+k\}}=b_{k}$ for every $l\in \Z$. Thus, this yields the first claim. Next let us define $X_m:=\1_{\{m \text{ is a cut-point}\}}$. Let $S$ be a shift operator on $\Omega=\bigotimes_{e\in \cE}\{0,1\}$  such that
	\begin{equation*}
		(\omega(\{x,y\}))_{\{x,y\}\in \cE}\mapsto(\omega(\{x+1,y+1\}))_{\{x,y\}\in \cE}.
	\end{equation*}
	In words we shift all edges by one vertex to the right. Since we endow $\Omega$ with the probability measure $\Pw$ such that  $(\omega(e))_{e\in \cE}$ is a family of independent random variables it is clear that $(\Omega, \cF,\Pw,S)$ is ergodic. Also, it is not difficult to see that $X_k=f(S^{-k}\omega)$ for all $k\in \Z$ for a measurable function $f:\Omega\to\{0,1\}$. Then by Birkhoff's mean ergodic theorem in (\ref{Birkhoff}) it follows that
	\begin{equation*}
		\frac{1}{2n}\sum_{k=-n}^{n}X_k\to \E[X_0]=\Pw(0 \text{ is a cut-point})>0
	\end{equation*}
	almost surely. This implies that infinitely many $X_k$ are equal to $1$ almost surely. The second statement is immediate since there are no edges between disjoint intervals whose boundary points are given by consecutive cut-points, and the edges contained in those intervals are independent. This also means that with probability $1$ there cannot exist an infinitely large component.
\end{proof}
\section{Construction of the CPDLP via a graphical representation}\label{GraphRepCPDLP}
In this section we formally construct the CPDLP via a graphical representation.
First let us define the dynamical percolation. We denote by $\Delta^{\text{op}}=(\Delta^{\text{op}}_e)_{e\in \cE}$ and $\Delta^{\text{cl}}=(\Delta^{\text{cl}}_e)_{e\in \cE}$ two independent families of Poisson point processes on $\R$ such that $\Delta^{\text{op}}_e$ has rate $\hat{v}_{e}\hat{p}_{e}$ and $\Delta^{\text{cl}}_e$ has rate $\hat{v}_{e}(1-\hat{p}_{e})$.
We  define for each edge $e \in  \cE$ a two state Markov process $X_t(e)$ on the state space $\{0,1\}$. Assume that the initial state is $0$, i.e.~$X_0(e)=0$. Set $T_0(e):=0$ and define recursively
\begin{align*}
	T_{2n+1}(e)& :=\inf\{t>T_{2n}: t \in \Delta^{\text{op}}_e\},\\
	T_{2n+2}(e)& :=\inf\{t>T_{2n+1}: t \in \Delta^{\text{cl}}_e\}.
\end{align*}
for any $n\in \N_0$. Then we set $X_t(e)=0$ if $t\in [T_{2n},T_{2n+1})$ and $X_t(e)=1$ if $t\in [T_{2n+1},T_{2n+2})$ for some $n \in \N_0$. If the initial state is $1$, i.e.~$X_0(e)=1$, then we can just interchange the two Poisson point processes in the definition of the stopping times $T_n(e)$. Since the Poisson point processes are independent if $e\neq e'$ we get that $X(e)$ and $X(e')$ are independent as well. 
By construction we see that $X(e)$ has the transitions
\begin{align*}
	0&\to 1 \quad \text{ at rate } \hat{v}_{e}\hat{p}_{e}\text{ and }\\
	1&\to 0	\quad \text{ at rate } \hat{v}_{e}(1-\hat{p}_{e}).
\end{align*}
Note that the stationary distribution of $X(e)$ is a Bernoulli distribution with parameter $\hat{p}_{e}$. 
Now we define $\bfB_t:=\{e\in E: X_t(e)=1\}$, which is a Feller process with state space $\cP(\cE)$
and transitions as in (\ref{DymLongRangePer}).
We add the set $B\subset\cE$ of all initially open edges to indicate the initial state, i.e. $\bfB^{B}_{0}=B$.
In most cases we choose the invariant distribution $\pi$ as initial distribution, i.e.~$\Pw(e\in \bfB_0)=\hat{p}_e$ for any $e$. In this case we omit the superscript.

Next we define the infection process $\bfC$. Let $\Delta^{\inf}=(\Delta^{\inf}_{e})_{e\in \cE}$ and $\Delta^{\text{rec}}=(\Delta^{\text{rec}}_x)_{x\in V}$  be two independent families of Poisson point processes on $\R$ such that for all $e\in \cE, x \in V$ fixed $\Delta^{\inf}_{e}$ has rate $\lambda$ and $\Delta^{\text{rec}}_x$ has rate $r$ and the processes are independent. From here on we intepret $\Delta^{\inf}$ and respectively $\Delta^{\text{rec}}$ as Poisson random sets on $\cE\times \R$ and respectively on $V\times \R$ for the sake of a more transparent notation. We call $(e,t)\in \Delta^{\inf}$ an infection event and $(x,t)\in \Delta^{\text{rec}}$ a recovery event. Next we need to introduce the notion of an infection path.
\begin{definition}
	\label{InfectionPath}
	Let  $(y, s)$ and $(x, t)$ with $s < t$ be two space-time points and  $\bfB$ a background process. We say that there is a \textit{$\bfB$-infection path} from $(y, s)$ to $(x, t)$ if there is a sequence
	of times $s = t_0 < t_1 < \dots < t_n \leq t_{n+1} = t$ and space points $y = x_0,x_1,\dots, x_n = x$ such that
	$(\{x_{k-1},x_{k}\},t_k)\in \Delta^{\inf}$ as well as  $\{x_{k-1},x_{k}\}\in \bfB_{t_k}$ for all $k \in \{ 1, \dots , n\}$ 
	and  $\Delta^{\text{rec}}\cap\big(\{x_k\}\times[t_k , t_{k+1} )\big)=\emptyset$ for all $k \in \{ 0, \dots , n\}$. We write $(y, s)\stackrel{\bfB}{\longrightarrow} (x, t)$ if there exists a $\bfB$-infection path. 
\end{definition}
Now in order to define the infection process $\bfC^{C,B}$ we choose a background process $\bfB^{B}$ with $\bfB^{B}_0=B$ and set $\bfC_0^{C,B}:=C\subset V$ as well as
\begin{align}\label{DefinitionCPDLP}
	\bfC_t^{C,B}:=\{x\in V:\exists y\in C \text{ such that } (y,0)\stackrel{\bfB^{B}}{\longrightarrow} (x,t) \} \quad \text{ for } t > 0.
\end{align}
Again, if the background is stationary we write simply $(\bfC^{C},\bfB)$. See Figure~\ref{fig:GraphicalReperesentation} for an illustration of the graphical representation of the infection process $\bfC$.
\begin{figure}[!b]
	\centering 
	\includegraphics[width=70mm]{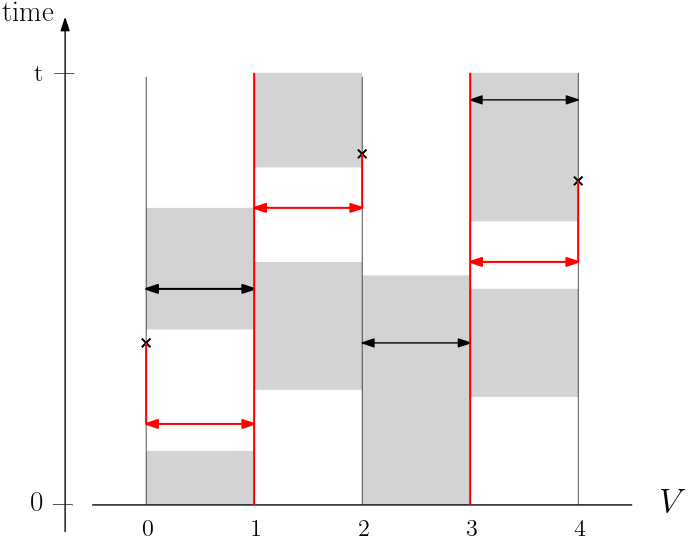}
	\caption{The arrows indicate infection events and crosses indicate recovery events. The grey areas indicate closed edges and red lines indicate infection paths. Note that for the sake of simplicity we illustrated this only for a nearest neighbor structure.}
	\label{fig:GraphicalReperesentation}
\end{figure}
\begin{remark}
	We use infection events which transmit infection from $x$ to $y$ as well as from $y$ to $x$ as indicated by the double headed arrows. In the literature it is also common to use oriented infection arrows instead, which only transmit from $x$ to $y$. It is easy to see that both constructions yield the same dynamics. In general, there is no unique graphical representation, and thus there exists multiple ways to construct the same process.
\end{remark}
\begin{remark}\label{BasicProperties}
	As for the standard contact process the graphical representation  can be used to show that the CPDLP is \emph{monotone increasing} with respect to the initial conditions, i.e.~if $C\subset C'$ and $B\subset B'$ we have for suitably coupled processes $\bfC_t^{C,B}\subset \bfC_t^{C',B'}$ and $\bfB^B_t \subset \bfB_t^{B'}$ for all $t\geq0$. Furthermore, the process is also monotone increasing with respect to the infection rate $\lambda$ and the parameter $q$. Finally, the process is additive with respect to the initially infected vertices, i.e.~$\bfC_t^{C\cup C',B}=\bfC_t^{C,B}\cup \bfC_t^{C',B}$ for all $t\geq 0$ when we are using the same Poisson point processes.
\end{remark}
By Remark~\ref{BasicProperties} it is clear that
for infection kernels of the from (\ref{scaledkernels})
the critical infection rate $\lambda_c(\gamma,q)$
is non-increasing in $q$. This is not clear at all for $\gamma$. But via the graphical representation we can conclude that at least the function $\gamma \mapsto \gamma^{-1}\lambda_c(\gamma,q)$ is non-increasing. This means that the critical infection rate $\lambda_c(\gamma,q)$ can at most increase linearly with respect to $\gamma$, as can be proven analogously to  \cite[Proposition~2.2]{linker2019contact}.
\begin{proposition}\label{WeakSpeedMonotonicity}
	Let $q\in(0,1)$. The function $\gamma \mapsto \gamma^{-1}\lambda_c(\gamma,q)$ is monotone non-increasing.
\end{proposition}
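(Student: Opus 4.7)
The strategy is the time-rescaling argument from \cite[Proposition~2.2]{linker2019contact}, adapted to the graphical representation constructed in this section. Fix the recovery rate $r>0$ (which is suppressed in $\lambda_c$), fix $q\in(0,1)$, and let $c\geq 1$. I want to show that survival at $(\lambda,r,\gamma,q)$ implies survival at $(c\lambda,r,c\gamma,q)$. Once established, applying this for every $\lambda>\lambda_c(\gamma,q)$ yields $\lambda_c(c\gamma,q)\leq c\,\lambda_c(\gamma,q)$, which is equivalent to the claim.

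The first step is a time-change observation. Any CPDLP with parameters $(\tilde\lambda,\tilde r,\tilde\gamma,q)$ is built from Poisson point processes on $\R$ of rates $\tilde\lambda$, $\tilde r$, $\tilde\gamma\,v_e\,qp_e$, and $\tilde\gamma\,v_e(1-qp_e)$. Speeding up time by the factor $c$, i.e.~replacing $t$ by $ct$, divides every rate by $c$. Applying this to the parameter choice $(c\lambda,r,c\gamma,q)$ shows that, up to the deterministic time change, this CPDLP has the same law as the CPDLP with parameters $(\lambda,r/c,\gamma,q)$. Since survival is a property of the event $\{\bfC_t\neq\emptyset\text{ for all }t\geq 0\}$, which is invariant under time rescaling, survival of the $(c\lambda,r,c\gamma,q)$-CPDLP is equivalent to survival of the $(\lambda,r/c,\gamma,q)$-CPDLP.

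The second step is monotonicity in the recovery rate. On the same probability space carrying the infection and background Poisson processes, I would realise the recovery process $\Delta^{\mathrm{rec}}$ at rate $r$ and obtain a recovery process at rate $r/c\leq r$ by independent thinning (retaining each point with probability $1/c$). Under this coupling, every $\bfB^B$-infection path in the sense of Definition~\ref{InfectionPath} that avoids all rate-$r$ recovery marks also avoids the sparser rate-$r/c$ marks, so the infection set grows when $r$ decreases; in particular $\bfC^{C,B}_t$ with rate $r$ is contained in $\bfC^{C,B}_t$ with rate $r/c$ for all $t\geq 0$. Consequently survival at $(\lambda,r,\gamma,q)$ implies survival at $(\lambda,r/c,\gamma,q)$, and combining with the first step yields survival at $(c\lambda,r,c\gamma,q)$.

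Putting these together, for any $c\geq 1$ and any $\lambda>\lambda_c(\gamma,q)$ one has $c\lambda\geq\lambda_c(c\gamma,q)$, hence $\lambda_c(c\gamma,q)\leq c\,\lambda_c(\gamma,q)$, i.e.~$(c\gamma)^{-1}\lambda_c(c\gamma,q)\leq\gamma^{-1}\lambda_c(\gamma,q)$. Since $c\geq 1$ was arbitrary, $\gamma\mapsto\gamma^{-1}\lambda_c(\gamma,q)$ is monotone non-increasing. There is no serious obstacle: both ingredients are elementary consequences of the graphical representation, and the only point to be careful about is tracking how the recovery rate $r$ behaves under the rescaling, since it is suppressed in the notation but genuinely participates in the argument.
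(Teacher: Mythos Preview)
Your argument is correct and is precisely the time-rescaling plus recovery-thinning proof of \cite[Proposition~2.2]{linker2019contact} that the paper itself invokes (the paper gives no independent proof but simply cites that reference). There is nothing to add.
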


\subsection{The CPDLP is a Feller process}\label{Welldefined&Feller}
By definition it is not clear yet if the CPDLP is a well-defined Feller process. For example it is not clear if it might occur at some time $t$ that a vertex $x$ is connected to infinitely many other vertices via open edges. Since along open edges we consider a constant infection rate $\lambda$ this would lead to an infinite transition rate if $x$ is infected. Furthermore, it is also not clear that if we start with finitely many infected vertices that the set of all infected vertices stays finite for the whole time.

Thus, in this section we show that if we assume \eqref{ExistenceAssumption} the CPDLP is a well-defined Feller process. 
First, we will show that the set of all infections stays finite for all $t>0$ (in expectation and thus almost surely) if we started with a finite number of infected vertices $C$. The same is true for 
the set of vertices that may influence a finite set of vertices at a later time.

To this end we build a CPDLP $(\prescript{}{0}\bfC^{C,B},\bfB^{B})$ via the same graphical representation used to build $(\bfC^{C,B},\bfB^{B})$, just that we only use the infection arrows and ignore all recovery events. This yields that $(\prescript{}{0}\bfC^{C,B},\bfB^{B})$ is a CPDLP with recovery rate $0$ and  $(\bfC^{C,B},\bfB^{B})$ has recovery rate $r$. 
Naturally, we then have $\bfC^{C,B}_t\subset \prescript{}{0}\bfC^{C,B}$	for all $t\geq 0$. We are also interested in the set of vertices in the past that have influenced the state of a particular vertex $x$ at time $t>0.$ Fix an $0<s<t$. For any background process  $\bfB$ we set more generally for $C \in \cP(V)$
\begin{equation}
	\label{dualobject}
	\widehat{\bfC}^{C,t}_s=\widehat{\bfC}^{C,t}_{\bfB,s}
	:=\{y\in V: (y,t-s)\stackrel{\bfB}{\longrightarrow}(x,t) \text{ for some } x \in C\},
\end{equation}
which describes the set of vertices at time $t-s$ relevant for vertices $x \in C$ at time $t$. 
We write $\prescript{}{0}{\widehat{\bfC}}^{C,t}_s$ if for the $\bfB$-infection paths we ignore 
recovery events ($r=0$). Note that this process is increasing in $s$.

Now we will formulate a comparison of $\prescript{}{0}\bfC^{\{x\},\cE}_t$ as well as
$\prescript{}{0}{\widehat{\bfC}}^{C,t}_s$ to a connected component of a long range percolation model. Let us define for every $e\in \cE$ the random variables $X_t(e):=\1_{\{e\in \bfB^{\cE}_t\}}$ and
\begin{align}
	\label{PotentialInfectionEdges}
	Y_t(e):=
	\begin{cases}
		1 & \text{ if } \exists s\leq t \text{ such that } (e,s)  \in \Delta^{\inf} \text{ and } X_s(e)=1,\\
		0 & \text{ otherwise.}
	\end{cases}
\end{align}
Obviously, $(Y_t(e))_{e\in\cE}$ is a family of independent Bernoulli random variables with $b_e:=\Pw(Y_t(e)=1)$.
We declare an edge $e$ to be open if $Y_t(e)=1$ and closed otherwise so that we obtain a long range percolation model. 
We denote for this model the connected component containing $x$ by $\cC^{Y_t}(x)$. Recall from Definition~\ref{InfectionPath} that a $\bfB^{\cE}$-infection path only consists of infection events $(s,e)\in \Delta^{\inf}$ with $e\in \bfB^{\cE}_s$. Thus, it is easy to see that $\prescript{}{0}\bfC^{\{x\},\cE}_t\subset\cC^{Y_t}(x)$
since at least all vertices at time $t$ which are connected to $(0,x)$ via an $\bfB^{\cE}$-infection path are contained in $\cC^{Y_t}(x)$ by the definition of $(Y_t(e))_{e\in \cE}$. This also implies together with monotonicity that 
$\prescript{}{0}\bfC^{\{x\},B}_t\subset\cC^{Y_t}(x)$ for any $B\subset \cE$.
Likewise, since  in \eqref{PotentialInfectionEdges} infection events in both directions are included we have with an analogous argument that
$\prescript{}{0}{\widehat{\bfC}}^{\{x\},t}_{\bfB^{\cE},t}\subset\cC^{Y_t}(x)$.
\begin{lemma}\label{FullEdgeStart}
	For every $\varepsilon>0$ there exists a $T>0$ such that $\E[|\cC^{Y_t}(x)|]<1+\varepsilon$ for all $t\leq T$ and every $x\in V$. This implies in particular that 
	$\E[|\prescript{}{0}\bfC^{C,\cE}_t|]<\infty$
	as well as
	$\E[|\prescript{}{0}{\widehat{\bfC}}^{C,t}_{\bfB^{\cE},t}|]<\infty$
	for all $t\leq T$ if $C\subset V$ is finite.
	We also have 
	$\prescript{}{0}{\widehat{\bfC}}^{\{x\},t}_{\bfB^{\cE},t}
	\subset\cC^{Y_t}(x) \searrow  \{x\}$ almost surely as $t \searrow  0.$
\end{lemma}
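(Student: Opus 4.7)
The plan is to invoke Proposition~\ref{lowerBound&integrability} for the long range percolation model given by $(Y_t(e))_{e\in\cE}$, with edge probabilities $b_e:=\Pw(Y_t(e)=1)$. The crux is to show that $\mu(t):=\sum_{y\neq x}b_{\{x,y\}}\to 0$ as $t\searrow 0$. Using independence of the Poisson process $\Delta^{\inf}_e$ and the chain $X(e)$, Campbell's formula together with the explicit two-state transition probability $\Pw(X_s(e)=1\mid X_0(e)=1)=\hat p_e+(1-\hat p_e)e^{-\hat v_e s}$ (recall $\bfB_0=\cE$, so $X_0(e)=1$) gives
\begin{equation*}
b_e \;\leq\; \lambda\int_0^t \bigl(\hat p_e + (1-\hat p_e)e^{-\hat v_e s}\bigr)\,ds \;\leq\; \lambda\hat p_e\, t + \lambda\min\!\bigl(t,\,1/\hat v_e\bigr),
\end{equation*}
after using $1-e^{-x}\leq \min(x,1)$.

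Summing over $y\neq x$, the first term contributes $\lambda q t\sum_y p_{\{x,y\}}$, which is finite by Cauchy--Schwarz applied to \eqref{ExistenceAssumption}, namely $\sum_y p_{\{x,y\}}\leq\bigl(\sum_y v_{\{x,y\}}p_{\{x,y\}}\bigr)^{1/2}\bigl(\sum_y v_{\{x,y\}}^{-1}\bigr)^{1/2}<\infty$, and vanishes as $t\to 0$. The second term $\lambda\sum_y\min(t,(\gamma v_{\{x,y\}})^{-1})$ vanishes as $t\to 0$ by dominated convergence with summable dominator $(\gamma v_{\{x,y\}})^{-1}$. Hence $\mu(t)\to 0$, so for any $\varepsilon>0$ one can choose $T>0$ so small that $\mu(t)<\varepsilon/(1+\varepsilon)$ for all $t\leq T$. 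The proof of Proposition~\ref{lowerBound&integrability} actually supplies the branching-process bound $\E[|\cC^{Y_t}(x)|]\leq (1-\mu(t))^{-1}$, which then yields $\E[|\cC^{Y_t}(x)|]<1+\varepsilon$. The finite-expectation claims for $\prescript{}{0}\bfC^{C,\cE}_t$ and $\prescript{}{0}{\widehat{\bfC}}^{C,t}_{\bfB^{\cE},t}$ follow by additivity of the no-recovery infection process and of $\widehat{\bfC}$ in the starting set, combined with the single-point containments $\prescript{}{0}\bfC^{\{x\},\cE}_t,\,\prescript{}{0}{\widehat{\bfC}}^{\{x\},t}_{\bfB^{\cE},t}\subset\cC^{Y_t}(x)$ already established in the excerpt.

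For the a.s.\ statement, note that $Y_t(e)$ is monotone non-decreasing in $t$, hence so is $\cC^{Y_t}(x)$, and $|\cC^{Y_t}(x)|$ admits an a.s.\ non-increasing integer-valued limit $L\geq 1$ as $t\searrow 0$. Dominated convergence (with integrable dominator $|\cC^{Y_T}(x)|$) gives $\E[L]=\lim_{t\searrow 0}\E[|\cC^{Y_t}(x)|]=1$; combined with $L\geq 1$ this forces $L=1$ almost surely, i.e.\ $\cC^{Y_t}(x)=\{x\}$ for all sufficiently small $t$. The main technical obstacle is to obtain a bound on $b_e$ that is simultaneously summable over $y$ and vanishes as $t\to 0$: the naive $b_e\leq\lambda t$ is non-summable, while $b_e\leq\lambda\hat p_e t+\lambda/\hat v_e$ is summable but does not vanish; the $\min(\hat v_e t,1)$ interpolation together with the Cauchy--Schwarz consequence $\sum_y p_{\{x,y\}}<\infty$ of \eqref{ExistenceAssumption} is what makes both features work simultaneously.
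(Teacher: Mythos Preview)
Your proof is correct and follows essentially the same route as the paper: bound $b_e$ by the expected occupation time of the two-state chain started from $1$, show the sum over neighbours tends to zero as $t\to 0$, and invoke Proposition~\ref{lowerBound&integrability}. The paper organises the endgame slightly differently---it deduces $\cC^{Y_t}(x)\searrow\{x\}$ first and then uses monotone convergence for the $1+\varepsilon$ bound, whereas you use the explicit branching estimate $(1-\mu(t))^{-1}$ directly and deduce the almost-sure statement afterwards---and it leaves the summability of $\sum_y p_{\{x,y\}}$ implicit where you spell out the Cauchy--Schwarz step, but these differences are cosmetic.
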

\begin{proof}
	Due to translation invariance it suffices to consider an arbitrary $x\in V$ for the first statement of the lemma. 
	Since the rate of potential infections due to $\Delta^{\inf}$ along $e$ is $\lambda$ we see that 
	\begin{equation*}
		b_e=\Pw(Y_t(e)=1)=1-\E\Big[e^{-\lambda\int_{0}^{t}X_s(e)\mathsf{d}s}\Big]\leq \E\Big[\lambda\int_{0}^{t}X_s(e)\mathsf{d}s\Big],
	\end{equation*}
	for every $e\in \cE$, where we used that $1-e^{-x}\leq x$ for all $x\in \R$. The process $X(e)$ is obviously a two-state continuous time Markov chain with jump rates $\hat{v}_e\hat{p}_e$ and $\hat{v}_e(1-\hat{p}_e)$, and thus $\int_{0}^{t}X_s(e)\mathsf{d}s$ is the occupation time of state $1$, i.e. the total time $e$ was open until time $t$. 
	The  first moment can be calculated explicitly, see Pedler \cite{pedler1971occupation}, to give
	\begin{equation*}
		\E\Big[\int_{0}^{t}X_s(e)\mathsf{d}s\Big]=\hat{p}_et+\frac{1-\hat{p}_e}{\hat{v}_e}(1-e^{-\hat{v}_et})\leq \hat{p}_et+\frac{1-e^{-\hat{v}_et}}{\hat{v}_e}.
	\end{equation*}
	By Remark~\ref{Rem:pIsSummable} we know that the right hand side is summable over edges connected to $x$ and each term convergences to $0$ as $t\to 0$. By Lebesgue's dominated convergence theorem the sum also converges to $0$, and thus, for $t$ small enough $\sum_{y\in V\backslash \{x\}} b_{\{x,y\}}<1$. Hence, by Proposition~\ref{lowerBound&integrability} there exists almost surely no infinitely large connected components and moreover $\E[|\cC^{Y_t}(x)|]<\infty$ for every $x\in V$. Furthermore,  $t\mapsto \cC^{Y_t}(x)$ is a.s. monotone decreasing as $t\to 0$ and so $\cC^{Y_t}(x)\searrow \{x\}$. Thus, by monotone convergence the first claim follows.
	
	Since $C$ is finite it suffices by additivity to conclude that
	$\E[|\prescript{}{0}\bfC^{\{x\},\cE}_t|]<\infty$    and
	$\E[|\prescript{}{0}{\widehat{\bfC}}^{\{x\},t}_{\bfB^{\cE},t}|]<\infty$
	for $t \leq T$ for some arbitrary $x\in V$. But since we already know that
	$\prescript{}{0}\bfC^{\{x\},\cE}_t\subset \cC^{Y_t}(x)$ and
	$\prescript{}{0}{\widehat{\bfC}}^{\{x\},t}_{\bfB^{\cE},t}
	\subset \cC^{Y_t}(x)$  for all $t\geq 0$ the second part of the lemma follows immediately from 
	what we have already shown above.
\end{proof}
\begin{proposition}
	\label{propn:finiteinfectedsets2}
	Let $C\subset V$ be non-empty and finite and $B\subset \cE$.  Then
	$\E[|\bfC^{C,B}_t|] \leq \E[|\prescript{}{0}\bfC^{C,B}_t|]<\infty$
	for all $t \geq 0$ as well as
	$\E[|\prescript{}{0}{\widehat{\bfC}}^{C,t}_{\bfB^{B},s}|]<\infty$
	for all $t\geq s \geq 0$.
\end{proposition}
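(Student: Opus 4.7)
The plan is to bootstrap the short-time estimate of Lemma~\ref{FullEdgeStart} to all times via a Markov iteration, relying on monotonicity in the initial background and on additivity.

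First I would handle the reductions. The inequality $\E[|\bfC^{C,B}_t|]\leq \E[|\prescript{}{0}\bfC^{C,B}_t|]$ is immediate from the coupling that suppresses recovery events in the graphical representation. Next, monotonicity in the initial background (Remark~\ref{BasicProperties}) yields $\prescript{}{0}\bfC^{C,B}_t\subset\prescript{}{0}\bfC^{C,\cE}_t$, so by additivity of the no-recovery process and translation invariance of the model,
\begin{equation*}
\E\big[|\prescript{}{0}\bfC^{C,\cE}_t|\big]\leq |C|\cdot M(t),\qquad M(t):=\E\big[|\prescript{}{0}\bfC^{\{x\},\cE}_t|\big],
\end{equation*}
where $M(t)$ does not depend on $x\in V$. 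Using the analogous monotonicity $\prescript{}{0}\widehat\bfC^{C,t}_{\bfB^{B},s}\subset\prescript{}{0}\widehat\bfC^{C,t}_{\bfB^{\cE},s}$ and the same reductions, it suffices to prove $M(t)<\infty$ for all $t\geq 0$ and $\widehat M(t,s):=\E\big[|\prescript{}{0}\widehat\bfC^{\{x\},t}_{\bfB^{\cE},s}|\big]<\infty$ for all $t\geq s\geq 0$.

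For small times, Lemma~\ref{FullEdgeStart} directly gives $M(t)<\infty$ for $t\leq T$. For the dual with $s\leq T$ and arbitrary $t\geq s$, I would repeat the long range percolation comparison from the proof of Lemma~\ref{FullEdgeStart} on the window $[t-s,t]$ instead of $[0,t]$: the only input used there is $\Pw(e\in\bfB^{\cE}_u)\leq 1$, which holds at every $u$, so the same estimate
\begin{equation*}
b'_e\leq \lambda\Big(\hat p_e s+\frac{1-e^{-\hat v_e s}}{\hat v_e}\Big)
\end{equation*}
controls the probability that edge $e$ carries a usable infection event during $[t-s,t]$, and Proposition~\ref{lowerBound&integrability} then gives $\widehat M(t,s)<\infty$ for all $s\leq T$ and $t\geq s$.

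To extend to arbitrary $t$, I would iterate using the strong Markov property at time $T$: conditionally on $\cF_T$ the Poisson data on $(T,\infty)$ is independent of the past, and by monotonicity I may replace $\bfB^{\cE}_T$ by $\cE$ at time $T$, which only enlarges the infection set. By time-homogeneity, what remains on $[T,T+s]$ has the law of $\prescript{}{0}\bfC^{C',\cE}$ with $C'=\prescript{}{0}\bfC^{\{x\},\cE}_T$ driven by fresh Poisson data, and additivity combined with translation invariance yields the submultiplicative bound $M(T+s)\leq M(T)\cdot M(s)$, which iterated $\lceil t/T\rceil$ times gives $M(t)<\infty$ for all $t$. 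The same block decomposition applied backward along $[t-s,t]$, using the Markov structure of the reversed Poisson data and enlarging the intermediate background to $\cE$ at each interface time, extends $\widehat M(t,s)<\infty$ to all $s\leq t$ by induction on the number of length-$T$ blocks.

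The main obstacle will be the restart step: one must carefully check that after replacing $\bfB^{\cE}_T$ by $\cE$ at time $T$ the continued evolution genuinely has the law of $\prescript{}{0}\bfC^{C',\cE}$ on a Poisson environment independent of $\cF_T$, which relies on the strong Markov property of the driving Poisson processes combined with monotonicity of the graphical representation. The corresponding step for the dual is analogous but a little more delicate, since $\widehat M(t,s)$ depends on both $t$ and $s$ and the block induction must be carried out in the backward time direction.
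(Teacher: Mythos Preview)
Your proposal is correct and follows essentially the same route as the paper: reduce to $B=\cE$ by monotonicity, use additivity and translation invariance to pass to a single initial site, invoke Lemma~\ref{FullEdgeStart} on short time blocks, and then iterate via the Markov property, resetting the background to $\cE$ at each interface to get the submultiplicative bound $M(T+s)\leq M(T)M(s)$. The paper phrases the iteration as choosing $r\leq T$ with $t=kr$ and bounding $\E[|\prescript{}{0}\bfC^{C,\cE}_t|]\leq |C|\,\E[|\prescript{}{0}\bfC^{\{x\},\cE}_r|]^k$, and treats the dual case in one line as ``completely analogous'', but this is exactly your argument.
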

\begin{proof}
	By monotonicity it suffices to show the claim for $B=\cE$.
	Fix $t>0$ and let $T>0$ be as in Lemma \ref{FullEdgeStart}. Then choose $u\leq T$ and $k \in \N$ such that $t=ku.$ 
	We have due to additivity that 
	\begin{equation*}
		\E\big[|\prescript{}{0}{\bfC}^{C,\cE}_u|\big]=\E\big[|\bigcup_{x \in C}\prescript{}{0}{\bfC}^{\{x\},\cE}_u|\big] 
		\leq \sum_{x \in C}\E\big[|\prescript{}{0}{\bfC}^{\{x\},\cE}_u|\big] = |C| \cdot  \E\big[|\prescript{}{0}{\bfC}^{\{x\},\cE}_u|\big]<\infty, 
	\end{equation*}
	where the expectation on the right hand side is independent of $x$ and finite due to $|C|<\infty$ and due to our choice of $u$ according to Lemma \ref{FullEdgeStart}. 
	On subsequent time intervals $[lu,(l+1)u]$ we can use the Markov property and iterate the same argument conditioned on $(\overline{\bfC}^{\{C\},\cE}_{lu},\bfB^{\cE}_{lu})$. Note that in any case $\bfB^{\cE}_{lu} \subset \cE$ and so
	because of monotonicity (with respect to the background) we obtain 
	\begin{equation*}
		\E\big[|\prescript{}{0}{\bfC}^{C,\cE}_{(l+1)u}| \big| (\prescript{}{0}{\bfC}^{C,\cE}_{lu},\bfB^{\cE}_{lu})\big]
		\leq  |\prescript{}{0}{\bfC}^{C,\cE}_{lu}| \cdot  \E[|\prescript{}{0}{\bfC}^{\{x\},\cE}_u|].
	\end{equation*}
	Putting this together we arrive  at 
	\begin{equation*}
		\E\big[|\prescript{}{0}{\bfC}^{C,\cE}_t|\big]\leq  |C|\cdot \E\big[|\prescript{}{0}{\bfC}^{\{x\},\cE}_u|\big]^k<\infty. 
	\end{equation*}
	
	\smallskip
	The argument is completely analogous for 
	$\prescript{}{0}{\widehat{\bfC}}^{C,t}_{\bfB^B,s} \subset \prescript{}{0}{\widehat{\bfC}}^{C,t}_{\bfB^{\cE},s}$.
	In this case we subdivide the time interval $[t-s,t]$ into $k$ subintervals of length $u\leq T$ from Lemma \ref{FullEdgeStart}. We again use that $\bfB^{\cE}_t \subset \cE $ for any $t \geq 0,$ in particular at the start of each subinterval. Due to  additivity in the sense that 
	$$\prescript{}{0}{\widehat{\bfC}}^{C,t}_{\bfB^{\cE},s}= \bigcup_{x \in C}\prescript{}{0}{\widehat{\bfC}}^{\{x\},t}_{\bfB^{\cE},s},$$
	we obtain analogously with the help of Lemma \ref{FullEdgeStart} that 
	\begin{equation*}
		\E\big[|\prescript{}{0}{\widehat{\bfC}}^{C,t}_{\bfB^{\cE},s}|\big]\leq  |C|\cdot \E\big[|\prescript{}{0}{\widehat{\bfC}}^{\{x\},u}_{\bfB^{\cE},u}\big]^k<\infty.\qedhere
	\end{equation*}
\end{proof}

Now we want to show that the CPDLP is a Feller process. We consider the transition kernel of the CPDLP denoted by $P_t\big((C,B),\,\cdot\,\,\big):=\Pw((\bfC_t^{C,B},\bfB_t^{B})\in \,\cdot\,)$ for any $t\geq0$. The transition semigroup is defined by
\begin{equation*}
	T_tf(\cdot)=\int f(y)P_t(\cdot,dy) 
\end{equation*}
for any $t\geq0$. In order to show that $(T_t)_{t\geq 0}$ is a Feller semigroup it suffices to show for any fixed $t\geq 0$ that  $(C,B)\mapsto P_t\big((C,B),\cdot\big)$ is continuous (as a mapping into  $\cM_1(\cP(V)\times \cP(\cE))$ the space of all probability measures on $\cP(V)\times \cP(\cE)$ equipped with the topology of weak convergence). This then implies that $T_t$ maps continuous functions to continuous functions, as well as point-wise convergence, i.e.~$T_tf(C,B)\to f(C,B)$ as $t\to 0$ for every $C\subset V$ and $B\subset \cE$, see \cite[Lemma~17.3]{kallenberg2006foundations}. From this strong continuity already follows, i.e. $\sup_{(C,B)}|T_tf(C,B)-f(C,B)|\to 0$  as $t\to 0$ for continuous $f$, see \cite[Theorem~17.6]{kallenberg2006foundations}. 
\begin{proposition}
	\label{propn:Feller}
	The map $((C,B),t)\mapsto P_t\big((C,B),\,\cdot\,\big)$ is continuous seen as a mapping from $(\cP(V)\times \cP(\cE))\times [0,\infty)$ to $\cM_1(\cP(V)\times \cP(\cE))$, and thus $(T_t)_{t\geq 0}$ is a Feller semigroup.
\end{proposition}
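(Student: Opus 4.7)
The plan is to establish continuity in the initial condition $(C,B)$ for each fixed $t\geq 0$ together with continuity at $t=0$, and to combine them via the semigroup property to obtain joint continuity. Since $\cP(V)\times\cP(\cE)$ with the pointwise topology is compact and metrizable (being the product $\{0,1\}^{V\cup\cE}$), weak convergence on it is characterised by convergence of cylinder probabilities
\[
	\Pw\!\big(\bfC^{C,B}_t\cap F=A,\ \bfB^B_t\cap G=H\big),\qquad F\subset V,\ G\subset\cE \text{ finite},\ A\subset F,\ H\subset G.
\]
The key idea is to couple all processes via the graphical representation and to prove a \emph{locality} statement: for fixed $t>0$ and finite $F,G$, such a cylinder event depends on the initial data $(C,B)$ only through its restriction to an almost surely finite random set $D_V\subset V$, $D_E\subset\cE$ that is determined by the Poisson clocks alone.

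For the locality statement with $t>0$ fixed I would take
\[
	D_V:=\prescript{}{0}{\widehat{\bfC}}^{F,t}_{\bfB^{\cE},t},\qquad D_E:=G\cup\bigl\{e\in\cE:\, e\cap D_V\neq\emptyset,\ (\Delta^{\text{op}}_e\cup\Delta^{\text{cl}}_e)\cap[0,t]=\emptyset\bigr\}.
\]
The set $D_V$ is a.s.~finite by Proposition~\ref{propn:finiteinfectedsets2}. For $D_E$, a direct inspection of the two-state construction of $X(e)$ shows that the state of $e$ in $\bfB^B_t$ depends on the initial value $\1_{\{e\in B\}}$ only if no mark of $\Delta^{\text{op}}_e\cup\Delta^{\text{cl}}_e$ lies in $[0,t]$; the expected number of such edges incident to a fixed $x$ is $\sum_{y} e^{-\hat v_{\{x,y\}}t}\leq t^{-1}\sum_{y}\hat v_{\{x,y\}}^{-1}<\infty$ by \eqref{ExistenceAssumption}, so Borel--Cantelli leaves only finitely many per vertex, and since $D_V$ is a.s.~finite this gives a.s.~finiteness of $D_E$. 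By Remark~\ref{BasicProperties} the coupled backgrounds satisfy $\bfB^B_s\subset\bfB^{\cE}_s$, hence every $\bfB^B$-infection path into $F$ is also a $\bfB^{\cE}$-infection path and, by the monotonicity of $s\mapsto \widehat{\bfC}^{F,t}_{\bfB^{\cE},s}$, has all its vertices and transmitting edges within $D_V$ and $D_E$ respectively. Consequently $(\bfC^{C,B}_t\cap F,\,\bfB^B_t\cap G)$ is a deterministic function of $(C\cap D_V,\,B\cap D_E)$ and the Poisson processes. If $(C_n,B_n)\to(C,B)$ pointwise then on a full-probability event the random sets $D_V,D_E$ are fixed and finite, so for $n$ large enough $C_n\cap D_V=C\cap D_V$ and $B_n\cap D_E=B\cap D_E$, and the cylinder events coincide; bounded convergence then gives $P_t((C_n,B_n),\cdot)\Rightarrow P_t((C,B),\cdot)$.

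Continuity at $t=0$ follows from the bounds
\[
	\Pw\!\big(\bfB^B_t\cap G\neq B\cap G\big)\leq \sum_{e\in G}\bigl(1-e^{-\hat v_e t}\bigr),\qquad \Pw\bigl(\Delta^{\text{rec}}\cap(F\times[0,t])\neq\emptyset\bigr)\leq |F|\bigl(1-e^{-rt}\bigr),
\]
both of which tend to $0$ as $t\downarrow 0$, together with the final assertion of Lemma~\ref{FullEdgeStart} that $\prescript{}{0}{\widehat{\bfC}}^{F,t}_{\bfB^{\cE},t}\searrow F$ almost surely as $t\downarrow 0$; on the intersection of these three high-probability events one reads off $(\bfC^{C,B}_t\cap F,\bfB^B_t\cap G)=(C\cap F,B\cap G)$, so $P_t((C,B),\cdot)\Rightarrow \delta_{(C,B)}$. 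For joint continuity at a general $((C,B),t_0)$ one writes $T_{t_n}f(C_n,B_n)=T_{t_0}(T_{t_n-t_0}f)(C_n,B_n)$ for $t_n\geq t_0$, with the analogous decomposition when $t_n<t_0$, and combines the near-identity convergence $T_{t_n-t_0}f\to f$ with the continuity in the initial condition established above. The main technical obstacle is the locality bookkeeping in the second paragraph: the simultaneous use of Proposition~\ref{propn:finiteinfectedsets2} and the Borel--Cantelli step on unflipped edges both rest on the two summability conditions in \eqref{ExistenceAssumption}, and without them the cylinder event would generically depend on infinitely many coordinates of the initial data.
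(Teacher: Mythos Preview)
Your overall strategy is sound and parallels the paper's: both rely on the a.s.\ finiteness of $D_V=\prescript{}{0}{\widehat{\bfC}}^{F,t}_{\bfB^{\cE},t}$ from Proposition~\ref{propn:finiteinfectedsets2} and the shrinking statement of Lemma~\ref{FullEdgeStart}. The paper proves joint continuity in $((C,B),t)$ directly by showing a.s.\ convergence of the coupled processes on the graphical representation; your modular route (continuity in the initial state for fixed $t$, continuity at $t=0$, semigroup glue) is exactly what the paper advertises as sufficient in the paragraph preceding the proposition, and your final paragraph handles the gluing correctly.

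However, your locality claim has a gap: the set $D_E$ is too small. The event $\{\bfC^{C,B}_t\cap F=A\}$ depends on the background $\bfB^B_s$ for \emph{all} $s\in[0,t]$, not only at the terminal time, and the state of an edge $e$ at an intermediate time $s$ depends on $\1_{\{e\in B\}}$ whenever $s$ precedes the \emph{first} update of $e$---even if a later update lands in $[0,t]$. Concretely, if $e$ has both endpoints in $D_V$, carries an infection arrow at time $t/4$, and receives its first update at time $t/2$, then whether that arrow transmits is decided by $\1_{\{e\in B\}}$; yet such an $e$ is excluded from your $D_E$ because $(\Delta^{\text{op}}_e\cup\Delta^{\text{cl}}_e)\cap[0,t]\neq\emptyset$. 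One can therefore produce $B,B'$ with $B\cap D_E=B'\cap D_E$ but $\bfC^{C,B}_t\cap F\neq\bfC^{C,B'}_t\cap F$, so the displayed cylinder event is not a function of $(C\cap D_V,B\cap D_E)$ and the Poisson data.

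The fix is painless and actually simplifies your argument. Since every transmitting edge along a $\bfB^{\cE}$-infection path into $F$ has \emph{both} endpoints in $D_V$ (by the monotonicity of $s\mapsto \prescript{}{0}{\widehat{\bfC}}^{F,t}_{\bfB^{\cE},s}$ that you already invoke), you may take
\[
D_E':=G\cup\bigl\{\{x,y\}\in\cE:\ x,y\in D_V\bigr\},
\]
which is automatically finite because $|D_V|<\infty$ a.s. With this choice the locality claim holds: if $B$ and $B'$ agree on $D_E'$ then, since both backgrounds are driven by the same update processes, $\bfB^B_s(e)=\bfB^{B'}_s(e)$ for every $e\in D_E'$ and all $s\geq 0$, whence all $\bfB^B$- and $\bfB^{B'}$-infection paths inside $D_V$ coincide. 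Your Borel--Cantelli step on unflipped edges is then unnecessary (though correct for what it asserts).
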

\begin{proof}
	To prove the claim it suffices to show (using the graphical representation) that
	\begin{equation}
		\label{CBconv}
		(\bfC^{C_n,B_n}_{t_n},\bfB^{B_n}_{t_n})\to (\bfC^{C,B}_{t},\bfB^{B}_{t})
	\end{equation}
	almost surely as $((C_n,B_n),t_n)\to ((C,B),t)$. Since we equipped $\cP(V)\times \cP(\cE)$ with the topology which is  induced by pointwise convergences it thus suffices to show that
	\begin{equation*}
		\1_{\{(x,e)\in (\bfC^{C_n,B_n}_{t_n},\bfB^{B_n}_{t_n})\}}\to \1_{\{(x,e)\in (\bfC^{C,B}_{t},\bfB^{B}_{t})\}}
	\end{equation*}
	almost surely as $((C_n,B_n),t_n)\to ((C,B),t)$ for every $(x,e)\in V\times\cE$. Since $\1_{\{e\in \bfB^{B}_{t}\}}$ and $\1_{\{e'\in \bfB^{B}_{t}\}}$ are independent if $e\neq e'$ it is clear that
	\begin{equation}\label{PointwiseContBackground}
		\1_{\{e\in \bfB^{B_n}_{t_n}\}} = \1_{\{e\in \bfB^{B}_{t}\}}
	\end{equation}
	if neither $\Delta^{\text{op}}_e$ nor $\Delta^{\text{cl}}_e$ have points between $t_n$ and $t$ and $\1_{\{e\in B_n\}}=\1_{\{e\in B\}}$. This will 
	almost surely be the case as soon as  $(B_n,t_n)$ is close enough to  $(B,t)$. Thus, it just remains to show convergence of the infection process. Let us first consider $t_n\nearrow t$. 
	Fix an $\varepsilon >0$ and let  $\bfB^{\varepsilon}$ be a background process that is restarted at time $t-\varepsilon$ in the state $\cE$
	and consider $\widehat{\bfC}^{\{x\},\varepsilon}_{\bfB^{\varepsilon},\varepsilon}$.

	Then, Lemma~\ref{FullEdgeStart} yields that 
	$\widehat{\bfC}^{\{x\},\varepsilon}_{\bfB^{\varepsilon},\varepsilon}$
	is finite almost surely and $\widehat{\bfC}^{\{x\},\varepsilon}_{\bfB^{\varepsilon},\varepsilon}
	\searrow \{x\}$ as $\varepsilon\searrow 0$. Since the process is monotone there are fewer infection paths for the actual process because at time $t-\varepsilon$ fewer edges are open. Likewise, as $\varepsilon\searrow 0$ we will have $\Delta^{\text{rec}}_x \cap [t-\varepsilon, t]=\emptyset.$ Thus, almost surely for $n$ large enough we have 
	\begin{equation*}
		\1_{\{x\in \bfC^{C_n,B_n}_{t_n}\}}= \1_{\{x\in \bfC^{C,B}_{t_n}\}} 
		\Rightarrow \1_{\{x\in \bfC^{C_n,B_n}_{t_n}\}}= \1_{\{x\in \bfC^{C,B}_{t}\}}.
	\end{equation*}
	It now remains to show that the condition on the left hand side is fulfilled as $n$ becomes large.
	For this we note from the above that a.s.~for $n$ large enough 
	$\widehat{\bfC}^{\{x\}, t_n}_{ \bfB^{\cE}, t_n}=\widehat{\bfC}^{\{x\}, t}_{ \bfB^{\cE}, t}$.
	We now use that by Proposition~\ref{propn:finiteinfectedsets2}
	$\prescript{}{0}{\widehat{\bfC}}^{\{x\}, t}_{ \bfB^{\cE}, t}$
	is a.s.~a finite set which for all $0\leq s \leq t$ includes 
	$\widehat{\bfC}^{\{x\}, t}_{ \bfB^{B_n}, s}$ for all $n$ and 
	$\widehat{\bfC}^{\{x\}, t}_{ \bfB^{B}, s}$.
	Because we have for $n$ possibly larger still that $C_n$ and $C$ agree on $\prescript{}{0}{\widehat{\bfC}}^{\{x\}, t}_{ \bfB^{\cE}, t}$ the claim now follows.
	Thus, we have shown left continuity. Right continuity follows by an analogous argument. Together with \eqref{PointwiseContBackground} this shows \eqref{CBconv} and so the claimed  continuity.
	As pointed out before the proposition this implies that $(T_t)_{t \geq 0}$ is a Feller semigroup and hence the CPDLP a Feller process.
\end{proof}

\subsection{Phase transition with respect to the non-trivial upper invariant law}\label{InvariantLaw}
In this subsection we study the phase transition with respect to the non-trivial upper invariant law $\overline{\nu}$. To be precise we will show Theorem~\ref{CrticalValuesAgree} which states that the critical infection rate for survival agrees with the critical infection rate for non-triviality of the upper invariant law, i.e. ~$\lambda_c'=\lambda_c$.

In the last section we proved that the CPDLP $(\bfC,\bfB)$ is a well-defined Feller process on the state space $\cP(V)\times \cP(\cE)$ with  corresponding Feller semigroup $(T_t)_{t\geq0}$. We denote by $\nu T_t$ the distribution of the CPDLP at time $t\geq 0$ when the initial distribution is $\nu$.

Recall that the upper invariant law $\overline{\nu}$ is the weak limit of  $\big((\delta_{V}\otimes \delta_{\cE})T_t\big)_{t\geq 0}$ as $t\to \infty$, and that it dominates every other invariant law $\nu$ of the CPLDP with respect to the stochastic order.  
In fact, even if we start the background process stationary, i.e. $\bfB_0\sim\pi$, instead of in $\cE$ the CPLDP still convergences towards the upper invariant law $\overline{\nu}$. Analogously to \cite[Lemma~6.2]{seiler2022contact} one can show that if $\mu$ is a probability measure such that $\pi\preceq \mu$ then $(\delta_{V}\otimes\mu)T_t\Rightarrow \overline{\nu}$  as $t\to \infty$.

Recall the definition of the process $\widehat{\bfC}$ from \eqref{dualobject}, i.e.~for any fixed  $t\geq 0$ and $C\subset V$ we have
\begin{equation*}
	\widehat{\bfC}^{C,t}_{\bfB,s}
	=\{y\in V: (y,t-s)\stackrel{\bfB}{\longrightarrow}(x,t) \text{ for some } x \in C\}.
\end{equation*}
One way of interpreting this definition is the following. First we fix a time $t$ and a set of infected vertices $C$. Next we fix the background $\bfB$ and let the graphical representation run backwards in time from $t$ to $t-s$. By this train of thought it is easy to see that the following \emph{duality} relation holds. Let $A,C\subset V$ and $B\subset \cE$, then 
\begin{align}\label{ConditinalDuality}
	\Pw\big(\bfC^{C,B}_{t}\cap A \neq \emptyset \big|\bfB\big)=\Pw\big(\bfC^{C,B}_{t-s}\cap \widehat{\bfC}^{A,t}_{\bfB^{B},s}\neq \emptyset \big|\bfB\big)=\Pw\big(C\cap \widehat{\bfC}^{A,t}_{\bfB^{B},t} \neq \emptyset \big|\bfB\big) 
\end{align}
holds almost surely for all $s\leq t$. For a contact process without background this procedure yields a so called self-duality relation. For the infection process $\bfC$ this is not always the case since if the background is started in an arbitrary initial condition $B\subset \cE$, then $(\widehat{\bfC}^{t},\widehat{\bfB}^{t})$ will in general not be a CPDLP. Only if we start the background process stationary, i.e.~$\bfB_0\sim\pi$, can we recovery the self-duality. The main reasons for this is that $\bfB$ is reversible with respect to $\pi$, i.e.~if $\bfB_0\sim\pi$, then $(\bfB_s)_{s\in[0,t]}\stackrel{d}{=} (\bfB_{t-s})_{s\in[0,t]}$. Thus, we get that 
\begin{equation*}
	\Pw(\bfC^C_{t}\cap A \neq \emptyset)=\Pw(\bfC^A_{t}\cap C \neq \emptyset).
\end{equation*}
For a detailed proof of this equality see \cite[Proposition~6.1]{seiler2022contact}. Note that as already mentioned in the beginning of Section~\ref{GraphRepCPDLP} we dropped the super and subscripts concerning the background process since we consider it to be stationary. 

This self duality relation enables us to deduce the following connection between the survival probability $\theta$ and the particle density of the upper invariant law $\overline{\nu}$.
\begin{equation*}
	\theta(C)=\Pw(\bfC^C_t\neq \emptyset\,\, \forall t\geq 0)=\overline{\nu}(\{A\subset \cP(V):C\cap A\neq\emptyset\}\times \cP(\cE)).
\end{equation*}
which can be used to show that the two critical infection rates agree, i.e.~$\lambda_c'=\lambda_c$, and therefore show Theorem~\ref{CrticalValuesAgree}. For a detailed proof see \cite[Proposition~6.3]{seiler2022contact}.
\begin{remark}
	While the results in \cite[Section~6]{seiler2022contact} that we refer to in this subsection are stated there for graphs with uniformly bounded degrees, which is not the case for the CPDLP,  the proofs of these specific results do not rely on this property and can thus be applied in the exact same way in this setting. 
\end{remark}

\subsection{Comparison with a contact process}\label{ConsequencesOfTheGraphRep}
In this subsection we prove Theorem~\ref{ComparisonWithLongRangeCP}, which provides a comparison between a contact process with a specific infection kernel and the CPDLP, and Corollary~\ref{CritRateFastSpeed}. We will see that this contact process acts as a lower bound with respect to survival, i.e if the contact process survives so does the CPDLP.

Recall the contact process with general infection kernel from 
\eqref{GeneralInfection}.
Now we show Theorem~\ref{ComparisonWithLongRangeCP}, which states that we can couple the CPDLP $(\bfC^{C},\bfB)$ with a contact process $\overline{\bfX}^C$ with infection kernel 
$(\overline{a}_{e}(\lambda))_{e\in \cE}$, where
\begin{align*}
	\overline{a}_{e}(\lambda)=\frac{1}{2}\Big(\lambda+ \hat{v}_{e}-\sqrt{(\lambda+\hat{v}_{e})^2- 4\lambda \hat{v}_{e}\hat{p}_{e}}\,\Big)
\end{align*}
and the same recovery rate such that $\overline{\bfX}^{C}_t\subset \bfC^{C}_t$ for all $t\geq 0$.
\begin{proof}[Proof of Theorem~\ref{ComparisonWithLongRangeCP}]
	Recall the independent two state Markov processes $(X(e))_{e \in \cE}$, where $X_t(e)=\1_{\{e\in \bfB_{t}\}}$ for all $t\geq 0$. 
	The process $X(e)$    has transitions
	\begin{align*}
		0&\to 1	\quad \text{ at rate } \hat{v}_e\hat{p}_e \text{ and }\\
		1&\to 0	\quad \text{ at rate } \hat{v}_e(1-\hat{p}_e).
	\end{align*}
	We now set for all $e\in \cE$
	\begin{align*}
		Y_t(e):=&|\{s\leq t:(e,s)\in\Delta^{\text{inf}} \text{ and }X_s(e)=1\}|.
	\end{align*}
	Note that the intensity measure of the process $Y(e)$ depends on $X(e)$. The process $Y(e)$ is sometimes called a doubly stochastic Poisson process. Given that $X(e)$ is currently in state $x\in\{0,1\}$ the transitions of $Y(e)$ which is currently in state $n$ are
	\begin{equation*}
		n\to n+1	\quad \text{ at rate } \lambda x.
	\end{equation*}
	Now \cite[Theorem~1.4]{broman2007stochastic} together with Strassen's theorem yields that there exist independent Poisson processes $(\overline{Y}(e))_{e \in \cE}$
	with rate 
	\begin{align*}
		\overline{a}_{e}=\frac{1}{2}\Big(\lambda+ \hat{v}_{e}-\sqrt{(\lambda+\hat{v}_{e})^2- 4\lambda \hat{v}_{e}\hat{p}_{e}}\,\Big)
	\end{align*}
	such that $Y_t(e)\geq \overline{Y}_t(e)$ almost surely for all $t\geq 0$ and $e \in \cE.$
	This means that we can find a family of independent Poisson point processes $(\overline{\Delta}^{\inf}_e)_{e \in \cE}$ 
	such that $\overline{\Delta}^{\inf}_e$ has rate $\overline{a}_e$ 
	for $e\in \cE$ and such that $(e,t)\in \overline{\Delta}^{\inf}$ implies that
	$(e,t)\in \Delta^{\inf}$ and $e\in \bfB_{t}$.
	
	Thus, we can construct a contact process $\overline{\bfX}$ on $\cP(V)$ via the graphical representation, with respect to the Poisson point process $\overline{\Delta}^{\inf}$ such that it has the required transition rates and $\overline{\bfX}^{C}_t\subset \bfC^{C}_t$ for all $t\geq 0$. It only remains to show that $\overline{\bfX}$ is a well-defined Feller process. To show this it suffices to verify \eqref{RateBound}. We see that
	\begin{equation}\label{HelpRateBound}
		0 \leq \overline{a}_{e}=\frac{\lambda+\hat{v}_{e}}{2}\Big(1-\sqrt{1- \tfrac{4\hat{v}_{e}\hat{p}_{e}\lambda}{(\lambda+\hat{v}_{e})^2}}\,\Big)\leq \frac{2\lambda \hat{v}_{e}\hat{p}_{e}}{\lambda+\hat{v}_{e}},
	\end{equation}
	where we used that $1-x\leq \sqrt{1-x}$ for $x\in [0,1]$ as well as $\tfrac{4\hat{v}_{e}\hat{p}_{e}\lambda}{(\lambda+\hat{v}_{e})^2}\in[0,1]$ which follows from the fact that
	$(\lambda+\hat{v}_{e})^2\geq4\hat{v}_{e}\lambda$ is equivalent to $(\lambda-\hat{v}_{e})^2\geq 0$ and $\hat{p}_e \in [0,1]$.  Since $\frac{ \hat{v}_{e}}{\lambda+\hat{v}_{e}}\leq 1$ we see that $	\overline{a}_{e}\leq 2\lambda\hat{p}_{e}$. But by \eqref{ExistenceAssumption} the sequence $(\hat{p}_{\{x,y\}})_{y\in V}$ is summable for every $x\in V$, and thus \eqref{RateBound} is satisfied.
	The last claim of the theorem follows immediately.
\end{proof}
Next, we work towards the proof of Corollary~\ref{CritRateFastSpeed}. We thus assume 
kernels of the form given in (\ref{scaledkernels}). As a first result we 
show that the rates $(\overline{a}_{e})_{e\in \cE}=(\overline{a}_{e}(\gamma))_{e\in \cE}$ 
viewed as functions of $\gamma$ converge to $(\lambda \hat{p}_e)_{e\in \cE}$ as $\gamma \to \infty$.
\begin{lemma}\label{AsympFastSpeedLimitRates}
	Let the sequence $(\overline{a}_{e}(\gamma))_{e\in \cE}$ be chosen as in Theorem~\ref{ComparisonWithLongRangeCP}. Then, it follows that for every $x\in V$ 
	\begin{equation}
		\label{a_econv}
		\lim_{\gamma\to \infty} \sum_{y\in V\backslash\{x\}} |\overline{a}_{\{x,y\}}(\gamma)-\lambda \hat{p}_{\{x,y\}}|\to 0.
	\end{equation}
	as well as $\overline{a}_{e}(\gamma)\nearrow\lambda \hat{p}_e$ for all $e\in \cE$ as $\gamma \to \infty$.
\end{lemma}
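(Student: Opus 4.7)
The plan is to split the lemma into (i) pointwise monotone convergence $\overline{a}_e(\gamma) \nearrow \lambda\hat{p}_e$ for each $e\in \cE$, plus (ii) a $\gamma$-uniform summable majorant on $|\overline{a}_e(\gamma) - \lambda\hat{p}_e|$; the $\ell^1$-statement \eqref{a_econv} then follows from dominated convergence.

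For (i), I would first rewrite the defining formula via the standard conjugate trick to obtain
\[
\overline{a}_e(\gamma) \;=\; \frac{2\lambda\hat{v}_e\hat{p}_e}{\lambda + \hat{v}_e + \sqrt{(\lambda+\hat{v}_e)^2 - 4\lambda\hat{v}_e\hat{p}_e}}.
\]
Since $\hat{v}_e = \gamma v_e \to \infty$ as $\gamma\to\infty$, factoring $(\lambda+\hat{v}_e)^2$ out of the radicand shows that $4\lambda\hat{v}_e\hat{p}_e/(\lambda+\hat{v}_e)^2 \to 0$, so the denominator is asymptotic to $2(\lambda+\hat{v}_e)$, and one reads off the limit $\overline{a}_e(\gamma)\to \lambda\hat{p}_e$. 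For the monotonicity, since $\gamma\mapsto \hat{v}_e$ is increasing linearly, it suffices to check that $b\mapsto \tfrac{1}{2}\bigl(\lambda+b-\sqrt{(\lambda+b)^2 - 4\lambda b\hat{p}_e}\bigr)$ is non-decreasing on $[0,\infty)$. Taking the derivative, this reduces to the inequality $\sqrt{(\lambda+b)^2 - 4\lambda b\hat{p}_e} \geq (\lambda+b) - 2\lambda\hat{p}_e$, which is trivial when the right-hand side is non-positive and which, upon squaring, simplifies to $\hat{p}_e \leq 1$.

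For (ii) the bound \eqref{HelpRateBound} already obtained in the proof of Theorem~\ref{ComparisonWithLongRangeCP} gives $\overline{a}_e(\gamma) \leq 2\lambda\hat{p}_e$ uniformly in $\gamma>0$, whence $|\overline{a}_e(\gamma) - \lambda\hat{p}_e| \leq 3\lambda\hat{p}_e$. It therefore remains to verify that $\sum_{y\in V\setminus\{x\}} \hat{p}_{\{x,y\}} < \infty$. The assumption $\sum_{y} v_{\{x,y\}}^{-1} < \infty$ in \eqref{ExistenceAssumption} forces $v_{\{x,y\}}\to\infty$ as $d(x,y)\to\infty$, so $v_{\{x,y\}}\geq 1$ outside a finite set of vertices $y$; combined with $\sum_y v_{\{x,y\}}p_{\{x,y\}}<\infty$ this yields $\sum_y p_{\{x,y\}}<\infty$ and hence $\sum_y\hat{p}_{\{x,y\}}=q\sum_yp_{\{x,y\}}<\infty$. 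Dominated convergence then produces \eqref{a_econv}.

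The proof is essentially algebraic and I do not anticipate any serious obstacle; the whole argument is a consequence of the explicit form of $\overline{a}_e$ together with the uniform bound from \eqref{HelpRateBound}. The only point that genuinely requires verification is the summability of $(\hat{p}_{\{x,y\}})_y$, and as indicated above this is an immediate byproduct of the standing assumption \eqref{ExistenceAssumption}.
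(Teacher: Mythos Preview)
Your proposal is correct and follows essentially the same route as the paper: pointwise convergence, monotonicity in $\gamma$, a summable majorant independent of $\gamma$, and dominated convergence. The only cosmetic differences are that the paper obtains the pointwise limit via the Taylor expansion of $\sqrt{1-x}$ rather than the conjugate trick, and that once monotone increasing convergence to $\lambda\hat{p}_e$ is established the paper uses the immediate bound $|\overline{a}_e(\gamma)-\lambda\hat{p}_e|=\lambda\hat{p}_e-\overline{a}_e(\gamma)\leq \lambda\hat{p}_e$, making your detour through \eqref{HelpRateBound} and the constant $3$ unnecessary (though of course still valid).
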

\begin{proof}
	Let us consider the function $x\mapsto\sqrt{1-x}$ for $x \in [0,1]$. The Taylor expansion at $x=0$ yields that
	\begin{equation*}
		\sqrt{1-x}=1-\frac{x}{2}-O(x^2).
	\end{equation*}
	Thus, since $\tfrac{4\hat{v}_{e}\hat{p}_{e}\lambda}{(\lambda+\hat{v}_{e})^2}\in[0,1]$ 	we obtain
	\begin{align*}
		1-\sqrt{1-\frac{4\gamma v_{e}\hat{p}_{e}\lambda}{(\lambda+\gamma v_{e})^2}}= \frac{1}{2}\frac{4\gamma v_{e}\hat{p}_{e}\lambda}{(\lambda+\gamma v_{e})^2} +O(\gamma^{-2}),
	\end{align*}
	where $O(\gamma^{-2})$ is meant with respect to $\gamma \to \infty$. This implies that
	\begin{align*}
		\overline{a}_{e}(\gamma)=\frac{\lambda+\gamma v_{e}}{2}\Big(1-\sqrt{1- \frac{4\gamma v_{e}\hat{p}_{e}\lambda}{(\lambda+\gamma v_{e})^2}}\,\Big)=\frac{\gamma v_{e}\hat{p}_{e}\lambda}{\lambda+\gamma v_{e}}+O(\gamma^{-1}).
	\end{align*}
	The remainder vanishes and $\tfrac{\gamma v_{e}}{\lambda+\gamma v_{e}}\to 1$ as $\gamma\to \infty$ so that  $\overline{a}_{e}(\gamma)\to \lambda \hat{p}_{e}$. 
	Next let us consider the derivative of $\overline{a}_e(\gamma)$ with respect to $\gamma$, which is 
	\begin{equation*}
		\partial_\gamma\overline{a}_e(\gamma)=\frac{1}{2}\Big( v_{e}-\frac{v_e(\lambda+\gamma v_{e})- 2 v_{e}\hat{p}_{e}\lambda }{\sqrt{(\lambda+\gamma v_{e})^2- 4v_{e}\hat{p}_{e}\lambda\gamma }}\,\Big).
	\end{equation*}
	One can directly calculate that the fraction on the right hand side is always smaller than $v_e$, and therefore $\partial_\gamma\overline{a}_e(\gamma)>0$ for all $\gamma>0$. This implies that $\overline{a}_e(\gamma)$ is monotone increasing for all $e\in \cE$, and thus $0\leq \overline{a}_{e}(\gamma)\nearrow \lambda \hat{p}_{e}$ as $\gamma\to\infty$. 
	This implies
	\begin{equation*}
		\Big|\overline{a}_e(\gamma)-\lambda\hat{p}_e\Big|= \lambda\hat{p}_e-\overline{a}_e(\gamma)
		\leq \lambda\hat{p}_e
	\end{equation*}
	for every $e\in \cE$, which yields that $(\overline{a}_{\{x,y\}}(\gamma)-\lambda \hat{p}_{\{x,y\}})_{y\in V\backslash\{x\}}$ is summable for every $x\in V$. Together with the pointwise converge we proved above we get \eqref{a_econv}.
\end{proof}
We have shown that the rates $(\overline{a}_e)_{e\in\cE}$ converge to the sequence $(\lambda \hat{p}_e)_{e\in \cE}$  from below. Heuristically speaking this justifies to some extend the believe that the infection $\bfC$ converges to the process $\hat{\bfX}$ in the sense of Conjecture~\ref{LimitFastSpeed}, i.e.~that the respective critical values converge. We were not able to show this claim, but we show now that $\lambda_c^{\infty}$ at least acts as an upper bound, i.e.~$\limsup_{\gamma\to \infty}\lambda_c(\gamma)\leq \lambda_c^{\infty}$.

\begin{proof}[Proof of Corollary~\ref{CritRateFastSpeed}] 
	We fix  $q$ and omit it as an index throughout the proof.
	We first observe that by Theorem~\ref{ComparisonWithLongRangeCP} 
	$$\lambda':=\limsup_{\gamma\to \infty}\overline{\lambda}_c(\gamma)
	\geq \limsup_{\gamma\to \infty}\lambda_c(\gamma).$$
	Thus, in order to prove the inequality in the  claim of the corollary it suffices to show $\lambda' \leq \lambda_c^{\infty}.$ 
	That $\lambda'= \lim_{\gamma\to \infty}\overline{\lambda}_c(\gamma)= \lambda_c^{\infty}$ follows then also  
	because by Lemma~\ref{AsympFastSpeedLimitRates} we know that $\overline{a}_e(\lambda,\gamma)\nearrow \lambda  \hat{p}_e$ for every $e\in\cE$, and hence we get  $\lambda_c^{\infty}\leq \liminf_{\gamma\to \infty}\overline{\lambda}_c(\gamma).$ In order to show $\lambda'\leq \lambda_c^{\infty}$
	we first need another bound on the rates $\overline{a}_e=\overline{a}_e(\lambda,\gamma)$.
	
	We know that $\sqrt{1-x}\leq 1-\frac{x}{2}$ for $0\leq x\leq 1$, and thus analogously to
	\eqref{HelpRateBound} we see that
	\begin{equation}
		\label{a_eLowerBound}
		\overline{a}_e(\lambda,\gamma)=\frac{\lambda+\hat{v}_{e}}{2}\Big(1-\sqrt{1- \tfrac{4\hat{v}_{e}\hat{p}_{e}\lambda}{(\lambda+\hat{v}_{e})^2}}\,\Big)\geq \Big(\frac{\hat{v}_{e}}{\lambda+\hat{v}_{e}}\Big)\lambda \hat{p}_{e}.
	\end{equation}
	We know that $\frac{\gamma v_{e}}{\lambda+\gamma v_{e}}\nearrow 1$ as $\gamma v_{e}\nearrow \infty$ for all $e\in \cE$ and since $(v_{e})_{e \in \cE}$ is uniformly bounded below  due to \eqref{ExistenceAssumption} there exists for every $\varepsilon>0$ a $\gamma'$ large enough  such that $\frac{\gamma v_{e}}{\lambda+\gamma v_{e}}>1-\varepsilon$ for all $e\in \cE$ and all $\gamma>\gamma'$. Thus, we get from \eqref{a_eLowerBound} for all $e\in \cE$ that  $\overline{a}_e(\lambda,\gamma)\geq (1-\varepsilon)\lambda \hat{p}_e$
	for all $\gamma>\gamma'$, and so 
	$\lambda' \leq \frac{1}{1-\varepsilon}\lambda_c^{\infty}.$ By letting $\varepsilon$ tend to zero 
	$\lambda' \leq \lambda_c^{\infty}$ follows.
	
	Finally, since we are assuming spatial translation invariance for the kernel $(\hat{p}_e)_{e \in \cE}$ as well as $\hat{p}_{e'}>0$ for at least one $e' \in \cE$ we will have survival of our contact process $\bfX^{\infty}$ if the classical contact process on $\Z$ with infection parameter $\lambda  \hat{p}_{e'}$ survives. But for this contact process the critical infection rate for survival is known to be finite which implies $\lambda_c^{\infty}<\infty.$
\end{proof}
\section{Comparison of the background with a long range percolation model}
\label{BlockwiseLongRangePercolation}
The aim of this section is to prove Theorem~\ref{ImmunThm} and Theorem~\ref{AsymptoticSlowSpeedThm}. These proofs can be found respectively in Subsection~\ref{ImmunizationRegime} and Subsection~\ref{SlowSpeedRegime}. In order to prove these results we need a another infection process $\widetilde{\bfC}$, which dominates the original infection process $\bfC$, i.e.~$\bfC_t\subset\widetilde{\bfC}_t$ for all $t\geq0$, and which
is somewhat easier to analyze.

The idea, which was already used by \cite{linker2019contact} for graphs with bounded degree,  is to compare the dynamical long range percolation $\bfB$ blockwise to a long range percolation model. We partition the time axis $[0,\infty)$ into equidistant intervals $[nT,(n+1)T)$, where $T>0$ and $n\in \N_0$. We also define for each edge $e\in \cE$,
\begin{align}\label{ClosedEdge}
	w_{n}(e)
	:=\begin{cases}
		1 &  \text{if } e\notin \bfB_t \text{ for all } t\in [nT,(n+1)T),\\
		0& \text{otherwise, } 
	\end{cases}
\end{align}
which indicates whether an edge $e$ is closed for the whole time period $[nT,(n+1)T)$. To simplify notation we write $w_n(x,y)$ instead of $w_n(\{x,y\})$ for $\{x,y\}\in \cE$. Now if we accept all infection events $(e,t)\in \Delta^{\inf}$ with $t\in [nT,(n+1)T)$ such that $w_{n}(e)=0$, this leads to an infection process, which survives more easily than $\bfC$, see also the illustration of the graphical representation in \autoref{fig:Reduction}, where we have for the sake of simplicity only drawn  the process with nearest neighbor interactions. 
\begin{figure}[ht]
	\centering 
	\subfigure{\label{fig:Reduction:a}\includegraphics[width=70mm]{GraphicalRepresentationRanInf2}}\hfill
	\subfigure{\label{fig:Reduction:b}\includegraphics[width=70mm]{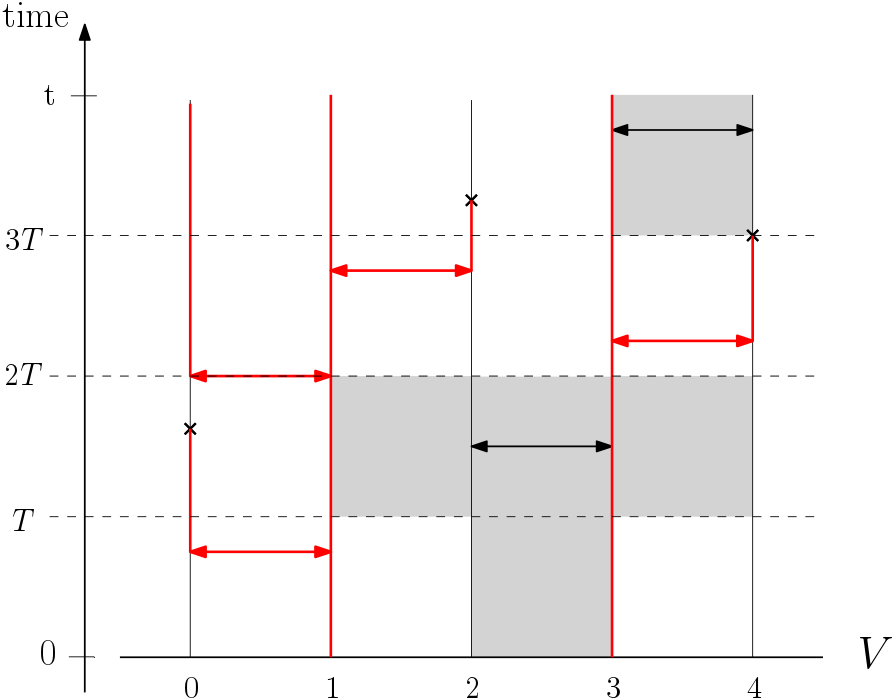}}
	\caption{ On the left hand side we illustrate the graphical representation with respect to the background $\bfB$. On the right hand side we modified the background in such a way that the edges are only closed if they are closed throughout a whole interval $[nT,(n+1)T)$. In both pictures red lines indicate infection paths.}
	\label{fig:Reduction}
\end{figure}
A problem is that obviously $(w_n(e))_{(n,e)\in \N_0\times \cE}$ is not a family of independent random variables. But at least we know that $w_n(e)$ and $w_m(e')$ are independent  for all $n,m\in \N_0$ as long as $e\neq e'$. In order to deal with the dependence that occurs along the time line for a fixed edge
$e$ we need a lower bound on the conditional probability that $w_n(e)=1$ given all previous states $w_{n-1}(e), \dots, w_{0}(e)$, which was shown in \cite[Proposition~3.9]{linker2019contact}.
\begin{proposition}\label{EdgeBound}
	Let $T>0$ be fixed. Then we have for all $n\in \N$ and $e\in E$ that 
	\begin{equation}\label{delta_edef}
		\begin{aligned}
			&\Pw(w_n(e)=1|w_{n-1}(e),\dots, w_{0}(e))\\
			&\geq (1-\hat{p}_{e})e^{-\hat{p}_{e}\hat{v}_{e}T}\frac{e^{-\hat{v}_{e}T}+(1-\hat{p}_{e})(1-e^{-\hat{v}_{e}T})-e^{-\hat{p}_{e}\hat{v}_{e}T}}{1-e^{-\hat{p}_{e}\hat{v}_{e}T}}\\
			&=(1-\hat{p}_{e})e^{-\hat{p}_{e}\hat{v}_{e}T}\Big(1-\hat{p}_{e}\frac{1-e^{-\hat{v}_{e}T}}{1-e^{-\hat{p}_{e}\hat{v}_{e}T}}\Big)=:\delta_{e}(\gamma,q,T)=\delta_{e}.
		\end{aligned}
	\end{equation}
\end{proposition}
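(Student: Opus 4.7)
The plan is to reduce the claim to an explicit calculation with the transition probabilities of the two-state Markov chain $X(e) = (\1_{\{e \in \bfB_t\}})_{t \geq 0}$ tracking a single edge. Recall from the graphical construction that $X(e)$ jumps from $0$ to $1$ at rate $\hat{v}_e \hat{p}_e$ and from $1$ to $0$ at rate $\hat{v}_e(1-\hat{p}_e)$, is stationary with $\Pw(X_t(e)=1) = \hat{p}_e$, and is independent across edges. Since each $w_k(e)$ is measurable with respect to $X(e)$, I can work on one fixed edge throughout.

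The first step is to apply the Markov property of $X(e)$ at the deterministic time $nT$. Since $w_n(e) = 1$ forces $X_t(e) = 0$ on the entire interval $[nT,(n+1)T)$ and in particular $X_{nT}(e) = 0$, one obtains $\Pw(w_n(e) = 1 \mid \cF_{nT}) = e^{-\hat{v}_e \hat{p}_e T} \1_{\{X_{nT}(e)=0\}}$, so it suffices to lower-bound $\Pw(X_{nT}(e) = 0 \mid w_{n-1}(e), \dots, w_0(e))$. The easy case is $w_{n-1}(e)=1$: the chain is then $0$ throughout $[(n-1)T, nT)$, and since almost surely no Poisson point of $\Delta^{\text{op}}_e \cup \Delta^{\text{cl}}_e$ sits at the fixed time $nT$, we get $X_{nT}(e) = 0$ a.s., so the conditional probability equals $1$. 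The real work is the case $w_{n-1}(e) = 0$; here I would apply the Markov property once more at time $(n-1)T$ and decompose according to $X_{(n-1)T}(e) \in \{0,1\}$. Writing $\mu := \Pw(X_{(n-1)T}(e) = 0 \mid w_{n-2}(e), \dots, w_0(e))$, the target probability becomes an explicit ratio of two affine functions of $\mu$ whose coefficients are the transition probabilities $p_{ij}(T)$ of the two-state chain together with the in-interval quantities $\Pw(w_{n-1}(e) = 0,\, X_{nT}(e) = 0 \mid X_{(n-1)T}(e) = i)$ for $i \in \{0,1\}$, all of which I can compute in closed form.

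A short monotonicity check in $\mu \in [0,1]$ then identifies the worst history-compatible value of $\mu$, producing a lower bound that no longer depends on $w_{n-2}(e),\dots,w_0(e)$. After simplification using the explicit form of $p_{ij}(T)$, the answer rearranges into the shape stated in the proposition: the factor $(1-\hat{p}_e)$ reflects the stationary weight of $\{X_{(n-1)T}(e) = 0\}$, the factor $e^{-\hat{p}_e \hat{v}_e T}$ is the no-$0$-to-$1$-jump probability across the final interval, and the remaining fraction encodes the transition kernel of $X(e)$ over $[(n-1)T, nT]$. The main obstacle is precisely that the sequence $(w_n(e))_{n \geq 0}$ itself is not Markov, so a one-step Markov argument in $n$ is not available; the resolution is to reduce at each boundary $kT$ to the underlying chain $X(e)$ and exploit that the only information crossing such a boundary is the binary value $X_{kT}(e)$, so that the supremum over admissible histories reduces to a supremum over $\mu \in [0,1]$. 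Once this reduction is in place, the rest is mechanical algebra with the two-state transition kernel.
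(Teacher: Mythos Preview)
The paper does not prove this proposition; it simply cites \cite[Proposition~3.8]{linker2019contact}. Your approach --- reducing to the underlying two-state chain $X(e)$, applying the Markov property at the boundaries $nT$ and $(n-1)T$, and minimizing the resulting ratio of affine functions of $\mu\in[0,1]$ --- is a correct proof strategy and would establish the inequality.

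One small correction to your description of the outcome. With $A_i=\Pw(w_{n-1}(e)=0,\,X_{nT}(e)=0\mid X_{(n-1)T}(e)=i)$ and $B_i=\Pw(w_{n-1}(e)=0\mid X_{(n-1)T}(e)=i)$ one has $B_1=1$, $A_1=(1-\hat p_e)(1-e^{-\hat v_eT})$, $B_0=1-e^{-\hat p_e\hat v_eT}$, and $A_0=(1-\hat p_e)+\hat p_ee^{-\hat v_eT}-e^{-\hat p_e\hat v_eT}$. The M\"obius map $\mu\mapsto\frac{\mu A_0+(1-\mu)A_1}{\mu B_0+(1-\mu)B_1}$ is non-increasing on $[0,1]$ (this reduces to the elementary inequality $e^{-(1-p)x}\leq p+(1-p)e^{-x}$ for $x\geq0$, $p\in[0,1]$), so the minimum is $A_0/B_0$, attained at $\mu=1$. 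Combined with the factor $e^{-\hat p_e\hat v_eT}$ from the last interval, your argument yields the lower bound $e^{-\hat p_e\hat v_eT}\,A_0/B_0$, which is in fact \emph{strictly larger} than the stated $\delta_e=(1-\hat p_e)\,e^{-\hat p_e\hat v_eT}\,A_0/B_0$. So the factor $(1-\hat p_e)$ does not arise from your minimization as you suggest; rather, you simply observe at the end that your sharper bound dominates $\delta_e$, and the proposition follows a fortiori.
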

The next lemma allows us to compare a family of dependent Bernoulli random variables with an independent family provided that  we have a lower bound on the conditional probabilities.
\begin{lemma}\label{BerComp}
	Let $(X_{n})_{n\in \N_0}$ be a family of Bernoulli random variables such that 
	\begin{align*}
		\Pw(X_{n}=1|X_{n-1},\dots,X_{0})\geq q \in(0,1), \quad n \in \N_0.
	\end{align*}
	Then there exists 
	a family of i.i.d\@. Bernoulli random variables $(X'_{n})_{n\in\N_0}$ such that $\Pw(X_n'=1)=q$ and $X_{n}\geq X'_{n}$ almost surely for every $n\in\N_0$.
\end{lemma}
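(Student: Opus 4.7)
The plan is to construct the $(X_n')$ by a thinning procedure, using auxiliary randomness to reduce each $X_n$ (which may have large conditional success probability) down to exactly a Bernoulli$(q)$ while preserving $X_n' \leq X_n$ almost surely.

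First I would enlarge the underlying probability space to carry an i.i.d.\ sequence $(U_n)_{n\in\N_0}$ of Uniform$[0,1]$ random variables, independent of $(X_n)_{n\in\N_0}$. Let $\cF_n := \sigma(X_0,\dots,X_n,U_0,\dots,U_n)$ and let $p_n$ be a regular version of $\Pw(X_n=1\mid X_{n-1},\dots,X_0)$, so that $p_n$ is $\cF_{n-1}$-measurable and $p_n\geq q$ by hypothesis. Then I would define
\begin{equation*}
X_n' := X_n \cdot \1_{\{U_n \leq q/p_n\}} \quad (\text{with } q/p_n := 1 \text{ on } \{p_n=0\}, a set of probability zero)).
\end{equation*}
Clearly $X_n'\leq X_n$ pointwise, so the coupling property is automatic.

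The key step is to verify that $(X_n')$ is i.i.d.\ Bernoulli$(q)$. Since $U_n$ is independent of $\cF_{n-1}\vee\sigma(X_n)$ and uniform, one computes
\begin{equation*}
\Pw(X_n'=1\mid \cF_{n-1})
= \E\!\left[X_n \cdot \Pw(U_n\leq q/p_n\mid \cF_{n-1}\vee\sigma(X_n))\,\middle|\,\cF_{n-1}\right]
= p_n\cdot\frac{q}{p_n}=q.
\end{equation*}
Since $\sigma(X_0',\dots,X_{n-1}')\subset\cF_{n-1}$, the tower property then gives $\Pw(X_n'=1\mid X_0',\dots,X_{n-1}')=q$. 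Together with $\Pw(X_0'=1)=q$ (obtained analogously), this characterises $(X_n')$ as an i.i.d.\ Bernoulli$(q)$ sequence, completing the proof.

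There is no serious obstacle here; the only point requiring minor care is ensuring that the conditional probability $p_n$ admits a regular $\cF_{n-1}$-measurable version so that the ratio $q/p_n$ is well-defined and measurable, which is standard. The construction is essentially the one used in a number of comparison arguments for dependent Bernoulli sequences, and is the natural analogue on $\N_0$ of a stochastic domination result such as Strassen's theorem adapted to the filtered setting.
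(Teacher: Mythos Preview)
Your proof is correct and follows essentially the same thinning construction as the paper: introduce independent uniforms and set $X_n'=X_n\cdot\1_{\{U_n\le q/p_n\}}$, then verify that $\Pw(X_n'=1\mid \cF_{n-1})=q$ and conclude independence via the tower property. The only (minor) difference is that the paper makes the thinning ratio measurable with respect to $\sigma(X_0',\dots,X_{n-1}')$ rather than $\sigma(X_0,\dots,X_{n-1})$, which requires an extra line to check that $\Pw(X_n=1\mid X_0',\dots,X_{n-1}')\ge q$ still holds; your version avoids this by using the hypothesis directly and is slightly cleaner.
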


\begin{proof}
	First of all we set $p_0:=\Pw(X_0=1)$ and for $n\geq 1$ and  $x_{n-1},\dots, x_0\in \{0,1\}$,
	\begin{equation*}
		p_n(x_{n-1},\dots,x_0):=\Pw(X_n=1|X_{n-1}=x_{n-1},\dots,X_{0}=x_0),
	\end{equation*}
	which are by assumption all bounded below by $q$.
	Let $(\chi_n)_{n\in \N_0}$ be an i.i.d.\ family of  random variables, which are uniformly distributed on  $[0,1]$ and also independent of the family $(X_n)_{n\in \N_0}$.
	
	Next we iteratively define the desired family of random variables $(X_n')_{n\in \N_0}$
	along with a family of auxiliary random variables $(Y_n)_{n\in \N_0}$. First, let $Y_0:=\1_{\{\chi_0\leq q_0\}}$ for $q_0:= \frac{q}{p_0} \in [0,1]$. 
	Now set $X_0':=X_0Y_0$ such that  that $X_0'\leq X_0$ and 
	\begin{equation*}
		\Pw(X_0'=1)=\Pw(X_0=1)\Pw(Y_0=1)=p_0q_0=q.
	\end{equation*}
	Next suppose that we already defined $X'_{n-1},\dots,X'_{0}$ as a function of $X_{n-1},\dots,X_{0}$ and $\chi_{n-1}, \dots, \chi_{0}$. We set $p_0':=p_0$ and for $n \geq 1$
	and $x_{n-1},\dots, x_0\in \{0,1\}$,
	\begin{align*}
		p'_n(x_{n-1},\dots,x_0):=\Pw(X_n=1|X'_{n-1}=x_{n-1},\dots,X'_{0}=x_0) \geq q
	\end{align*}
	by our assumption on $(X_{n})_{n\in \N_0}$ and the construction of $X'_{n-1},\dots,X'_{0}$ as a function of $X_{n-1},\dots,X_{0}$ and another independent input from $(\chi_{n})_{n\in \N_0}$. This implies that 
	$$q_n(x_{n-1},\dots,x_{0}):=q\cdot\big(p'_n(x_{n-1},\dots, x_0)\big)^{-1} \in [0,1].$$
	We now set $Y_n:=\1_{\{\chi_n\leq q_n(X'_{n-1},\dots,X'_{0})\}}$ and $X_n':=X_nY_n$. It is again immediately clear that $X_n'\leq X_n$. Also, 
	\begin{align*}
		\Pw(X_n'=1|X'_{n-1},\dots,X'_{0})=\Pw(Y_n=1,X_n=1|X'_{n-1},\dots,X'_{0}).
	\end{align*}
	By choice $\chi_n$ is independent of $(X'_k)_{k\leq n-1}$ and $(X_k)_{k\leq n}$. The random variable $Y_n$ is a function of $\chi_n$ and all $(X'_k)_{k\leq n-1}$. 
	This yields that $Y_n$ and $X_n$ are conditionally independent given $(X'_k)_{k\leq n-1}$ , i.e.
	\begin{align*}
		\Pw(X_n'=1|X'_{n-1},\dots,X'_{0})=&\Pw(X_n=1|X'_{n-1},\dots,X'_{0})\Pw(Y_n=1|X'_{n-1},\dots,X'_{0})\\
		=& p'_n(X'_{n-1},\dots, X'_0) q_n(X'_{n-1},\dots,X'_{0})=q
	\end{align*}
	due to our choice of $q_n.$
	Since the right hand side is independent of the values of $X'_0,\dots,X_{n-1}'$ it follows that $X_n'$ is independent of $(X'_k)_{k\leq n-1}$ and that $\Pw(X_n'=1)=q$. 
	This concludes the proof.
\end{proof}
As a direct consequence of the bounds derived in Proposition~\ref{EdgeBound} together with Lemma~\ref{BerComp} as well as the independence of $w_n(e)$ and $w_m(e')$ for all $n,m\in \N$ as long as $e\neq e'$ we obtain the following  comparison of $(w_n(e))_{n\geq0}$ with a family of i.i.d.\ Bernoulli random variables.
\begin{corollary}\label{ComparisonLongRangePerc}
	Let $T>0$ and let $(w_{n}(e))_{(n,e)\in \N_0\times \cE}$ be defined as in \eqref{ClosedEdge} as well as $\delta_e$ as in \eqref{delta_edef}. Then there exists a family of independent Bernoulli variables  $(w'_{n}(e))_{(n,e)\in \N_0\times \cE}$ such that $\Pw(w'_{n}(e)=1)=\delta_e$ and $w_{n}(e)\geq w'_{n}(e)$ almost surely for all $(n,e)\in \N_0\times \cE$. 
\end{corollary}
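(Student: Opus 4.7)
The strategy is to combine Proposition~\ref{EdgeBound} (which provides the uniform lower bound $\delta_e$ on the one-step conditional probabilities along the time axis for each fixed edge) with Lemma~\ref{BerComp} (which turns such a bound into a genuine i.i.d.\ domination), applying the latter separately for each edge $e\in\cE$. The one additional ingredient needed is the independence \emph{between} different edges, which must be preserved by the construction.

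First, I would fix an edge $e\in \cE$ and consider the sequence $(w_n(e))_{n\in \N_0}$. By Proposition~\ref{EdgeBound} we have
\begin{equation*}
\Pw\bigl(w_n(e)=1 \,\big|\, w_{n-1}(e),\dots,w_0(e)\bigr) \geq \delta_e
\end{equation*}
for every $n\in\N_0$. Applying Lemma~\ref{BerComp} (with $q=\delta_e$) to this sequence yields an i.i.d.\ family $(w_n'(e))_{n\in\N_0}$ of Bernoulli($\delta_e$) variables with $w_n'(e)\leq w_n(e)$ almost surely.

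The key point is to carry out this construction simultaneously for all $e\in \cE$ while preserving independence across edges. Recall from the proof of Lemma~\ref{BerComp} that $w_n'(e)$ is built as a measurable function of $w_0(e),\dots,w_n(e)$ and an independent auxiliary i.i.d.\ sequence of uniforms $(\chi_n(e))_{n\in\N_0}$. I would take $(\chi_n(e))_{(n,e)\in \N_0\times \cE}$ to be an i.i.d.\ family of uniforms on $[0,1]$, jointly independent of the background process $\bfB$ (enlarging the probability space if necessary). Since the Poisson point processes $\Delta^{\mathrm{op}}_e,\Delta^{\mathrm{cl}}_e$ are independent across different edges, the sequences $(w_n(e))_{n\in\N_0}$ and $(w_n(e'))_{n\in\N_0}$ are independent for $e\neq e'$; combined with the independence of the auxiliary uniforms across edges, the resulting family $(w_n'(e))_{(n,e)\in\N_0\times \cE}$ is independent across $e$. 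Within each fixed $e$, the sequence is i.i.d.\ Bernoulli($\delta_e$) by Lemma~\ref{BerComp}, so altogether $(w_n'(e))_{(n,e)}$ is an independent family with the desired marginals, and the domination $w_n(e)\geq w_n'(e)$ holds simultaneously for all $(n,e)$ on an event of full probability.

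There is essentially no hard step here: Proposition~\ref{EdgeBound} is assumed, Lemma~\ref{BerComp} does the heavy lifting for a single edge, and independence across edges is inherited from the independence of the driving Poisson processes together with the freedom to choose the auxiliary randomisation independently for each $e$. The only mild subtlety is to make sure that the construction in Lemma~\ref{BerComp}, when executed edge-by-edge, does not introduce spurious couplings between edges, which is precisely why one introduces the two-parameter family of uniforms $(\chi_n(e))_{(n,e)}$ rather than reusing a single sequence.
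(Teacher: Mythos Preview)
Your proposal is correct and follows exactly the approach the paper indicates: the corollary is stated as a direct consequence of Proposition~\ref{EdgeBound}, Lemma~\ref{BerComp}, and the independence of $(w_n(e))_n$ across different edges $e$, and you have simply spelled out these ingredients. Your remark about using an independent two-parameter family of auxiliary uniforms $(\chi_n(e))_{(n,e)}$ to preserve cross-edge independence is the right way to make explicit what the paper leaves implicit.
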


Finally we are able to define the new infection process $\widetilde{\bfC}$. We do this analogously to the original process $\bfC$, just that we use infection events $(e,t)\in \Delta^{\inf}$ whenever $w'_{\lfloor t/T\rfloor}(e)=0$. This corresponds to the definition of an infection process as in \eqref{InfectionRatesWithBackground} with the background process $\bfB'$ given by 
\begin{equation}
	\label{connectingpath}    
	\bfB'_{t}= 1-w'_{\lfloor t/T\rfloor}(e), t \geq 0.
\end{equation}
We call a $\bfB'$-infection path as in Definition \ref{InfectionPath} a \emph{connecting path}.

Recall that in the definition of $\bfC$ we only consider infection events $(e,t)\in \Delta^{\inf}$ whenever $e\in\bfB_t$. But by definition this implies that $w_{\lfloor t/T\rfloor}(e)=0$, and by Corollary~\ref{ComparisonLongRangePerc} also  $w'_{\lfloor t/T\rfloor}(e)=0$. Hence, we only get more infection events for $\widetilde{\bfC}$ and thus $\bfC^C_t \subset \widetilde{\bfC}^C_t$ for all $t\geq 0$.

\subsection{Existence of an immunization phase}\label{ImmunizationRegime}
In this subsection we prove Theorem~\ref{ImmunThm} 
which states that for given speed parameter $\gamma>0$ there exists a $q_0\in (0,1)$ such that $\bfC$ dies out almost surely for all $q <q_0$ regardless of the choice of $\lambda>0$, i.e.~$\lambda_c(\gamma,q)=\infty$ for all $q<q_0$, and that $\lambda_c(\gamma,q)<\infty$ for all $q>q_0$.

The idea is that, if $q$ is small enough, then an arbitrary vertex will eventually be isolated for a long time, and therefore a potential infection cannot spread to another vertex before the isolated vertex is affected by a recovery event. 
To make this precise we recall $(w'_{n}(e))_{(n,e)\in \N_0\times \cE}$ from Corollary \ref{ComparisonLongRangePerc} and define $X=(X_{e,n})_{(e,n)\in \cE\times \N_0}$ where $X_{e,n}:=1-	w'_{n}(e)$, as well as  $U=(U_{x,n})_{(x,n)\in V\times \N_0}$ by
\begin{align}
	\label{Udef}
	U_{x,n}&:=
	\begin{cases}
		1 &  \text{if } \Delta^{\text{rec}}\cap (\{x\}\times [nT,(n+1)T))=\emptyset,\\
		0& \text{otherwise. } 
	\end{cases}
\end{align}
If $U_{x,n}=0$ and $\sum_{y\in V\backslash\{x\}}X_{\{x,y\},n}=0$, then an infection on vertex $x$ cannot possibly survive in the time interval $[nT,(n+1)T)$, for any $\lambda>0$. This follows since $\sum_{y\in V}X_{\{x,y\},n}=0$ implies that for the whole time interval all edges attached to $x$ are closed. Therefore, since $U_{x,n}=0$ we know that the vertex $x$ will recover and cannot be reinfected. Furthermore, between time $nT$ and $(n+1)T$ no infection can spread from $x$. Now we define a random graph $G_1$ with vertex set $V\times \N_0$ and add edges according to the following rules.
\begin{enumerate}
	\item If $U_{x,n}=1$, add an oriented edge from $(x,n)$ to $(x,n+1)$.
	\item If $X_{e,n}=1$ for $e=\{x,y\}$, add edges as if $U_{x,n}=1$, $U_{y,n}=1$ and add an unoriented edge between $(x,n)$ and $(y,n)$.
\end{enumerate}
The rules are visualized in \autoref{fig:IsolationGraph1}. Note that all ``horizontal" edges are unoriented such that they can be used in both directions, but all ``vertical" edges are oriented and only point upwards. 
\begin{figure}[t]
	\centering 
	\includegraphics[width=110mm]{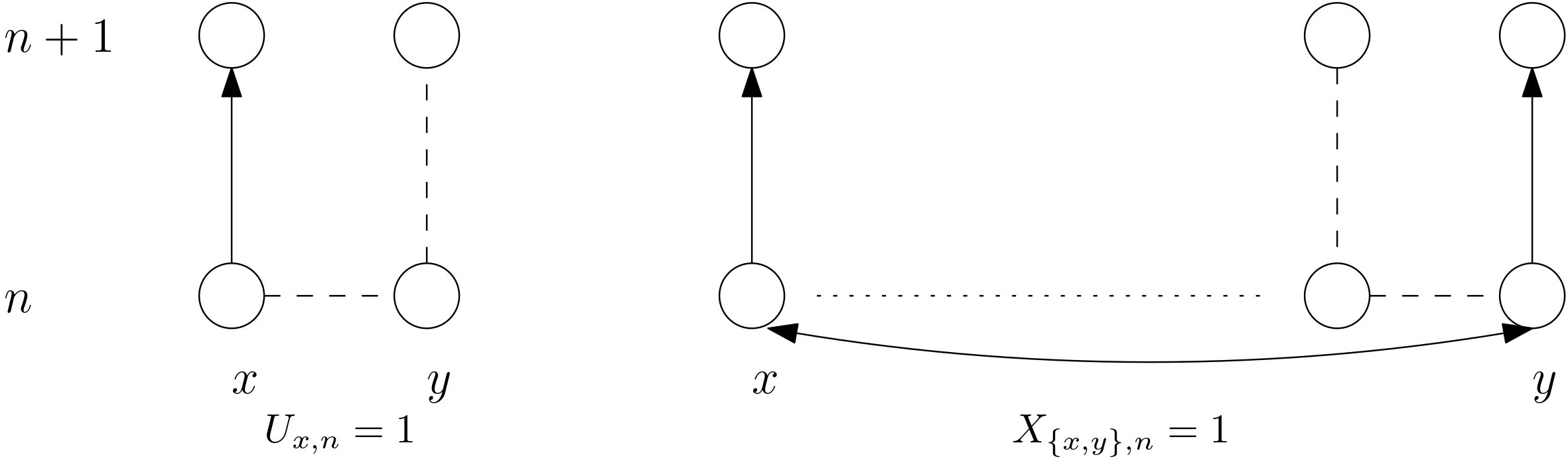}
	\caption{Illustration of the first and second rule. Solid lines indicate present edges and dashed lines absent edges. 
	}
	\label{fig:IsolationGraph1}
\end{figure}

\begin{definition}
	\label{validpathY}
	Let $G_1$ be the random graph constructed above and $C\subset V$ be the set of all initially infected individuals. We say that there exists a \emph{valid path} from  $C\times\{0\}$ to a point $(x,n)$ if there exists a sequence $x_0, x_1,\dots, x_m=x$ with $x_0\in C$ and $0=n_0\leq n_1\leq \dots\leq n_m=n$ such that there exist edges in $G_1$ from $(x_k,n_k)$ to $(x_{k+1},n_{k+1})$ for all $k\in \{0,\dots,m-1\}$.
	
	For every $n\in \N$ we denote by $Y_n=Y_n(U,X)$ the set of all vertices $x\in V$ such that there exists a valid path from $Y_0\times\{0\}$ to $(x,n)$.
\end{definition}
Note that a valid path travels along edges in the direction of their orientation, respectively in both directions in the case of unoriented edges.  In \autoref{fig:IsolationGraph2} we visualize a part of the graph $G_1$ with a valid path. 
\begin{figure}[!b]
	\centering 
	\includegraphics[width=90mm]{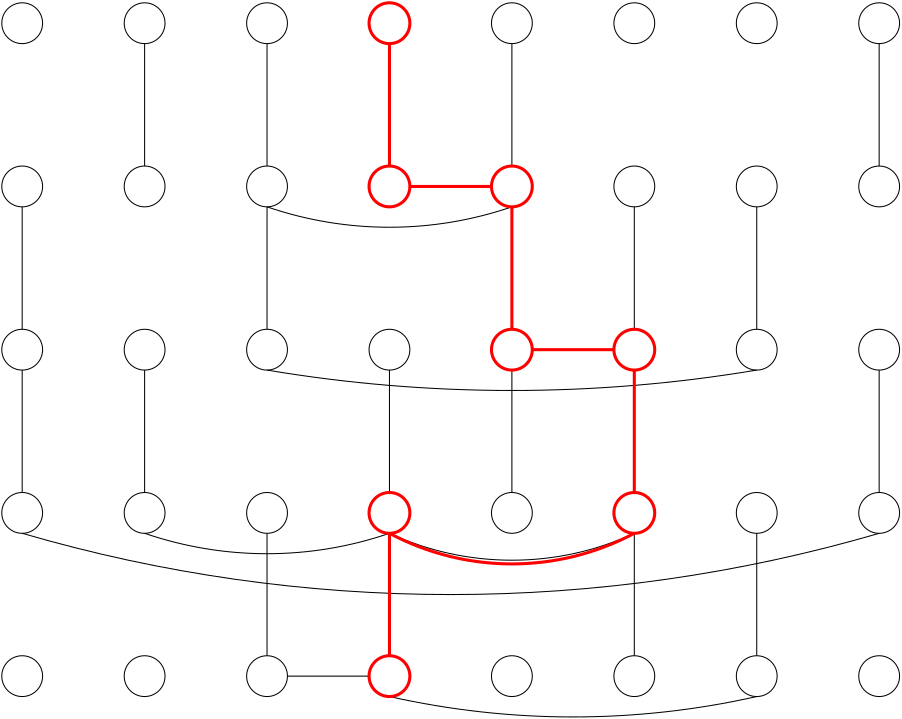}
	\caption{Illustration of a part of $G_1$. The red lines indicate a valid paths.}
	\label{fig:IsolationGraph2}
\end{figure}

\begin{lemma}\label{ExtinctionLemma}
	Let $T>0$, $n\in\N_0$ and $C\subset V$. Then if $C=Y_0$ we have $\widetilde{\bfC}^C_{nT}\subset Y_n$ for any $\lambda>0.$ 
	Thus, if $Y_n=\emptyset$ then $\widetilde{\bfC}^C_{nT}=\emptyset$
	and hence also  $\bfC^C_{nT}=\emptyset$ for any $\lambda>0.$ 
\end{lemma}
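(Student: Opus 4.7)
The plan is to proceed by induction on $n$. The base case $n=0$ is immediate: by construction $\widetilde{\bfC}^C_0 = C = Y_0$. For the inductive step, assume $\widetilde{\bfC}^C_{nT} \subset Y_n$ and take an arbitrary $x \in \widetilde{\bfC}^C_{(n+1)T}$. Then, by the graphical construction, there exists a $\bfB'$-infection path from $(y,0)$ to $(x,(n+1)T)$ for some $y \in C$.

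The first step is to localize this path at the boundary time $nT$. Writing the path as $y = x_0, x_1, \dots, x_M = x$ with jumping times $0 = t_0 < t_1 < \dots < t_M \leq t_{M+1} = (n+1)T$, let $k^*$ denote the largest index with $t_{k^*} \leq nT$ and set $z := x_{k^*}$. Truncating the path at time $nT$ (replacing $t_{k^*+1}$ by $nT$) yields a $\bfB'$-infection path from $(y,0)$ to $(z,nT)$: all infection and openness conditions are inherited, and the no-recovery condition on the final leg holds because $[t_{k^*}, nT) \subset [t_{k^*}, t_{k^*+1})$. Hence $z \in \widetilde{\bfC}^C_{nT}$, so by the induction hypothesis $z \in Y_n$.

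The second step is to analyze the portion of the path on $[nT,(n+1)T)$. Relabel the vertices traversed in this block as $z = z_0, z_1, \dots, z_l = x$, and set $e_j := \{z_{j-1}, z_j\}$. Since $\bfB'$ is constant throughout $[nT,(n+1)T)$, the fact that each $e_j$ is used by the path at a time in this block forces $w'_n(e_j) = 0$, i.e.~$X_{e_j,n} = 1$. By the second rule defining $G_1$, for each $j$ both the horizontal edge between $(z_{j-1}, n)$ and $(z_j, n)$ and the vertical edge at $(z_j, n)$ are present, so concatenation produces a valid path $(z,n) \to (z_1,n) \to \dots \to (z_l,n) \to (z_l, n+1)$. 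In the degenerate case $l=0$, the path remains at $z = x$ throughout $[nT,(n+1)T)$, which requires $\Delta^{\text{rec}} \cap (\{x\}\times[nT,(n+1)T)) = \emptyset$, i.e.~$U_{x,n}=1$, and the first rule of $G_1$ supplies the vertical edge $(x,n) \to (x,n+1)$. Either way $x \in Y_{n+1}$, completing the induction. The final assertion $\bfC^C_{nT} = \emptyset$ whenever $Y_n = \emptyset$ is then immediate from the domination $\bfC^C \subset \widetilde{\bfC}^C$ established earlier in the section.

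The main obstacle is conceptually minor but demands care: one must check that truncation at the block boundary $nT$ preserves the $\bfB'$-infection path property (so that the induction hypothesis applies to $z$), and one must split off the degenerate case $l=0$ in order to invoke the first rule of $G_1$ through $U_{x,n}=1$ rather than the second rule. Once this bookkeeping is in place, the translation of infection paths into valid paths in $G_1$ is essentially forced by the design of $\bfB'$, which was engineered precisely so that open edges correspond to entire blocks of openness.
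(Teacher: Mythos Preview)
Your proof is correct and follows essentially the same approach as the paper's own argument: both chop the connecting path at the block boundaries $mT$ and translate each block segment into a valid path in $G_1$, using the second rule when at least one edge is traversed and the first rule (via $U_{x,n}=1$) when the path stays put. The only cosmetic difference is that you organize this as an explicit induction on $n$, whereas the paper traces the full connecting path at once and argues block by block that $x_m\in Y_m$ for each $m$; the underlying bookkeeping (truncation at $nT$, the $l=0$ versus $l\geq 1$ dichotomy) is identical.
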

\begin{proof}
	In this proof we need the notion of a connecting path which is using the $\bfB'$ background defined in 
	\eqref{connectingpath} via the $w'_{n}(e).$
	If $x\in \widetilde{\bfC}^{C}_{nT}$ then there exists 
	a sequence of times $0 = t_0 < t_1 < \dots < t_{n'} <t_{n'+1}= nT$ 
	and space points $x_0,x_1',\dots, x_{n'}' = x$ with $x_0\in C$ such that
	$({\{x_{k-1}',x_{k}'\}},t_k)\in \Delta^{\inf}$ and $w'_{n_k}(\{x_{k-1}',x_{k}'\})=0$, respectively 
	$X_{\{x_{k-1}',x_{k}'\},n_k}=1$, where $n_k:=\lfloor t_k/T\rfloor$, for all $k \in\{ 1, \dots ,n'\}$ and  $\Delta^{\text{rec}}\cap\big(\{x_k'\}\times[t_k , t_{k+1} )\big)=\emptyset$ for all $k \in \{ 0, \dots , n'\}$. Let for $m \in \{1,\dots n\}$ the position of the path at time $mT$ be denoted by $x_m$, i.e. $x_m= x_{m'}'$ if $mT \in [t_{m'} , t_{m'+1} )$	so that $x_m\in \widetilde{\bfC}_{mT}$ for $m\in \{0,\dots,n\}$. 
	Now if we can show that $x_{m-1}\in \widetilde{\bfC}_{(m-1)T}$ and $x_m\in \widetilde{\bfC}_{mT}$ imply that $x_m\in Y_{m}$ the claim follows since $x_0\in Y_0=C$ by assumption.
	
	So if $x_{m-1}\neq x_m$ it means that the infection must have spread from $x_{m-1}$ to $x_m$ in the time interval $[(m-1)T,mT)$. But we already assumed the existence of a connecting path. Thus, we can  find $0 \leq m' \leq n'$ and $l\geq 1$ such that $x_{m-1}= x_{m'}'$ and $x_{m}= x_{m'+l}'$  	such that 
	$\big(\{x'_{m'+k},x'_{m'+k+1}\},t_{m'+k+1}\big)\in \Delta^{\text{inf}}$
	and $X_{\{x'_{m'+k},x'_{m'+k+1}\},m-1}=1$ for all $k\in \{0,\dots,l-1\}$, and thus by the second rule $x_m\in Y_m$.
	
	If $x_{m-1}=x_m$ then either there was no recovery event in the whole time interval $\big[(m-1)T,mT\big)$, and so by the first rule $x_m\in Y_m$ or the infection must have spread to another vertex and the vertex $x_m$ got reinfected. Then there must have been a vertex $x'_{m'}$ and a time $t_{m'+1}\in[(m-1)T,mT)$ such that $x'_{m'+1}=x_m$ and  $\big(\{x'_{m'},x'_{m'+1}\},t_{m'+1}\big)\in \Delta^{\text{inf}}$ as well as 
	$X_{\{x'_{m'},x'_{m'+1}\},m-1}=1$, 
	and therefore $x_m\in Y_m$ by the second rule.
	
	The second claim follows by the fact that $\bfC^C_{t}\subset\widetilde{\bfC}_t^C$ for all $t\geq 0$. Thus, if $\widetilde{\bfC}^C_{nT}=\emptyset$ then this implies that $\bfC^C_{nT}=\emptyset$.
\end{proof}
Obviously $(U_{x,n})_{(x,n)\in V\times \N_0}$ from \eqref{Udef} is a family of i.i.d.\ random variables with $\Pw(U_{x,n}=1)=e^{-rT}$ and independent of the family $(X_{e,n})_{(e,n)\in \cE\times \N_0}$, which 
consists of independent Bernoulli random variables such that $\Pw(X_{e,n}=1)= 1-\delta_{e}$. The next result 
states a sufficient condition for the extinction of $Y$, which can be proven in exactly the  same way as \cite[Lemma 3.7]{linker2019contact}.
\begin{lemma}\label{Immunization}
	Let $x\in V$. If $\E[|Y_1||Y_0=\{x\}]<1$ then $Y$ goes extinct almost surely for any finite $A\subset V$ as initial state.
\end{lemma}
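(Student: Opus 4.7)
The plan is to show that under the hypothesis $\mu := \E[|Y_1| \mid Y_0 = \{x\}] < 1$, the expectation $\E[|Y_n| \mid Y_0 = A]$ decays geometrically in $n$, and then to conclude almost sure extinction by a Markov inequality plus Borel--Cantelli argument. This is essentially a subcritical branching process comparison.

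First I would verify that $(Y_n)_{n \in \N_0}$ is a Markov chain with absorbing state $\emptyset$. By the construction of $G_1$, the set $Y_{n+1}$ is a deterministic function of $Y_n$ together with $(X_{e,n})_{e \in \cE}$ and $(U_{x,n})_{x \in V}$: one first closes $Y_n$ under horizontal $X$-edges at level $n$, then keeps those vertices $y$ in the closure for which either $U_{y,n}=1$ or some $X_{\{y,z\},n}=1$ (so that a vertical edge from $(y,n)$ to $(y,n+1)$ exists). Since $(X_{e,n}, U_{x,n})_{e,x}$ is independent of $(X_{e,k}, U_{x,k})_{k \leq n-1}$ and hence of $Y_0,\dots,Y_n$, the Markov property follows. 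Also, if $Y_n = \emptyset$ then there is no valid path to any point at level $n+1$, so $Y_{n+1}=\emptyset$.

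Second, I would exploit additivity: from Definition~\ref{validpathY}, using the same realisation of $G_1$,
$$Y_n(A \cup B) = Y_n(A) \cup Y_n(B),$$
since any valid path from $(A \cup B)\times\{0\}$ starts at a vertex of $A$ or a vertex of $B$. Combined with the translation invariance of the laws of $(X_{e,n})$ and $(U_{x,n})$ (inherited from the translation invariance of $(p_e)$, $(v_e)$, and $r$), this gives, for any finite $C \subset V$,
$$\E[|Y_1| \mid Y_0 = C] \leq \sum_{x \in C} \E[|Y_1(\{x\})|] = |C|\,\mu,$$
with $\mu<1$ by hypothesis. Applying this identity at each step together with the Markov property then yields $\E[|Y_{n+1}|\mid Y_n]\leq |Y_n|\,\mu$ and hence, by induction,
$$\E[|Y_n| \mid Y_0 = A] \leq |A|\,\mu^n.$$

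Finally, since $|Y_n|$ is a non-negative integer, Markov's inequality gives $\Pw(Y_n \neq \emptyset \mid Y_0 = A) \leq |A|\mu^n$, which is summable in $n$. Borel--Cantelli then implies that $\{Y_n \neq \emptyset\}$ occurs only finitely often almost surely; since $\emptyset$ is absorbing, we conclude that $Y_n = \emptyset$ for all sufficiently large $n$ almost surely. No step in this outline is genuinely hard: the only point that requires slight care is the clean separation of the randomness used at each level $n$, which is needed both for the Markov property and for the bound $\E[|Y_1|\mid Y_0=C]\leq |C|\mu$ to iterate. Both follow directly from the construction of $G_1$ from the mutually independent families $(X_{e,n})$ and $(U_{x,n})$, so the argument is parallel to that of \cite[Lemma~3.7]{linker2019contact}.
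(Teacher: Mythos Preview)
Your proposal is correct and matches the approach the paper intends: the paper does not spell out the proof but simply refers to \cite[Lemma~3.7]{linker2019contact}, and your argument---Markov property from level-independence of the $(X_{\cdot,n},U_{\cdot,n})$, additivity plus translation invariance to get $\E[|Y_1|\mid Y_0=C]\leq |C|\mu$, iteration to $\E[|Y_n|]\leq |A|\mu^n$, then Markov's inequality and Borel--Cantelli---is exactly that standard argument. Your description of how $Y_{n+1}$ is built from $Y_n$ and the level-$n$ variables is also accurate, so the separation of randomness needed for both the Markov property and the one-step bound is indeed immediate from the construction.
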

Now we can finally show Theorem \ref{ImmunThm}.
\begin{proof}[Proof of Theorem~\ref{ImmunThm}]
	We fix $\gamma>0$ and let $\lambda>0$ be arbitrary.
	Since Lemma~\ref{ExtinctionLemma} states that extinction of $Y$ implies extinction of the CPDLP, Theorem~\ref{ImmunThm} follows from Lemma~\ref{Immunization} if we can prove the condition stated in that lemma.
	Fix an arbitrary $x\in V$ and set $Y_0=\{x\}$ as initial value. We can calculate that
	\begin{align}\label{SplitUpOfExpect}
		&\E[|Y_1|]=\E[\1_{\{\exists y\in V: X_{\{x,y\},0}=1\}}|Y_1|]+\Pw\Big(\bigcap_{y\in V}\{X_{\{x,y\},0}=0\}\Big)\E[U_{x,0}].
	\end{align}
	Let us choose $0<\varepsilon<1$ arbitrarily but fixed. For the last term, we find a $T_1>1$ large enough such that 
	\begin{equation}\label{BoundOnDeath}
		\E[U_{x,0}]=e^{-T}<\frac{\varepsilon}{3}
	\end{equation}
	for all $T>T_1$. For the first term	we see that $Y_1$ is actually the connected component containing $x$ formed by a long range percolation model with probabilities $(1-\delta_{e})_{e\in \cE}$ with $\delta_{e}$ as in Proposition~\ref{EdgeBound},	which implies that
	\begin{align}\label{TechnicalDelta}
		\begin{aligned}
			1-\delta_e= 1-e^{-\hat{p}_{e}\hat{v}_{e}T}+\hat{p}_{e}e^{-\hat{p}_{e}\hat{v}_{e}T}+(1-\hat{p}_{e})\hat{p}_{e}\frac{1-e^{-\hat{v}_{e}T}}{e^{\hat{p}_{e}\hat{v}_{e}T}-1}\leq \hat{p}_e\hat{v}_eT+\hat{p}_e+\frac{1}{\hat{v}_eT},
		\end{aligned}	
	\end{align}
	for all $e\in \cE$. Here, we have used that $1-x\leq e^{-x}$ and $1+x\leq e^x$ for $x\geq0$. Recall that $\hat{p}_k=q p_k$. For the remainder of this proof we choose $q=q(T):=T^{-2}$ and see that
	\begin{align}\label{AuxiliaryRates}
		1-\delta_{e}(q(T),T)\leq \frac{1}{T}p_{e} \hat{v}_{e}+\frac{1}{T^2} p_{e}+\frac{1}{\hat{v}_{e}T}=:b_e(T)
	\end{align}
	for all $e\in \cE$. We attach $T$ as an index to $Y_1(T)$ since by the choice of $q$ the probabilities $(1-\delta_{e})_{e\in \cE}$ determining the connected components only depend on the choice of $T$. Next we will show that there exists $T_2>0$ and an $M=M(\varepsilon,T_2)>0$ such that
	\begin{align}\label{BoundBigValues}
		\E[\1_{\{|Y_1(T)|>M\}}|Y_1(T)|]<\frac{\varepsilon}{3}
	\end{align}
	for all $T>T_2$. For this, let $Z(T)$ be the connected component containing $x$ formed by a long range percolation model with probabilities $(b_e(T))_{e\in \cE}$ such that $Y_1(T)\subset Z(T)$ for every $T>0$. This coupling  is possible since \eqref{AuxiliaryRates} holds for all $e\in \cE$.
	By Assumption~\eqref{ExistenceAssumption} and Remark~\ref{Rem:pIsSummable} it follows that $(b_{\{x,y\}}(T))_{y\in V}$ is summable for all $x\in V$ and $T>0$. Furthermore, $b_e(T)$ is decreasing in $T$ and $b_e(T)\to 0$ as $T\to \infty$ for all $e\in \cE$. Therefore, by Lebesgue's  dominated convergence theorem we see that there exists a $T_2\geq T_1$ large enough such that
	\begin{equation*}
		\sum_{y\in V}b_{\{x,y\}}(T)<1
	\end{equation*}
	for all $T\geq T_2$. For this choice of $T_2$ the integrability of $|Z(T_2)|$ follows by Proposition~\ref{lowerBound&integrability}, i.e.~$\E[|Z(T_2)|]<\infty$. Thus, for every $\varepsilon>0$ there exist an $M=M(\varepsilon,T_2)>0$ such that
	\begin{equation*}
		\E[\1_{\{|Z(T_2)|>M\}}|Z(T_2)|]<\frac{\varepsilon}{3}.
	\end{equation*}
	Since $b_e(T)$ is monotone decreasing in $T$ for all $e\in \cE$
	\begin{align*}
		\E[\1_{\{|Z(T)|>M\}}|Z(T)|]\leq\E[\1_{\{|Z(T_2)|>M\}}|Z(T_2)|]<\frac{\varepsilon}{3}
	\end{align*} 
	for all $T>T_2$. Furthermore, since by definition $Y_1(T)\subset Z(T)$ for all $T$ we see that
	\begin{align*}
		\E[\1_{\{|Y_1(T)|>M\}}|Y_1(T)|]<\frac{\varepsilon}{3}.
	\end{align*}
	for all $T>T_2$.  Using this and the bounds \eqref{BoundOnDeath} and \eqref{BoundBigValues} in \eqref{SplitUpOfExpect} we obtain
	\begin{align}
		\nonumber
		\E[|Y_1|]
		<&\E[\1_{\{\exists y\in V: X_{\{x,y\},0}=1\}}|Y_1|]+\frac{\varepsilon}{3}\\
		\nonumber
		\leq& \E[\1_{\{|Y_1|> M\}}|Y_1|]+\E[\1_{\{|Y_1|\leq M\}}\1_{\{\exists y\in V: X_{\{x,y\},0}=1\}}|Y_1|]+\frac{\varepsilon}{3}\\
		<&\frac{\varepsilon}{3}+M\underbrace{\Pw(\{\exists y\in V: X_{\{x,y\},0}=1\}\cap \{|Y_1|\leq M\})}_{\leq \Pw( \exists y\in V: X_{\{x,y\},0}=1)}+\frac{\varepsilon}{3}
		\label{AlmostFinished}
	\end{align} 
	for all $T>T_2$. By using subadditivity of the measure $\Pw$ we get that
	\begin{align*}
		\Pw( \exists y\in V: X_{\{x,y\},0}=1)=\Pw\Big( \bigcup_{y\in V}\{ X_{\{x,y\},0}=1\}\Big)\leq \sum_{y\in V}\big(1-\delta_{\{x,y\}}(q(T),T)\big) \rightarrow 0
	\end{align*}
	as $T \rightarrow \infty$	by Lebesgue's  dominated convergence theorem since
	$1-\delta_{\{x,y\}}(q(T),T)\leq b_{\{x,y\}}(T_2)$ for all $T\geq T_2$ and $(b_{\{x,y\}}(T_2))_{y\in V}$ is summable for every $x\in V$. 	This implies that there exists a $T_3\geq T_2$ such that
	\begin{align}\label{LastIngriedient}
		\Pw( \exists y\in V: X_{\{x,y\},0}=1)\leq \sum_{y\in V}(1-\delta_{\{x,y\}}(q(T),T))<\frac{\varepsilon}{3M}
	\end{align}
	for all $T>T_3$. Now \eqref{AlmostFinished} and \eqref{LastIngriedient} imply that for all $T>T_3$ and thus for all  $q=q(T)=T^{-2}<q_0'$ if we set $q_0':=q(T_3)=T_3^{-2}>0$ we have
	\begin{align*}
		\E[|Y_1|]<\varepsilon<1.
	\end{align*}
	We now let $q_0\geq q_0'>0$ be maximal such that $\lambda_c(\gamma,q)=\infty$ for all $q<q_0$. By the fact that $q \mapsto \lambda_c(\gamma,q)$ is monotone non-increasing we then know that $\lambda_c(\gamma,q)<\infty$ for all  $q>q_0$. To complete the proof it now only remains to show that $\gamma \mapsto q_0(\gamma)$ is a monotone non-increasing function on $(0,\infty)$, i.e.~that for $0<\gamma_1<\gamma_0$ we have $q_0(\gamma_1) \geq q_0(\gamma_0)$. For this
	suppose that we have  $q_0(\gamma_1) < q_0(\gamma_0)$. Then   for any $q \in (q(\gamma_1),q(\gamma_0))$
	we have that $\lambda_c(\gamma_0,q)=\infty$ while $\lambda_c(\gamma_1,q)<\infty$. 
	But this contradicts the fact that by Proposition~\ref{WeakSpeedMonotonicity} we have 
	$\gamma_1^{-1} \lambda_c(\gamma_1,q)> \gamma_0^{-1} \lambda_c(\gamma_0,q)$, and so we are done.
\end{proof}

\subsection{Extinction for slow background speed}\label{SlowSpeedRegime}
In this subsection we study the behavior of the survival probability as $\gamma\to 0$ and show 
Theorem~\ref{AsymptoticSlowSpeedThm}. On general graphs $G=(V,E)$ 
we already obtained partial results on the behavior of the critical infection rate for slow speed of the background process, which we stated in Corollary~\ref{PartialResultsSlowSpeed}: 
There exists a $q_1\in(0,1]$ so that for every $q< q_1$ there exists a $\gamma_0=\gamma_0(q)>0$ such that $\lambda_c(\gamma,q)=\infty$ for all $\gamma<\gamma_0$. Now we restrict ourselves to the one dimensional integer lattice $G=(V,E)$ with $V=\Z$ and $E=\{\{x,y\}\subset \Z: |x-y|=1\}$. In this case we can fully characterize the behavior of the critical infection rate as $\gamma\to 0$ if we assume that \eqref{StrongerAssumption} is satisfied, i.e. 
\begin{equation*}
	\sum_{y\in \N}yv_{\{0,y\}}p_{\{0,y\}}<\infty \quad \text{ and } \quad
	\sum_{y\in \N}yv_{\{0,y\}}^{-1}<\infty. 
\end{equation*}
Obviously this assumption already implies \eqref{ExistenceAssumption}. Furthermore by \eqref{TechnicalDelta} it  follows that
\begin{align}\label{TechnicalDelta2}
	\sum_{y\in \N} y(1-\delta_{\{x,y\}})\leq \sum_{y\in \N} y\Big(\hat{p}_{\{x,y\}} \hat{v}_{\{x,y\}}T+\hat{p}_{\{x,y\}}+\frac{1}{\hat{v}_{\{x,y\}}T}\Big)<\infty
\end{align}
for all $x\in \Z$ due to  \eqref{StrongerAssumption}. In the remainder of this section we focus on proving Theorem~\ref{AsymptoticSlowSpeedThm}. We achieve this by modifying and adapting the strategy used in \cite{linker2019contact}. 

Recall  $\widetilde{\bfC}$  with background $\bfB'$ defined in \eqref{connectingpath}, which is characterized by the  $w'_n(e)$ of Corollary \ref{ComparisonLongRangePerc}. As in the previous section we construct a type of oriented long range percolation model which will be coupled to $\widetilde{\bfC}$ in such a way that if this model goes extinct so does $\widetilde{\bfC}$. Since we know that extinction of $\widetilde{\bfC}$ implies extinction of the CPDLP $\bfC$ this will lead to the proof of Theorem~\ref{AsymptoticSlowSpeedThm}.

One key point of the arguments used in \cite{linker2019contact} was that in an independent percolation model on $\Z$ with $p<1$ no infinite connected component occurs, and thus the percolation almost surely partitions $\Z$ into finite connected components. As we saw in Proposition~\ref{CutPointTheo} the long range percolation exhibits a similar behavior if $\sum_{y\in \N} y(1-\delta_{\{0,y\}})<\infty$.

Recall from Definition~\ref{CutPoint} that a cut-point $m$ for a long range percolation model is a point such that no edge $\{x,y\}$ with $x\leq m<y$ is present. In comparison to the nearest neighbor case one major problem is that in the long range percolation model that we will use, which is defined via the  $w'_n(e)$ of Corollary \ref{ComparisonLongRangePerc}, the presence of cut points at two different vertices is not independent. In fact the events $(\{k \text{ is a cut-point}\})_{k\in\Z}$ are decreasing events, and thus positively correlated by the
FKG inequality, see \cite[Theorem~2.4]{grimmett1999percolation}. But this implies that also the events $(\{k \text{ is no cut-point}\})_{k\in\Z}$ are positively correlated. Therefore, we need to adjust the construction in such a way that we can deal with these correlations.
\begin{definition}
	Let $n,K_0\in \N$ and $T>0$. We call $m\in V$ an \textit{$(n,K_0)$-cut} if $w'_n(\{x,y\})=1$ for all $x\leq m <y$ with $|x-y|\leq 2K_0$.
\end{definition}
We call all edges $e=\{x,y\}$ with 
length  $|x-y| \leq 2K_0$ \emph{short edges} and
denote by $\cB_n^{K_0}$ the $\sigma$-algebra containing information of all $w'_n(e)$ of all short edges in time step $n$, i.e.
\begin{equation}\label{EdgeSigmaAlgebra}
	\cB_n^{K_0}:=\sigma\big(\big\{w'_n(\{x,y\}): |x-y|\leq 2K_0\big\}\big).
\end{equation}
Note that for any $m$ the event that $m$ is an $(n,K_0)$-cut is contained in $\cB_n^{K_0}$.

Now let $r_0\in\N$ and define for $k\in \Z$,
\begin{align*}
	M_k&:=[k(2K_0+r_0),(k+1)(2K_0+r_0)-1]\cap \Z\\
	M_k^{\text{left}}&:=[k(2K_0+r_0),k(2K_0+r_0)+K_0-1]\cap \Z\\
	M_k^{\text{mid}}&:=[k(2K_0+r_0)+K_0,k(2K_0+r_0)+K_0+r_0-1]\cap \Z\\	
	M_k^{\text{right}}&:=[k(2K_0+r_0)+K_0+r_0,(k+1)(2K_0+r_0)-1]\cap \Z
\end{align*}
The collection $(M_k)_{k\in \Z}$ forms a disjoint partition of $\Z$. Furthermore, for every $k\in\Z$ the sets $M_k^{\text{left}}, M_k^{\text{mid}}$ and $ M_k^{\text{right}}$ are disjoint and $M_k=
M_k^{\text{left}}\cup M_k^{\text{mid}}\cup M_k^{\text{right}}$. We also want to remark that $|M_k|=2K_0+r_0$, $|M_k^{\text{mid}}|=r_0$ and $|M_k^{\text{left}}|=|M_k^{\text{right}}|=K_0$. See Figure~\ref{fig:CoverCutPoints} for a illustration. 
\begin{figure}[!b]
	\centering 
	\includegraphics[width=110mm]{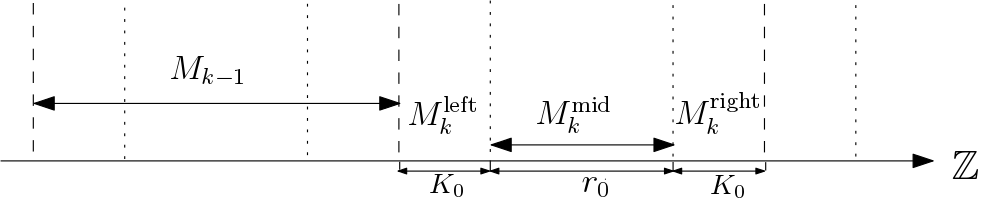}
	\caption{Illustration of the sets $M_{k-1}$, $M_k^{\text{left}}, M_k^{\text{mid}}$ and $ M_k^{\text{right}}$.}
	\label{fig:CoverCutPoints}
\end{figure}
Next we define for $k\in \Z$ and $n \in \N_0$ the random variables
\begin{align}\label{XVariables}
	X_{k,n}:=
	\begin{cases}
		1 &  \text{if no $(n,K_0)$-cut lies in } M_k^{\text{mid}},\\
		0& \text{otherwise.}
	\end{cases}
\end{align}
If $X_{k,n}=0$ then there exists a barrier in $M^{\text{mid}}_k$ during the time interval $[nT, (n+1)T)$ which the infection $\widetilde{\bfC}$ cannot overcome via short edges.

We will now partition the space-time strip $\Z \times [nT,(n+1)T)$ for every $n$, where $T>0$, according to the presence of $(n,K_0)$-cuts. Let $c_{k,n}$ be the rightmost $(n,K_0)$-cut in $M_k^{\text{mid}}\times [nT,(n+1)T)$ and if none is present, then set it equal to the right boundary of $M_k^{\text{mid}}$. Now set $D_{k,n}:=[c_{k-1,n}+1,c_{k,n}]\cap \Z$. We see that $S_{k,n}:=D_{k,n}\times [nT,(n+1)T)$ is a disjoint space-time partition of $\Z\times [0,\infty)$, which depends only on $\cB_n^{K_0}$.
See Figure~\ref{fig:DynamicPartition} for an illustration.
\begin{figure}[t]
	\centering 
	\includegraphics[width=105mm]{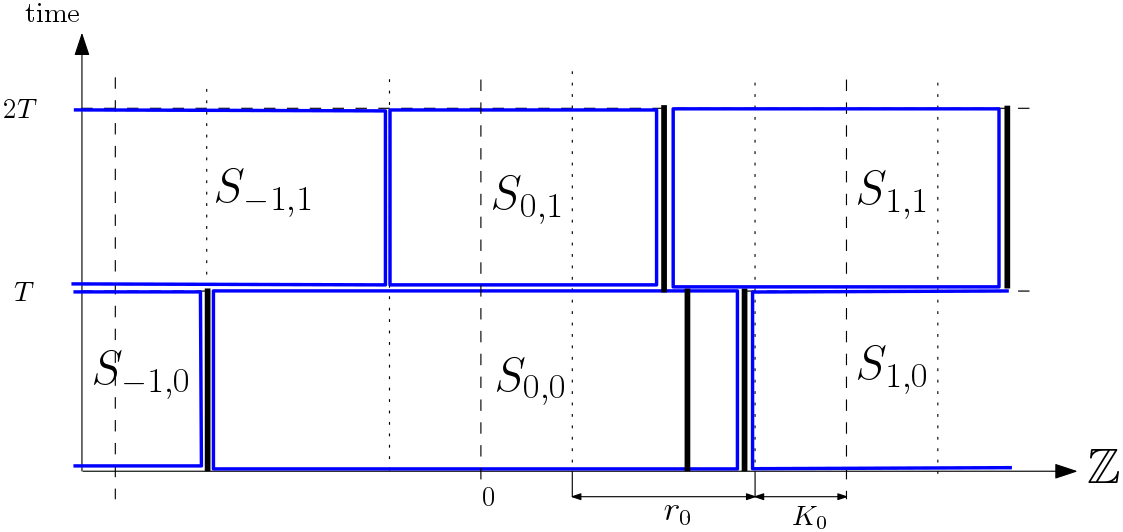}
	\caption{An illustration of a possible partition. The thick black lines represent $(n,K_0)$-cuts and the blue boxes the resulting partition.}
	\label{fig:DynamicPartition}
\end{figure}

The boxes can only be of bounded size and we see from the construction that
\begin{align}\label{MaxMinPar}
	\begin{aligned}
		D_{k,n}\supset D_{k}^{\text{min}}:=&M_{k-1}^{\text{right}}\cup M_k^{\text{left}},\\ 
		D_{k,n}\subset D_{k}^{\text{max}}:=& M^{\text{mid}}_{k-1}\cup M_{k-1}^{\text{right}}\cup M_k^{\text{left}} \cup M^{\text{mid}}_{k}=M^{\text{mid}}_{k-1}\cup D_{k}^{\text{min}} \cup M^{\text{mid}}_{k}.
	\end{aligned}	
\end{align}
This provides us with an upper and lower bound on the number of vertices contained in  $D_{k,n}$, namely $2K_0 \leq |D_{k,n}|\leq 2K_0+2r_0$. We define $S_{k,n}^{\text{min}}:=D_{k}^{\text{min}}\times [nT,(n+1)T)$ and $S_{k,n}^{\text{max}}:=D_{k}^{\text{max}}\times [nT,(n+1)T)$ as the minimal and maximal possible space-time box with $S_{k,n}^{\text{min}}\subset S_{k,n}\subset S_{k,n}^{\text{max}}$.

Recall that $X_{k,n}$ provides us with the information whether it is possible for the infection to traverse $M_{k}^{\text{mid}}$ via \textit{short} edges. So if $X_{k-1,n}=0$ and $X_{k,n}=0$
then the boundaries of $S_{k,n}$ are $(n,K_0)$-cuts and the infection can only leave this box via \textit{long edges} $e=\{x,y\}$ with 
length  $|x-y| \geq  2K_0$. In this case we call the box $S_{k,n}$ \textit{isolated}. In order to describe the possibility of infection in $\widetilde{\bfC}$ via long edges between $S_{k,n}$ and  $S_{l,n}$  we define for $k\neq l \in \Z$ and $n \in \N_0,$
\begin{align}
	\label{Wdef}
	W_{\{k,l\},n}:=
	\begin{cases}
		1 &	\text{if there exists a long edge } e=\{x,y\} \text{ with } |x-y|>2K_0\\
		&	\text{which connects } S_{k,n} \text{ to } S_{l,n} \text{ at some } t\in[nT,(n+1)T),\\
		0& \text{otherwise.} 
	\end{cases}
\end{align}
See Figure~\ref{fig:Connecting} for a visualization in the case $l=k+1.$
\begin{figure}[!b]
	\centering 
	\includegraphics[width=80mm]{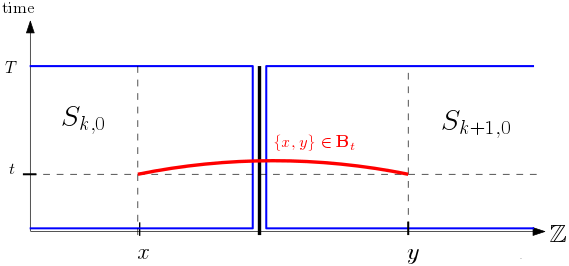}	
	\caption{The thick black line represents again a $(0,K_0)$-cut and the blue boxes a part of the resulting partition. Here we visualize the case when $W_{\{k,k+1\},0}=1$.}
	\label{fig:Connecting}
\end{figure}

Note that by definition $W_{\{k,l\},n}=W_{\{l,k\},n}$, and thus we will assume $k<l$.
The idea is that for large $K_0$ a transmission of the infection via a long edge will be unlikely since they will most likely not be open. In addition, we intend to control the survival via short edges in (isolated) boxes $S_{k,n}$ for $k\in \Z$ and $n \in \N_0$. Recall that a connecting path is a  $\bfB'$-infection path used by $\widetilde{\bfC}$. We define 
\begin{align}
	\label{Udef2}
	U_{k,n}:=
	\begin{cases}
		1 &  \text{if there exists a connecting path starting at } 	nT \text{ that}\\
		&  \text{is ending at } (n+1)T \text{ and is contained in } S_{k,n},\\
		0 & \text{otherwise,} 
	\end{cases}
\end{align}
see Figure~\ref{fig:Surviving} for an illustration. 
If $U_{k,n}=0$ then an infection contained in an isolated box $S_{k,n}$ 
can only survive via transmission along long edges.
\begin{remark}\label{PropertiesOfXWU}
	Let us summarize some properties of the variables we just defined. 
	\begin{enumerate}
		\item The intervals $D_{k,n}$ and boxes $S_{k,n}$ for $k \in \Z$ are measurable with respect to $\cB_n^{K_0}$. For $n \neq n'$ we have that 
		$D_{k,n}$ and $D_{l,n'}$ are independent (and thus also $S_{k,n}$ and $S_{l,n'}$ are independent) for all $k,l \in \Z.$
		\item 
		The $X_{k,n}$ variables from \eqref{XVariables} depend only on short edges of maximal length $2K_0$. Since the minimal distance between $M^{\text{mid}}_{k}$ and $M^{\text{mid}}_{l}$ is larger than $2K_0$ for $k\neq l$ we see that $X_{k,n}$ and $X_{l,n'}$ are independent if $k\neq l$ for all $n,n'\in \N_0$.
		\item
		$U_{k,n}$ from \eqref{Udef2} only depends on edges $\{x,y\}$ with $x,y\in D_{k,n}$ (and recovery events).
		On the other hand, $W_{\{k,l\},n}$ from \eqref{Wdef} only depends on edges $\{x',y'\}$ such that $x'\in D_{k,n}$ and $y'\in D_{l,n}$. 
		Since $D_{k,n}\cap D_{l,n}=\emptyset$ for $k\neq l$ we have that $U_{k',n'}$ and $W_{\{k,l\},n}$ are conditionally independent given $\cB_n^{K_0}$ for all $k',k,l\in \Z$ $(k<l)$ with $n=n'\in \N_0$. If $n\neq n'$ then they are independent.
		\item
		By definition $U_{k,n}$ and $U_{l,n'}$ are conditionally independent given $\cB_n^{K_0}$ for all $k,l\in \Z$  with $n=n'\in \N_0$. If $n\neq n'$ then they are independent.
		\item
		Analogously, the variables $W_{\{l,k\},n}$ and $W_{\{l',k'\},n'}$ are
		conditionally independent given $\cB_n^{K_0}$ for all $\{l,k\}\neq \{l',k'\}$ if $n=n'\in \N_0$. If $n\neq n'$ then they are independent for all choices of $\{l,k\}$ and $\{l',k'\}$.
	\end{enumerate}
	Note that in points 3 to 5 conditioning on $\cB_n^{K_0}$ serves the purpose of knowing what the partition $(S_{k,n})_{k\in \Z}$ in step $n$ look like.
\end{remark}
\begin{figure}[t]
	\centering 
	\includegraphics[width=80mm]{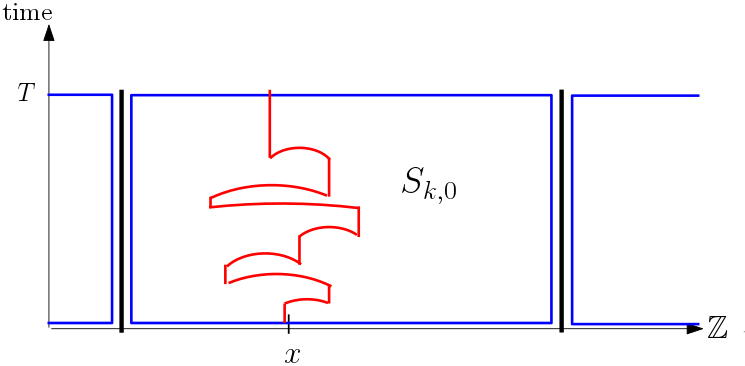}
	\caption{The thick black lines represent again $(0,K_0)$-cuts and the blue boxes are part of the resulting partition. Here we illustrated the case when $U_{k,0}=1$.}
	\label{fig:Surviving}
\end{figure}
We will again define a random graph $G_2$ with vertex set $\Z\times \N_0$ whose  edges are placed according to the following rules which are illustrated in \autoref{fig:ConfiningGraph}:
\begin{enumerate}
	\item If $U_{k,n}=1$ add oriented edges from $(k,n)$ to $(k-1,n+1)$, $(k,n+1)$ and $(k+1,n+1)$.
	\item If $X_{k,n}=1$ add edges as if $U_{k,n}=1$, $U_{k+1,n}=1$ and additionally an unoriented edge between $(k,n)$ and $(k+1,n)$.
	\item If $W_{\{k,l\},n}=1$ add an edge as if $U_{k,n}=1$, $U_{l,n}=1$ and additionally an unoriented edge from $(k,n)$ to $(l,n)$.
\end{enumerate}
If $U_{k,n}=1$ then the infection survives through the space-time box $S_{k,n}$ and it could possibly spread in at least one of the boxes $S_{m,n+1}$ for $m\in\{k-1,k,k+1\}$. If $X_{k,n}=1$ then it could possibly spread to its right neighbor in the time period $[nT,(n+1)T)$ via short edges. If $W_{\{k,l\},n}=1$ for any $l\neq k$ the infection could spread from $S_{k,n}$ to $S_{l,n}$ (or vice versa) via long edges. Note that in the latter two cases we add  oriented edges as in the first case because even if there is no connecting path contained within the respective space-time box 
the infection could still survive (and spread to the nearest neighbours) via open edges between the boxes.
\begin{figure}[t]
	\centering 
	\includegraphics[width=110mm]{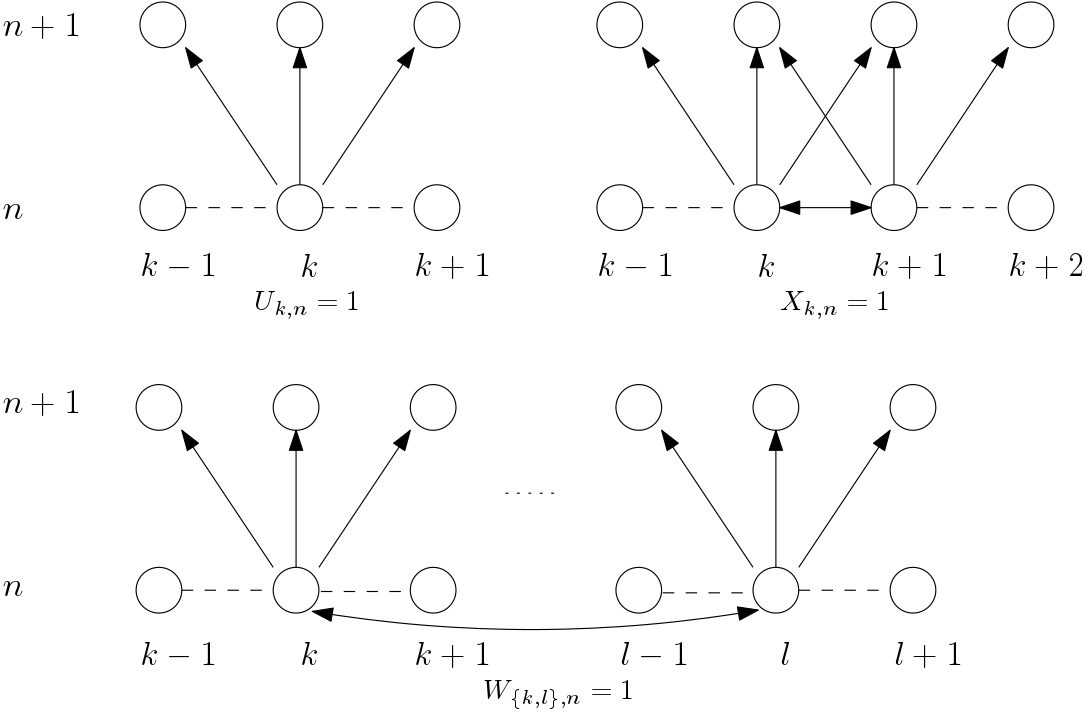}
	\caption{Visualization of the three rules regarding the construction of the graph $G_2$. Solid lines are present edges and dashed lines absent edges.}
	\label{fig:ConfiningGraph}
\end{figure}
Analogous to Definition \ref{validpathY} in the previous section we define a valid path as a path that traverses edges in the direction of their orientation, as well as a corresponding process. 
\begin{definition}
	\label{defn:Zprocess}
	Let $G_2$ be the above constructed random graph. Let $Z_0\subset \Z$ denote the indices of the boxes which contain the initially infected vertices $C \subset \Z$. We say that there exists a \textit{valid path} from $Z_0\times\{0\}$ to a point $(k,n)$  if there exists a sequence $k_0, k_1,\dots k_m=k$ with $k_0\in Z_0$ and $0=n_0\leq n_1\leq \dots\leq n_m=n$ such that there exist an edge in $G_2$ between $(x_k,n_k)$ and $(x_{k+1},n_{k+1})$ for all $k\in \{0,\dots,m-1\}$.
	
	We define the process $Z=(Z_n)_{n\geq 0}$ by letting for all $n\geq 1$ the random set $Z_n=Z_n(U,X,W)$ contain all points $x\in \Z$ for which there exists a valid path from $Z_0\times \{0\}$ to $(x,n)$ in $G_2$. 
\end{definition}
This following lemma and its proof is similar to Lemma~\ref{ExtinctionLemma} in the previous section.
\begin{lemma}\label{ExtinctionOfZImpliesOfC}
	Let $T>0$, $n\in\N_0$ and $C\subset V$. We choose $Z_0$ such that $k\in Z_0$ if $C\cap D_{k,0}\neq \emptyset$. If $x\in \widetilde{\bfC}^{C}_{nT}$ then there exists a $k\in \Z$ so that $x\in S_{k,n}$ and $k\in Z_n$. Thus, if $Z_n=\emptyset$ then $\widetilde{\bfC}^{C}_{nT}=\emptyset$, which in particular implies $\bfC^{C}_{nT}=\emptyset$.
\end{lemma}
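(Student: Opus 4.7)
The plan is to proceed by induction on $n$, mimicking the strategy of Lemma \ref{ExtinctionLemma} but at the level of boxes $D_{k,n}$ rather than individual vertices. The base case $n=0$ is immediate from the definition of $Z_0$. For the inductive step, suppose $x \in \widetilde{\bfC}^C_{nT}$ and let $k$ be the index with $x \in D_{k,n}$. By the definition of $\widetilde{\bfC}$ via the background $\bfB'$, there is a connecting path from some $y \in C$ at time $0$ to $(x,nT)$; let $y'$ be the vertex this path occupies at time $(n-1)T$ so that $y' \in \widetilde{\bfC}^C_{(n-1)T}$, and apply the induction hypothesis to locate $y' \in D_{k',n-1}$ with $k' \in Z_{n-1}$.

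The key step is to analyze the restriction of the connecting path to the slab $[(n-1)T, nT]$. Enumerate the boxes visited (in the slab-$(n-1)$ partition) as $D_{k_0,n-1}, D_{k_1,n-1}, \ldots, D_{k_M,n-1}$ with $k_0 = k'$ and $k_i \neq k_{i-1}$, and let $\ell := k_M$ so that $x \in D_{\ell, n-1}$. Each transition $k_{i-1} \to k_i$ is effected by a single open edge of $\bfB'$ at time $(n-1)T$. Using that each $D_{k,n-1}$ has width at least $2K_0$ by \eqref{MaxMinPar}, a short edge (length $\leq 2K_0$) can only reach an adjacent box, and traversing it requires the absence of an $(n-1,K_0)$-cut between them, i.e.\ $X_{\min(k_{i-1},k_i),n-1}=1$. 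A long edge transition instead forces $W_{\{k_{i-1},k_i\},n-1}=1$. In either case, rule 2 or rule 3 places an unoriented edge between $(k_{i-1},n-1)$ and $(k_i,n-1)$ in $G_2$, yielding a horizontal valid path from $(k',n-1)$ to $(\ell,n-1)$.

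To conclude, observe that $x \in D_{\ell,n-1} \cap D_{k,n}$; using the inclusions $D_{\ell,n-1} \subset D_\ell^{\max}$ and $D_{k,n}\subset D_k^{\max}$ from \eqref{MaxMinPar} together with the disjointness of the $M_j^{\text{mid}}$, this intersection can only be nonempty when $|k-\ell|\leq 1$. We therefore only need an oriented edge in $G_2$ from $(\ell,n-1)$ to $(k,n)$. If $\ell = k'$ (no box transitions happened in the slab), the connecting path remains in $S_{k',n-1}$ throughout $[(n-1)T,nT]$, hence $U_{k',n-1}=1$ and rule 1 supplies the required edge. If $\ell \neq k'$, the last transition into $D_{\ell,n-1}$ triggered rule 2 or rule 3, which by construction add edges as if $U_{\ell,n-1}=1$, again giving the desired oriented edge. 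Concatenation with the valid path to $(k',n-1)$ obtained by induction produces a valid path from $Z_0\times\{0\}$ to $(k,n)$, so $k \in Z_n$. The second assertion then follows since $\bfC^C_t \subset \widetilde{\bfC}^C_t$ for every $t\geq 0$.

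I expect the main obstacle to be the careful bookkeeping of the partition and the case analysis for the final box: one must verify that whichever mechanism (rule 1, 2, or 3) is responsible for $\ell$ being "activated" really produces the necessary oriented edge to any of its three possible neighbours at level $n$, and that the width bound $2K_0$ on $D_{k,n-1}$ is used correctly to rule out short edges skipping over a box.
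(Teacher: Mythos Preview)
Your proof is correct and follows the same approach as the paper's: track a connecting path through the time slabs, classify each box-boundary crossing at level $n-1$ as arising from a short edge (forcing an $X$-variable to be $1$, rule~2) or a long edge (forcing a $W$-variable to be $1$, rule~3), build a horizontal valid path at level $n-1$, and then use the oriented edges supplied by rules~1--3 together with the observation $|k-\ell|\le 1$ to reach the correct box at level $n$.

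One small correction in your final case analysis: the split should be on $M=0$ versus $M>0$, not on $\ell=k'$ versus $\ell\neq k'$. It can happen that the path leaves $D_{k',n-1}$ and later returns (so $M>0$ yet $\ell=k_M=k'$); then the path does \emph{not} stay in $S_{k',n-1}$ and $U_{k',n-1}$ need not equal $1$. However, in that situation the last transition into $D_{\ell,n-1}=D_{k',n-1}$ still triggers rule~2 or rule~3 at $(k',n-1)$, so the required oriented edges to $(k'\pm 1,n)$ and $(k',n)$ are present anyway and your conclusion stands. The paper's proof handles exactly this subcase explicitly when treating $k_m=k_{m+1}$ (``the infection must have spread to another vertex and the vertex $x_m$ got reinfected'').
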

\begin{proof}
	Recall the definition of $\widetilde{\bfC}$ and of a connecting path from \eqref{connectingpath}. 
	If $x\in \widetilde{\bfC}^{C}_{nT}$ then for some $x_0\in C$ there must exist a connecting path from $(x_0,0)$ to $(x,nT)$. For $m\in \{0,\dots,n\}$ we denote the position of the connecting path at time $mT$ by $x_m$ so that $x_m\in \widetilde{\bfC}^C_{mT}$ with  $x_n=x$. 
	Since the $(S_{k,n})_{(k,n)\in \Z\times\N_0}$ form a disjoint partition of $\Z\times [0,\infty)$ for every $x_m$ there exists a $k_m=k_m(x_m)$ such that $(x_{m},mT)\in S_{k_m,m}$. To prove the claim it again suffices to show that $x_{m}\in \widetilde{\bfC}^C_{mT}$ and $x_{m+1}\in \widetilde{\bfC}^C_{(m+1)T}$ imply that $k_{m+1}\in Z_{m+1}$ because we have $k_0 \in Z_0$	 by the definition of $Z_0$. 
	\begin{enumerate}
		\item Let us start with the case that $k_m\neq k_{m+1}$. Let $e_1,\dots e_r$ for some $r\in \N$ be the edges present in the connecting path from $(x_{m},mT)$ to $(x_{m+1}, (m+1)T)$ that 
		connect vertices in different space-time boxes.	Let $e_{m'}=\{x',y'\}$ and $t'\in [mT,(m+1)T)$ with $(x',t')\in  S_{k',m}$ and $(y',t')\in  S_{l',m}$ for some $k'<l'$.
		
		If $|x'-y'|>2K_0$ then $W_{\{k',l'\},m}=1$ since 
		the edge $e_{m'}$ must have been open at time $t' \in [mT,(m+1)T)$. Thus, by the third rule $l'\in Z_{m+1}$ if $k'\in Z_m$ (and vice versa).
		
		On the other hand if $|x'-y'|\leq 2K_0$ then $l'=k'+1$ because for any space box $|D_{k,n}|\geq 2K_0$.
		Hence, the boundary between $S_{k',m}$ and $S_{l',m}$ is no $(m,K_0)$-cut since this would prevent an infection to spread via the short edge $e_{m'}$. This implies that $X_{k',m}=1$. 
		Thus, by the second rule $l'\in Z_{m+1}$ if $k'\in Z_m$ (and vice versa).
		
		By applying a combination of the second and third rule to $e_1,\dots, e_r$ it follows that $k_{m+1}\in Z_{m+1}$.
		\item Now we consider the case that $k_m=k_{m+1}$. In this case the infection path is either contained in $S_{k,m}$, and this would imply that $U_{k,m}=1$, or it leaves the box and returns at a later time. By arguing as in point 1. we see that 	either there exists an $l\in \Z$ such that $W_{\{k,l\},m}=1$ or we have $X_{k,m}=1$ or $X_{k-1,m}=1$. 		Thus, $k_{m+1}\in Z_{m+1}$.\qedhere
	\end{enumerate}
\end{proof}
\vspace{-0.5ex}
We again find ourselves in the situation that the process  $Z$ is somewhat easier to handle than the original infection process, but it still hides a lot of dependency structure.
For the remainder of this section we choose $T:=\frac{1}{\gamma}$. 
By the definition of $\delta_{e}$ in Lemma \ref{EdgeBound} this yields\vspace{-1ex}
\begin{equation}\label{IndependenceOfGamma}
	\delta_{e}(\gamma,q,\gamma^{-1})=(1-q p_{e})e^{-q p_{e}v_{e}}\Big(1-q p_{e}\frac{1-e^{-v_{e}}}{1-e^{-q p_{e}v_{e}}}\Big),\vspace{-1ex}
\end{equation}
which is now independent of $\gamma$.

Next we will show that we can choose $r_0,K_0$ and $\gamma$ (or equivalently $T$) in such a way that the probabilities that any of the $X, W$ or $U$ variables are 1  are small. With this we will then show that we can choose $r_0,K_0$ and $\gamma^*$ in such a way that $Z_n$ goes almost surely extinct for all $\gamma<\gamma^*$.

\noindent
\textbf{Bound on the $X$ variables:} Let us recall that
\begin{align}\label{HelpBoundX1}
	\{X_{k,n}=1\}=\{\text{no $(n,K_0)$-cut lies in } M_k^{\text{mid}} \}=\bigcap_{z\in M_k^{\text{mid}}}\bigcup_{\substack{x\leq z < y,\\|x-y|\leq 2K_0}}\{w'_n(\{x,y\})=0\}.
\end{align}
The probability $\Pw(X_{k,n}=1)$ does not depend on $\gamma$ because of 
\eqref{IndependenceOfGamma}. This is important since later, in order to find a bound on $\Pw(U_{k,n}=1)$, we need to vary $\gamma$. 
Since $(w'_n(e))_{e\in \cE}$ is a family of independent Bernoulli random variables we can use 
these variables to define  a long range percolation model with probabilities $b_k:=(1-\delta_{\{0,k\}})$ for all $k\in \Z$.
Therefore, we see with \eqref{HelpBoundX1}  that in the terms of the long range percolation model it holds that
\begin{equation*}
	\{X_{k,n}=1\}\subset \bigcap_{z\in M_k^{\text{mid}}}\bigcup_{x\leq z < y}\{w'_n(\{x,y\})=0\}
	=\{\text{no cut point lies in } M_k^{\text{mid}} \}.
\end{equation*}
We set
\begin{equation}\label{VarBound1}
	\Pw(X_{k,n}=1)\leq \Pw\Big(\bigcap_{z\in M_k^{\text{mid}}}\bigcup_{x\leq z < y}\{w'_n(\{x,y\})=0\}\Big)=:\varepsilon_1(r_0).
\end{equation}
Note that the right hand side only depends on the size $r_0$ of $M_k^{\text{mid}}$ and not its exact location. 
By \eqref{TechnicalDelta2} we know that $\sum_{k\in \Z}^{\infty}kb_k=\sum_{k\in \Z}^{\infty}k(1-\delta_{\{0,k\}})<\infty$. Thus, by Theorem~\ref{CutPointTheo} there exist almost surely infinitely many cut points. But this means that 
\begin{equation}\label{ConvergenceOfVarBound1}
	\varepsilon_1(r_0)\to 0 \quad \text{ as } \quad r_0\to\infty.
\end{equation}
Note that this bound is independent of the choice of $K_0$. This is important since in the next step we derive a bound for the probability $\Pw(W_{\{k,l\},n}=1)$ by choosing $K_0$ accordingly. But the choice of $K_0$ will depend on the choice of $r_0$.

\textbf{Bound on the $W$ variables:} 
For these random variables describing transmission along long edges
we have $W_{\{l,k\},n}=W_{\{k,l\},n}$, which is why we only need to consider $k<l$. We see that
\begin{equation*}
	\{W_{\{k,l\},n}=1\}=\bigcup_{\substack{x\in D_{k,n}, y\in D_{l,n}\\|x-y|>2K_0}}\{w'_n(\{x,y\})=0\}\subset \bigcup_{\substack{x\in D^{\text{max}}_{k}, y\in D^{\text{max}}_{l}\\|x-y|>2K_0}}\{w'_n(\{x,y\})=0\}
\end{equation*}
with the sets $D^{\text{max}}_k$ and $D^{\text{max}}_l $ defined in \eqref{MaxMinPar}.
Note that the right hand side is independent of $\cB_n^{K_0}$ 
defined in \eqref{EdgeSigmaAlgebra}. Thus, for a given $r_0$  we can conclude that
\begin{equation}\label{VarBound2}
	\Pw(W_{\{k,l\},n}=1 |\cB_n^{K_0})\leq \sum_{\substack{x\in D^{\text{max}}_{k},y\in D^{\text{max}}_{l}\\|x-y|>2K_0}} (1-\delta_{\{x,y\}})=:a_{k,l}(K_0,r_0),
\end{equation}
with the right hand side independent of $\gamma$ due to \eqref{IndependenceOfGamma}. By subadditivity and \eqref{VarBound2} we get 
\begin{equation*}
	\Pw(\exists l\neq k: W_{\{k,l\},n}=1 |\cB^{K_0}_n)\leq \sum_{l\neq k}a_{k,l}(K_0,r_0).
\end{equation*}
Next we take a closer look at $D_k^{\text{max}}$ defined in \eqref{MaxMinPar}. We see that $D^{\text{max}}_{k}\cap D^{\text{max}}_{k+1}=M^{\text{mid}}_k$ and if $l>k+1$ then $D^{\text{max}}_{k}\cap D^{\text{max}}_{l}=\emptyset$. Thus, the term $1-\delta_{\{x,y\}}$ appears at most twice in the above sum for any particular edge $\{x,y\}$ with $x \in D^{\text{max}}_{k}$. 
Noting also that $|D_k^{\text{max}}|=2(r_0+K_0)$ and using symmetry 
and translation invariance we see that
\begin{align*}
	\sum_{l\neq k}a_{k,l}(K_0,r_0)=2\sum_{l>k}a_{k,l}(K_0,r_0)&\leq 4|D_k^{\text{max}}|\sum_{y> 2K_0}(1-\delta_{\{0,y\}})\\
	&= 8(K_0+r_0)\sum_{y> 2K_0}(1-\delta_{\{0,y\}}).
\end{align*}
Thus, in summary we obtain for any $k\in \Z$,
\begin{align}\label{VarBound3}
	\Pw(\exists l\neq k: W_{\{k,l\},n}=1 |\cB^{K_0}_n)\leq 8(K_0+r_0)\sum_{y> 2K_0}(1-\delta_{\{0,y\}})=:\varepsilon_2(K_0,r_0).
\end{align}
But since we know from \eqref{TechnicalDelta2} that $\sum_{y\in \N}y(1-\delta_{\{0,y\}})<\infty$. We also obtain that $\sum_{y> 2K_0}y(1-\delta_{\{0,y\}}) \to 0$ as $K_0\to \infty$, and thus it is not difficult to see that 
$\varepsilon_2(K_0,r_0)\to 0$ as $K_0\to \infty$ for every $r_0$. In addition,  if we choose $K_0=r_0$ then we see that also
\begin{equation}\label{K_0=r_0}
	\varepsilon_2(r_0,r_0)\to 0 \quad \text{ as } \quad r_0\to \infty
\end{equation}

\textbf{Bound on the $U$ variables:}
Recall that on every finite graph the classical contact process dies out. We denote by $\tau^{r_0,K_0}_{\text{ext}}$ the extinction time of a classical contact process with infection rate and recovery rate as the CPDLP $(\bfC,\bfB)$ on a complete graph with $2(K_0+r_0)$ vertices, where every vertex is initially infected. Since $|D_{k,n}|\leq 2(K_0+r_0)$ it holds that
\begin{equation}\label{VarBound4}
	\Pw(U_{k,n}=1|\cB^{K_0}_n)\leq \Pw(\tau^{r_0,K_0}_{\text{ext}}>\gamma^{-1})=:\varepsilon_3(K_0,r_0,\gamma).
\end{equation}
For every $\varepsilon>0$ we can choose $\gamma^*=\gamma^*(K_0,r_0)>0$ small enough such that $\Pw(\tau^{r_0,K_0}_{\text{ext}}>\gamma^{-1})<\varepsilon$ for all $\gamma<\gamma^*$, and thus in particular $\varepsilon_3(K_0,r_0,\gamma)\to 0$ as $\gamma\to 0$.

We observe now that $(X_{k,n})_{(k,n)\in\Z\times \N_0}$ are independent random variables and that for
any $n\in\N_0$ the random variables $(X_{k,n})_{k\in \Z}$ are measurable with respect to $\cB^{K_0}_n$. But the family of random variables $(U_{k,n})_{(k,n)\in \Z\times \N_0}$ are  only independent in time (for different $n$), but for a fixed $n$ in the spatial direction (for different $k$)  only independent conditionally on $\cB^{K_0}_n$, see Remark~\ref{PropertiesOfXWU}. The analogous statement holds for 
$\{W_{\{k,l\},n} :k,l \in \Z,k<l,n\in \N_0\}$.
Our next aim is therefore to construct independent upper bounds of the $W$ and $U$ variables, which are also independent of the $X$ variables.
\begin{proposition}\label{BoundsOnXWU}
	Let 	$a_{k,l}(K_0,r_0)$ 
	and $\varepsilon_3(K_0,r_0,\gamma)$ be as in \eqref{VarBound2} 
	and \eqref{VarBound4} and $r_0,K_0>0$ large enough as well as $\gamma>0$ small enough such that $a_{k,l}(K_0,r_0)$,	$\varepsilon_3(K_0,r_0,\gamma)<1$.
	Then there exist independent families
	\begin{align*}
		U':=\{U'_{k,n}:k\in \Z, n\in\N_0\} \quad \text{ and }\quad  W':=\{W'_{\{k,l\},n}:k,l\in \Z,k< l,n\in \N_0\}
	\end{align*}
	of independent Bernoulli random variables with $\Pw(W'_{\{k,l\},n}=1)=a_{k,l}(K_0,r_0)$ 
	and $\Pw(U'_{k,n}=1)=\varepsilon_3(K_0,r_0,\gamma)$ for all $k\neq l$ and all $n\in \N_0$ such that $W_{\{k,l\},n}\leq W'_{\{k,l\},n}$ and $U_{k,n}\leq U'_{k,n}$ almost surely  and such that they are independent of the family $(X_{k,n})_{(k,n)\in\Z\times \N_0}$. 
\end{proposition}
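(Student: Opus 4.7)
The plan is to construct $U'$ and $W'$ by a randomization (or "boosting") procedure analogous to the one in Lemma~\ref{BerComp}, exploiting the fact that the conditional bounds $\varepsilon_3(K_0,r_0,\gamma)$ and $a_{k,l}(K_0,r_0)$ are \emph{deterministic}, i.e.\ do not depend on the realization of $\cB_n^{K_0}$. Fix $n\in\N_0$. By Remark~\ref{PropertiesOfXWU}, conditionally on $\cB_n^{K_0}$ the collection $\{U_{k,n}\}_{k\in\Z}\cup\{W_{\{k,l\},n}\}_{k<l}$ consists of mutually conditionally independent Bernoulli random variables, and the $\cB_n^{K_0}$ for different $n$ are independent, so everything decouples across time steps. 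The idea is: on each time slice, we condition on $\cB_n^{K_0}$, enlarge each original variable using an independent auxiliary uniform so that its conditional parameter equals the \emph{deterministic} upper bound, and note that a constant conditional probability given $\cB_n^{K_0}$ automatically entails unconditional independence from $\cB_n^{K_0}$ (and hence from the $X$ family, which is $\cB_n^{K_0}$-measurable).

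Concretely, I would introduce an independent family $\{\xi^{U}_{k,n}, \xi^{W}_{\{k,l\},n}\}$ of i.i.d.\ uniform $[0,1]$ random variables, independent of all Poisson inputs driving $(\bfC,\bfB)$. Setting $p^{U}_{k,n}:=\Pw(U_{k,n}=1\mid\cB_n^{K_0})\leq \varepsilon_3$ and $p^{W}_{\{k,l\},n}:=\Pw(W_{\{k,l\},n}=1\mid\cB_n^{K_0})\leq a_{k,l}$, I define
\begin{equation*}
    U'_{k,n}:=U_{k,n}\vee\mathbf{1}\!\left\{\xi^{U}_{k,n}\leq \frac{\varepsilon_3-p^{U}_{k,n}}{1-p^{U}_{k,n}}\right\},\qquad
    W'_{\{k,l\},n}:=W_{\{k,l\},n}\vee\mathbf{1}\!\left\{\xi^{W}_{\{k,l\},n}\leq \frac{a_{k,l}-p^{W}_{\{k,l\},n}}{1-p^{W}_{\{k,l\},n}}\right\},
\end{equation*}
where the denominators are positive by the hypothesis $\varepsilon_3, a_{k,l}<1$. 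Pointwise domination is immediate, and a direct computation (exactly as in Lemma~\ref{BerComp}) gives $\Pw(U'_{k,n}=1\mid\cB_n^{K_0})=\varepsilon_3$ and $\Pw(W'_{\{k,l\},n}=1\mid\cB_n^{K_0})=a_{k,l}$ almost surely.

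It then remains to verify full mutual independence. Within a fixed time slice, conditional independence given $\cB_n^{K_0}$ of the boosted variables follows because the originals are conditionally independent and the $\xi$'s are independent of everything; since each conditional probability is a deterministic constant, the joint conditional law equals the product of Bernoullis with those constant parameters, which forces both unconditional independence across $(k,n)$ and $(\{k,l\},n)$ pairs and independence from the $\sigma$-algebra $\cB_n^{K_0}\supset\sigma(X_{k,n}:k\in\Z)$. Across different time slices everything is independent because $\cB_n^{K_0}$ and the recovery and infection Poisson points in disjoint intervals $[nT,(n+1)T)$ are independent, and by choosing the $\xi$'s independent across $n$ as well. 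The main (minor) obstacle is the bookkeeping in this last step: one must make sure that the randomization used to boost $W$ and $U$ at time $n$ is independent of the original randomness at \emph{all other} time steps, which is easily guaranteed by drawing the entire $\xi$-family up front independently of the graphical representation. Everything else reduces to the tower property applied to the constant conditional probabilities produced by the boosting.
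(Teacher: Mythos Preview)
Your proposal is correct and follows essentially the same approach as the paper: introduce auxiliary uniforms independent of everything, boost each $U_{k,n}$ and $W_{\{k,l\},n}$ so that its conditional probability given $\cB_n^{K_0}$ equals the deterministic upper bound, and then argue that constant conditional laws together with conditional independence yield full independence from each other and from $\cB_n^{K_0}$ (hence from the $X$'s). Your formula $U'_{k,n}=U_{k,n}\vee\mathbf{1}\{\xi^U_{k,n}\leq(\varepsilon_3-p^U_{k,n})/(1-p^U_{k,n})\}$ is exactly equivalent to the paper's construction, which sets $U'_{k,n}=0$ iff $U_{k,n}=0$ and $\chi^U_{k,n}\leq(1-\varepsilon_3)/\Pw(U_{k,n}=0\mid\cB_n^{K_0})$; the two differ only in whether one works with the probability of $0$ or of $1$ and on which side of the threshold the uniform falls.
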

\begin{proof}
	Recall that $\cB_n^{K_0}=\sigma\big(\big\{w'_n(\{x,y\}): |x-y|\leq 2K_0\big\}\big)$ from \eqref{EdgeSigmaAlgebra}. We will now explicitly construct the $U'$ and $W'$ variables. For that we define the random variables
	\begin{equation*}
		p^U_{k,n}:=\Pw(U_{k,n}=0|\cB^{K_0}_n) \quad \text{ and } \quad p^W_{\{k,l\},n}:=\Pw(W_{\{k,l\},n}=0|\cB^{K_0}_n)
	\end{equation*}
	for $l,k\in \Z$ with $k\neq l$ and $n\in \Z_0$. Now let 
	\begin{equation*}
		\{\chi^U_{k,n}: k\in \Z, n\in \N_0\}  \quad \text{ and } \quad  \{\chi^W_{\{k,l\},n}: k,l\in \Z, k\neq l, n\in \N_0 \}
	\end{equation*}
	be two independent families of uniform random variables on $[0,1]$, which are also independent of the $X$, $U$ and $W$ variables. Furthermore, we define the random variables
	\begin{equation*}
		s^{U}_{k,n}:=\frac{1-\varepsilon_3(K_0,r_0,\gamma)}{p^U_{k,n}}\quad \text{ and } \quad s^{W}_{\{k,l\},n}:=\frac{1-a_{k,l}(K_0,r_0)}{p^W_{\{k,l\},n}},
	\end{equation*}
	which are measurable with respect to $\cB_n^{K_0}$.
	Note that by \eqref{VarBound2} and \eqref{VarBound4} it follows that $p^W_{\{k,l\},n}\geq 1-a_{k,l}(K_0,r_0)>0$ and that $p^U_{k,n}\geq 1-\varepsilon_3(K_0,r_0,\gamma)>0$. This yields that $s^{W}_{\{k,l\},n}$ and $s^{U}_{k,n}$ have values in $[0,1]$. Now let $U'=\{U'_{k,n}:k\in\Z,n\in \N_0\}$ be a family of Bernoulli random variables such that $U'_{k,n}=0$ if and only if $U_{k,n}=0$ and $\chi^U_{k,n}\leq s^{U}_{k,n}$. By definition it is clear that $U'_{k,n}\geq U_{k,n}$ and we see that
	\begin{equation}\label{eq:U'Probability}
		\Pw(U'_{k,n}=0|\cB^{K_0}_n)=\Pw(U_{k,n}=0,\chi^U_{k,n}\leq s^{U}_{k,n}|\cB^{K_0}_n)
		= p^{U}_{k,n}s^{U}_{k,n}
		=1-\varepsilon_3(K_0,r_0,\gamma),
	\end{equation}
	where we used in the second equality conditional independence given $\cB_n^{K_0}$, which follows since $s^{U}_{k,n}$ is $\cB_n^{K_0}$ measurable and $\chi^U_{k,n}$ independent of $U_{k,n}$.
	Note that the right hand side is deterministic, and thus it follows that the variables $U'_{k,n}$ are independent of $\cB_n^{K_0}$ for all $k\in \Z$ and $n\in \N_0$.
	
	Analogously, we define the family $W'$ 
	by setting $W'_{\{k,l\},n}=0$ if and only if $W_{\{k,l\},n}=0$ and $\chi^W_{\{k,l\},n}\leq s^{W}_{\{k,l\},n}$ (and otherwise $W'_{\{k,l\},n}=1$). Analogously as in \eqref{eq:U'Probability} it follows that
	\begin{equation}\label{eq:W'Probability}
		\Pw(W'_{\{k,l\},n}=0|\cB^{K_0}_n)= p^W_{\{k,l\},n}s^W_{\{k,l\},n}=1-a_{k,l}(K_0,r_0),
	\end{equation}
	which implies that $W'_{k,n}$ is also independent of $\cB_n^{K_0}$ for all $k,l\in \Z$ with $l\neq k$ and $n\in \N_0$. By taking the expectation in \eqref{eq:U'Probability} and \eqref{eq:W'Probability} we get that
	\begin{equation}\label{eq:Marginals}
		\Pw(U'_{k,n}=0)=1-\varepsilon_3(K_0,r_0,\gamma)\quad \text{ and } \quad \Pw(W'_{\{k,l\},n}=0)=1-a_{k,l}(K_0,r_0),
	\end{equation}
	and therefore $U'_{k,n}$ and $W'_{\{k,l\},n}$ have the correct marginal distribution for all $k,l\in \Z$ with $l\neq k$ and $n\in \N_0$.
	
	It is left to show that these variables are two independent families of independent random variables. We already know by construction that the $U'$ and $W'$ of different time steps $n\neq n'$ are independent. Thus, it suffices to show independence of the variables in the same time step. Therefore, we fix some $n$ and omit the subscript $n$ in the following.
	
	Let $m_1,m_2\in \N_0$.  Let $u_1,\dots u_{m_1}$ and $w_1,\dots w_{m_2}$ be in $\{0,1\}$, and let 
	$k_1,\dots ,k_{m_1}$ be distinct integers as well as  $e_{1},\dots,e_{m_2}$ be distinct edges.
	We need to show that
	\begin{align*}
		&\Pw(U'_{k_1}=u_1,\dots ,U'_{k_{m_1}}=u_{m_1},W'_{e_1}=w_1,\dots, W'_{k_{m_2}}=w_{m_2})\\
		=&\Pw(U'_{k_1}=u_1)\dots \Pw(U'_{k_{m_1}}=u_{m_1})\Pw(W'_{e_1}=w_1)\dots \Pw(W'_{k_{m_2}}=w_{m_2}).
	\end{align*}
	Since we are considering Bernoulli random variables it suffices to consider $u_1=\dots=u_{m_1}=w_1=\dots =w_{m_2}=0$. 
	Now we see that
	\begin{align*}
		&\Pw(U'_{k_1}=0,\dots ,U'_{k_{m_1}}=0,W'_{e_1}=0,\dots ,W'_{k_{m_2}}=0)\\
		=&\E\Big[\Pw\Big(\bigcap_{i=1}^{m_1} \{U_{k_i}=0\}\cap \{\chi^U_{k_i}\leq s^{U}_{k_i}\}\cap \bigcap_{j=1}^{m_2} \{W_{e_j}=0\}\cap \{\chi^W_{e_j}\leq s^{W}_{e_j}\}\Big|\cB^{K_0}\Big)\Big]\\
		=&\E\Big[\Pw\Big(\bigcap_{i=1}^{m_1} \{U_{k_i}=0\}\cap \bigcap_{j=1}^{m_2} \{W_{e_j}=0\}\Big|\cB^{K_0}\Big)\prod_{i=1}^{m_1}\Pw(\chi^U_{k_i}\leq s^{U}_{k_i}|\cB^{K_0})\prod_{j=1}^{m_2}\Pw(\chi^W_{e_j}\leq s^{W}_{e_j}|\cB^{K_0})\Big],
	\end{align*}
	where we again used conditional independence analogously to \eqref{eq:U'Probability} and \eqref{eq:W'Probability}. Thus, we have that
	\begin{align*}
		&\Pw(U'_{k_1}=0,\dots ,U'_{k_{m_1}}=0,W'_{e_1}=0,\dots ,W'_{k_{m_2}}=0)\\
		=&\big(1-\varepsilon_3(K_0,r_0,\gamma)\big)^{m_1}\prod_{j=1}^{m_2}\big(1-a_{e_j}(K_0,r_0)\big)\\
		&\times\E\Big[\Pw\Big(\bigcap_{i=1}^{m_1} \{U_{k_i}=0\}\cap \bigcap_{j=1}^{m_2} \{W_{e_j}=0\}\Big|\cB^{K_0}\Big)  \prod_{i=1}^{m_1}\frac{1}{p^U_{k_i}}\prod_{j=1}^{m_2}\frac{1}{p^{W}_{e_j}}\Big]\\
		=&\big(1-\varepsilon_3(K_0,r_0,\gamma)\big)^{m_1}\prod_{j=1}^{m_2}\big(1-a_{e_j}(K_0,r_0)\big)\\
		=&\Pw(U'_{k_1}=0)\cdots \Pw(U'_{k_{m_1}}=0)\Pw(W'_{e_1}=0)\cdots\Pw(W'_{k_{m_2}}=0),
	\end{align*}
	where we have used in the second to last equality that the $U$ and $W$ variables are conditionally independent given $\cB^{K_0}$ and \eqref{eq:Marginals} in the last equality.
	Note that since the families $U'$ and $W'$  are independent of $\cB^{K_0}$ it follows immediately that they are also independent of the $X$ variables since those are measurable with respect to $\cB^{K_0}$. This concludes the proof
\end{proof}
Now we define a process $(Z'_n)_{n\in \Z}$ 
as in Definition \ref{defn:Zprocess} but as a function of
the random variables $X$, $U'$ and $W'$, which we obtained in Proposition~\ref{BoundsOnXWU}, instead of $X$, $U$ and $W$ used to define $Z$. Due to the monotonicity in the definition of those processes it follows that $Z_n\subset Z'_n$ for all $n\in\N_0$. Thus if $(Z'_n)_{n\in \Z}$ goes extinct almost surely, then the same follows for $(Z_n)_{n\in \Z}$. 
\begin{lemma}\label{ExtinctionSlowSpeed}
	If $\E[|Z'_1||Z'_0=\{0\}]<1$, then $Z'$ dies out almost surely for any finite $A\subset V$ as initial state.
\end{lemma}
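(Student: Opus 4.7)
The argument is the standard subcritical first‑moment comparison, parallel to Lemma~\ref{Immunization} in the previous section. Write $Z'_n(A)$ for the process with initial state $A$, and set $\mu := \E[|Z'_1| \mid Z'_0 = \{0\}]$, which is assumed to be strictly less than $1$. The plan is to show $\E[|Z'_n|] \leq \mu^n |A|$ and then deduce almost sure extinction by combining Markov's inequality with an absorption argument.

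First, I note that the process is absorbed at $\emptyset$: every valid path in $G_2$ to $(x, n+1)$ must pass through some vertex $(k, n)$ with $k \in Z'_n$, because vertical edges in $G_2$ increase the time index by exactly one. Hence the events $\{Z'_n \neq \emptyset\}$ are non-increasing in $n$. Second, by the union structure of reachability one has the additivity $Z'_n(A) = \bigcup_{k \in A} Z'_n(\{k\})$ and therefore $|Z'_n(A)| \leq \sum_{k \in A} |Z'_n(\{k\})|$. Let $\cF_n := \sigma(X_{\cdot,m}, U'_{\cdot,m}, W'_{\cdot,m} : m<n)$; then $Z'_n$ is $\cF_n$‑measurable while the step‑$n$ variables are independent of $\cF_n$ by Proposition~\ref{BoundsOnXWU} and Remark~\ref{PropertiesOfXWU}. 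Combining the single‑step additivity with translation invariance of the laws of $X$, $U'$ and $W'$ will yield $\E[|Z'_{n+1}(\{k\})| \mid \cF_n] = \mu$ for every $k \in \Z$, and therefore
\[
\E[|Z'_{n+1}| \mid \cF_n] \leq \mu\, |Z'_n|.
\]
Iterating and taking expectation gives $\E[|Z'_n|] \leq \mu^n |A|$, so by Markov's inequality $\Pw(Z'_n \neq \emptyset) \leq \mu^n |A| \to 0$, which combined with the monotonicity from the absorption step yields $\Pw(Z'_n \neq \emptyset \text{ for all } n) = \lim_n \Pw(Z'_n \neq \emptyset) = 0$.

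The only delicate point (the main obstacle) is verifying $\E[|Z'_{n+1}(\{k\})| \mid \cF_n] = \mu$ for every $k$. This requires two observations: that the joint law of $(X_{\cdot,n}, U'_{\cdot,n}, W'_{\cdot,n})$ agrees with that of $(X_{\cdot,0}, U'_{\cdot,0}, W'_{\cdot,0})$ (from independence across $n$ in Proposition~\ref{BoundsOnXWU}), and that this joint law is invariant under spatial translation, since $\Pw(X_{k,n}=1)$ depends only on $r_0$, $\Pw(U'_{k,n}=1)=\varepsilon_3(K_0,r_0,\gamma)$ is independent of $k$, and $\Pw(W'_{\{k,l\},n}=1)=a_{k,l}(K_0,r_0)$ depends only on $|k-l|$ by the translation invariance of $D_k^{\text{max}}$. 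Given these, the descendant set $Z'_{n+1}(\{k\})$ is simply a spatial shift of $Z'_1(\{0\})$ and hence has expected size $\mu$. The remainder of the argument is then a verbatim adaptation of Lemma~\ref{Immunization}.
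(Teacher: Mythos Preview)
Your proposal is correct and is precisely the approach the paper has in mind: the paper's own proof simply reads ``Analogously to Lemma~\ref{Immunization}, respectively to \cite[Lemma 3.7]{linker2019contact}'', and you have spelled out exactly that standard first-moment/subadditivity argument, correctly identifying the needed ingredients (independence of the step-$n$ variables from $\cF_n$ via Proposition~\ref{BoundsOnXWU}, spatial translation invariance of the marginals, and absorption at $\emptyset$).
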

\begin{proof}
	Analogously to Lemma~\ref{Immunization}, respectively to \cite[Lemma 3.7]{linker2019contact}.
\end{proof}
Now we are ready to show Theorem~\ref{AsymptoticSlowSpeedThm}, which states that for any $q\in (0,1)$, $C\subset V$ non-empty and finite, and $\lambda>0$ there exists $\gamma^*>0 $ such that $\bfC^{C}$ dies out almost surely for all $\gamma\leq\gamma^*$, i.e.~$\theta(\lambda,\gamma,q,C)=0$ for all $\gamma\leq\gamma^*$. We will also show that this implies that $\lambda_c(\gamma,q)\to \infty$ as $\gamma\to 0$.

\begin{proof}[Proof of Theorem~\ref{AsymptoticSlowSpeedThm}]
	The proof strategy is similar to that of the proof of Theorem~\ref{ImmunThm}, and it again
	suffices to consider $Z'_0=\{0\}$ since the general case follows analogously as in the proof of Theorem~\ref{ImmunThm}. 
	We see that $|Z'_1|\leq 3|Z|$, where $Z$ is the connected component containing the origin $0$ of a long range percolation model (see Section \ref{LongRangePercolationModel}) with probabilities given by  
	\begin{equation*}
		b_{\{k,l\}}=\Pw(W'_{\{k,l\},n}=1) \,\, \text{ and }  \,\,
		b_{\{k,k+1\}}=\Pw(\{W'_{\{k,k+1\},n}=1\}\cup\{X_{k,n}=1\})
	\end{equation*}
	for all  $k,l\in \Z$ with $|k-l| \geq 2$. Note that the constant $3$ comes from the fact that
	any vertex in $Z$, which is connected to the origin at time $0$ via unoriented edges, connects to $3$ vertices at time $1$ via oriented edges,
	see Figure~\ref{fig:ConfiningGraph}. We see that we can again split up the expectation such that
	\begin{align}\label{SplitUp2}
		\begin{aligned}
			\E[|Z|]=&\E\big[\1_{\{X_{1,0}=1\}\cup\{X_{-1,0}=1 \}\cup\{\exists j\in \Z: W'_{\{0,j\},0}=1\}}|Z|\big]\\
			&+\underbrace{\E\big[\1_{\{X_{1,0}=0\}\cap \{X_{-1,0}=0 \}\cap \bigcap_{j\in \Z}\{W'_{\{0,j\},0}=0\}}U'_{0,0}\big]}_{\leq \E[U'_{0,0}]}.
		\end{aligned}
	\end{align}	
	We also know that by \eqref{VarBound1} and \eqref{VarBound2} combined with Proposition \ref{BoundsOnXWU}
	\begin{equation*}
		\Pw(X_{k,n}=1) \leq \varepsilon_1(r_0) \,\, \text{ and }  \,\, \Pw(W'_{\{k,l\},n}=1)\leq a_{k,l}(K_0,r_0)
	\end{equation*}
	so that the $b_{\{k,l\}}$ have bounds that are independent of the choice of $\gamma$, namely, we have for any fixed $k$ that
	\begin{equation*}
		\sum_{l\neq k}b_{\{k,l\}} 
		\leq 2\varepsilon_1(r_0)+ \sum_{l\neq k} a_{k,l}(K_0,r_0).
	\end{equation*}
	From here onwards for the remainder of the proof we choose $K_0=r_0$.
	Now by  \eqref{ConvergenceOfVarBound1} and \eqref{K_0=r_0} it follows that
	\begin{align*}
		\sum_{l\neq k}b_{\{k,l\}}
		\leq 2\varepsilon_1(r_0)+ \sum_{l\neq k} a_{k,l}(r_0,r_0)\to 0
	\end{align*}
	as $r_0\to \infty$. Hence, there exists a constant  $R_1>0$ such that $\sum_{l\neq k}b_{\{k,l\}}
	<1$ for all $r_0\geq R_1$. Thus, by Proposition~\ref{lowerBound&integrability} we know that $|Z|$ is integrable. We add $r_0$ as an index, i.e.~$Z(r_0)$. We can show analogously as in the proof of Theorem~\ref{ImmunThm} that for every $\varepsilon>0$ there exists an $M=M(\varepsilon, R_1)$ such that
	\begin{equation*}
		\E[|Z(r_0)|\1_{\{|Z(r_0)|>M\}}]<\frac{\varepsilon}{3}
	\end{equation*}
	for all $r_0\geq R_1$. Thus, we can conclude that
	\begin{align*}
		&\E[\1_{\{X_{1,0}=1\}\cup\{X_{-1,0}=1 \}\cup\{\exists j\in \Z: W'_{\{0,j\},0}=1\}}|Z(r_0)|]\\
		\leq& \E[\1_{\{|Z(r_0)|>M\}}|Z(r_0)|]+M\big(
		\Pw(X_{1,0}=1)+\Pw(X_{-1,0}=1 )+\sum_{j\in \Z}\Pw(W'_{\{0,j\},0}=1)\big)\\
		\leq& \frac{\varepsilon}{3}+M\Big(2\varepsilon_1(r_0)+ \sum_{l\neq k} a_{k,l}(r_0,r_0)\Big).
	\end{align*}
	Next we again use \eqref{ConvergenceOfVarBound1} and \eqref{K_0=r_0} to see that there must exist a constant $R_2>R_1$ such that $M\big(2\varepsilon_1(r_0)+ \sum_{l\neq k} a_{k,l}(r_0,r_0)\big) <\frac{\varepsilon}{3}$ for all $r_0>R_2$. By \eqref{VarBound4} we can choose $\gamma^*>0$ small enough such that $\E[U_{0,0}]<\frac{\varepsilon}{3}$ for all $\gamma<\gamma^*$. Then it follows with \eqref{SplitUp2} that $\E[|Z|]<3\varepsilon$. Thus, if we choose $\varepsilon<\frac{1}{3}$ we see that
	\begin{align*}
		\E[|Z_1'|]\leq 3\E[|Z|]<1.
	\end{align*}
	By Lemma~\ref{ExtinctionSlowSpeed} it follows that $(Z'_n)_{n\in \N}$ goes extinct almost surely, which implies the same for $(Z_n)_{n\in \N}$ 	since $Z_n\subset Z'_n$ for all $n$ almost surely. Then by Lemma~\ref{ExtinctionOfZImpliesOfC} it follows that $\bfC^{\{x\}}$ goes extinct almost surely and so also $\bfC^{C}$ 	for all finite $C\subset \Z$ and all $\gamma<\gamma^*$.
	In formulas this means that $\theta(\lambda,\gamma,q,C)=0$ for all $\gamma<\gamma^*$.
	
	Finally this implies that $\lim_{\gamma\to 0}\lambda_c(\gamma,q)=\infty$ since otherwise there would exist a $\lambda<\infty$ and a sequence $\gamma_n \rightarrow 0$ such that $\sup_n \lambda_c(\gamma_n, q)<\lambda$. But this would imply that for this $\lambda$ fixed $\theta(\lambda,\gamma_n,q,C)>0$ for all $0<|C|<\infty$ in contradiction to what we just proved. 
\end{proof}
\noindent
\textbf{Acknowledgement.}
We thank Amitai Linker for very helpful discussions at the start of this project, and also Moritz Wemheuer for very useful comments and suggestions. Furthermore, we would like to thank the anonymous referees for carefully reading the manuscript and many suggestions for improvement. MS was partially supported from the LOEWE programme of the state of Hessen (CMMS) in the course of this project.



\begin{thebibliography}{}
	\bibitem[AS10]{athreya2010survival}Athreya, S. and Swart, J. Survival of contact processes on the hierarchical group. In: {\em Probab. Theory Rel. Fields}. \textbf{147}, pp. 529-563 (2010).
	
	\bibitem[BG81]{bramson1981note}Bramson, M. and Gray, L. A note on the survival of the long-range contact process. In: {\em Ann. Probab.} \textbf{9}, pp. 885-890 (1981).
	
	\bibitem[Bro07]{broman2007stochastic}Broman, E. Stochastic domination for a hidden Markov chain with applications to the contact process in a randomly evolving environment. In: {\em Ann. Probab.} \textbf{35}, pp. 2263-2293 (2007).
	
	\bibitem[Can15]{can2015contact}Can, V. Contact process on one-dimensional long-range percolation. In: {\em Electron. Commun. Probab.} \textbf{20}, pp. 1-11 (2015). 
	
	\bibitem[DM91]{durrett1991complete}Durrett, R. and Møller, A. Complete convergence theorem for a competition model. {\em Probab. Theory Rel. Fields}. \textbf{88}, pp. 121-136 (1991).
	
	\bibitem[Gin+06]{ginelli2006contact}Ginelli, F., Hinrichsen, H., Livi, R., Mukamel, D. and Torcini, A. Contact processes with long range interactions. In: {\em J. Stat. Mech.} \textbf{2006(08)}, P08008 (2006)
	
	\bibitem[GL21]{gomes2021long}Gomes, P. and de Lima, B. Long-range contact process and percolation on a random lattice. In: {\em Stoch. Proc. Appl.} \textbf{153} pp. 21-38 (2022).
	
	\bibitem[Gri99]{grimmett1999percolation}Grimmett, G. Percolation. Springer, 1999.
	
	\bibitem[Har74]{harris1974contact}Harris, T. Contact interactions on a lattice. In: {\em Ann. Probab.} \textbf{2}, pp. 969-988 (1974).
	
	\bibitem[Hil+21]{hilario2021results}Hil\'ario, M., Ungaretti, D., Valesin, D., and Vares, M. E. Results on the contact process with dynamic edges or under renewals. In: {\em Electron. J. Probab.} \textbf{27} pp. 1-31. (2022).
	
	\bibitem[HD20]{huang2020contact} Huang, X. and Durrett, R. The contact process on random graphs and Galton Watson trees. In: {\em ALEA Lat. Am. J. Probab. Math. Stat.} \textbf{17(1)}, pp. 159–182 (2020).
	
	\bibitem[JLM22]{jacob2022contact} Jacob, E., Linker, A. and Mörters, P. The contact process on dynamical scale-free networks. In: {\em ArXiv:2206.01073} (2022).
	
	\bibitem[Kal06]{kallenberg2006foundations}Kallenberg, O. Foundations of modern probability. Springer, 2006.
	
	\bibitem[Lig12]{liggett2012interacting}Liggett, T. Interacting particle systems. Springer, 2012.
	
	\bibitem[Lig13]{liggett2013stochastic}Liggett, T. Stochastic interacting systems: contact, voter and exclusion processes. Springer, 2013.
	
	\bibitem[LR20]{linker2019contact}Linker, A. and Remenik, D. The contact process with dynamic edges on $\mathbb{Z}$. In: {\em Electron. J. Probab.} \textbf{25}, pp. 1-21 (2020).
	
	\bibitem[MS16]{menard2015percolation}Ménard, L. and Singh, A. Percolation by cumulative merging and phase transition for the contact process on random graphs. In: {\em Ann. Sci. École Norm. Sup}. \textbf{49}, pp. 1189-1238 (2016).
	
	\bibitem[Ped71]{pedler1971occupation}Pedler, P. Occupation times for two state Markov chains. In: {\em J. Appl. Prob.} \textbf{8}, pp. 381-390 (1971).
	
	\bibitem[Rem08]{remenik2008contact}Remenik, D. The contact process in a dynamic random environment. In: {\em Ann. Appl. Probab.} \textbf{18}, pp. 2392-2420 (2008).
	
	\bibitem[Sch83]{schulman1983long}Schulman, L. Long range percolation in one dimension. In: {\em J. Phys. A}. \textbf{16}, pp. L639-L641 (1983).
	
	\bibitem[SS22]{seiler2022contact}Seiler, M. and Sturm, A. Contact process in an evolving random environment. In: {\em Electron. J. Probab.} \textbf{28}, pp. 1-61 (2023).  
	
	\bibitem[Spi77]{spitzer1977stochastic}Spitzer, F. Stochastic time evolution of one dimensional infinite particle systems. In: {\em Bull. Am. Math. Soc.} \textbf{83}, pp. 880-890 (1977).
	
	\bibitem[SW08]{steif2007critical}Steif, J. and Warfheimer, M. The critical contact process in a randomly evolving environment dies out. In: {\em ALEA, Lat. Am. J. Probab. Math. Stat.} \textbf{4}, pp. 337-357 (2008).
	
	\bibitem[SS14]{sturm2014subcritical}Sturm, A. and Swart, J. Subcritical contact processes seen from a typical infected site. In: {\em Electron. J. Probab.} \textbf{19}, pp. 1-46 (2014).
	
	\bibitem[Swa09]{swart2009contact}Swart, J. The contact process seen from a typical infected site. In: {\em J. Theoret. Probab}. \textbf{22}, pp. 711-740 (2009).
	
	\bibitem[Swa17]{swart2017course}Swart, J. A course in interacting particle systems. In:  {\em ArXiv:1709.10007} (2017).
	
	\bibitem[Swa18]{swart2018simple}Swart, J. A simple proof of exponential decay of subcritical contact processes. In: {\em Probab. Theory Rel. Fields}. \textbf{170}, pp. 1-9 (2018).
	
	\bibitem[Van16]{van2016random} Van der Hofstad, R. Random graphs and complex networks. Cambridge University Press, 2016.
\end{thebibliography}
\end{document}